\newcommand{\rats}{\mbox{$\mathbb Q$}}
\newcommand{\field}{\mbox{$\mathbb F$}}
\newcommand{\comps}{\mbox{$\mathbb C$}}
\newcommand{\nats}{\mbox{$\mathbb N$}}
\newcommand{\ints}{\mbox{$\mathbb Z$}}
\newcommand{\charac}{{\mathop{\mathrm{char}}\nolimits}}
\newcommand{\comment}[1]{}
\def\squarebox#1{\hbox to #1{\hfill\vbox to #1{\vfill}}}
\def\qed{\hspace*{\fill}
        \vbox{\hrule\hbox{\vrule\squarebox{.667em}\vrule}\hrule}\smallskip}
\newenvironment{proof}{\begin{trivlist}
  \item[\hspace{\labelsep}{\em\noindent Proof.~}]
  }{\qed\end{trivlist}}
\newenvironment{psmallmatrix}
  {\left(\begin{smallmatrix}}
  {\end{smallmatrix}\right)}
\newtheorem{lemma}{Lemma}[section]
\newtheorem{theorem}[lemma]{Theorem}
\newtheorem{corollary}[lemma]{Corollary}
\newtheorem{proposition}[lemma]{Proposition}
\newtheorem{claim}[lemma]{Claim}
\newtheorem{observation}[lemma]{Observation}
\newtheorem{definition}[lemma]{Definition}
\newtheorem{question}[lemma]{Question}
\def\squareforqed{\hbox{\rlap{$\sqcap$}$\sqcup$}}
\def\qed{\ifmmode\squareforqed\else{\unskip\nobreak\hfil
\penalty50\hskip1em\null\nobreak\hfil\squareforqed
\parfillskip=0pt\finalhyphendemerits=0\endgraf}\fi}
\newlength{\tablength}
\newlength{\spacelength}
\newcommand{\tabstar}{\hspace*{\tablength}}
\newcommand{\spacestar}{\hspace*{\spacelength}}
\def\obeytabs{\catcode`\^^I=\active}
{\obeytabs\global\let^^I=\tabstar}
{\obeyspaces\global\let =\spacestar}
\newenvironment{display}{\begingroup\obeylines\obeyspaces\obeytabs}{\endgroup}
\newenvironment{prog}{\begin{display}\parskip0pt\sf}{\end{display}}
\title{On a special presentation of matrix algebras}
\author{
{\sl Geir Agnarsson}
\thanks{Department of Mathematical Sciences,
George Mason University,
MS 3F2,
4400 University Drive,
Fairfax, VA -- 22030, USA,
{\tt geir@math.gmu.edu}}
\and
{\sl Samuel S.~Mendelson}
\thanks{The Naval Surface Warfare Center,
Dahlgren Division,
6149 Suite 203, 
Welsh Rd, 
Dahlgren, VA -- 22448, USA,
{\tt samuel.mendelson@navy.mil}}
}
\date{}
\begin{document}

\maketitle

\begin{abstract}
Recognizing when a ring is a complete matrix ring is of significant 
importance in algebra.  It is well-known folklore that a ring $R$ is a 
complete $n\times n$ matrix ring, so $R\cong M_{n}(S)$ for some ring $S$, 
if and only if it contains a set of $n\times n$ matrix 
units $\{e_{ij}\}_{i,j=1}^n$. A more recent and less known result states 
that a ring $R$ is a complete $(m+n)\times(m+n)$ matrix ring
if and only if, $R$ contains three elements, $a$, $b$, 
and $f$, satisfying the two relations $af^m+f^nb=1$ and $f^{m+n}=0$. 
In many instances the two 
elements $a$ and $b$ can be replaced by appropriate powers $a^i$ and $a^j$
of a single element $a$ respectively. In general very little
is known about the structure of the ring $S$. In this article we study
in depth the case $m=n=1$ when $R\cong M_2(S)$. More specifically we
study the universal algebra over a commutative ring $A$ with elements
$x$ and $y$ that satisfy the relations $x^iy+yx^j=1$ and $y^2=0$.  
We describe completely the structure of these $A$-algebras and their 
underlying rings when $\gcd(i,j)=1$. Finally we obtain results that
fully determine when there are surjections onto $M_2(\field)$ when
$\field$ is a base field $\rats$ or $\ints_p$ for a prime number $p$.

\vspace{3 mm}

\noindent {\bf 2010 MSC:}
15B33,  % Matrices over special rings (quaternions, finite fields, etc.)
16S15,  % Finite generation, finite presentability, normal forms 
        %(diamond lemma, term-rewriting)
16S50.  % Endomorphism rings; matrix rings [See also 15-XX]

\vspace{2 mm}

\noindent {\bf Keywords:}
matrix ring,
matrix algebra.
finite presentation.
\end{abstract}

\section{Introduction}
\label{sec:intro}

\subsection{History and motivation}
\label{subsec:hist}

Matrix rings and algebras have been studied for a long time. 
For examples of their importance and study see~\cite[Chapter 7]{lambook} 
and~\cite[Chapter 1, 6]{Rowen}. We say that a unital ring $R$ is a 
{\em complete $n\times n$ matrix ring over a ring $S$} if $R\cong M_n(S)$.
Recognizing a complete matrix ring, or algebra, is however not obvious. 
Recall that {\em $n\times n$ matrix units} in a unital ring $R$ is
a set of elements $\{ e_{i\/j} : 1\leq i,j\leq n\}\subseteq R$ that satisfy
\[
\sum_{i=1}^n e_{i\/i}=1_R \mbox{ and }
e_{i\/j}e_{k\/\ell}=\delta_{j\/k}e_{i\/\ell},
\]
where $\delta_{j\/\ell}$ is the Kronecker delta 
function~\cite[Chapter 1]{Rowen}.  
The most well-known element-wise characterization 
of a complete $n\times n$ matrix ring is given by the following 
folklore theorem~\cite[Prop.~11.3, p.~22]{Rowen}. 
\begin{theorem}
\label{thm:matrix-units}
A unital ring $R$ is a complete $n\times n$ matrix ring over some ring $S$,
that is $R\cong M_n(S)$, if and only if it contains a set of 
$n\times n$ matrix units.
\end{theorem}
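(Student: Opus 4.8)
The plan is to prove the two implications separately, with the forward direction essentially immediate and the reverse direction carrying all of the content. For the forward implication, suppose $R\cong M_n(S)$. I would transport the standard matrix units of $M_n(S)$ — the matrices $E_{ij}$ having $1_S$ in position $(i,j)$ and zeros elsewhere — through the given isomorphism; since the relations $\sum_i E_{ii}=1$ and $E_{ij}E_{k\ell}=\delta_{jk}E_{i\ell}$ are preserved by any ring isomorphism, their images form a set of $n\times n$ matrix units in $R$.

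For the reverse implication, assume $R$ contains matrix units $\{e_{ij}\}_{i,j=1}^n$. First I would single out the \emph{corner ring} $S:=e_{11}Re_{11}$; using $e_{11}e_{11}=e_{11}$ one checks that $S$ is closed under the operations of $R$ and is a unital ring with identity element $e_{11}$ (not $1_R$). Next, define a map $\phi:R\to M_n(S)$ by $\phi(r)=(e_{1i}\,r\,e_{j1})_{i,j=1}^n$, observing first that each entry lies in $S$ since $e_{1i}\,r\,e_{j1}=e_{11}(e_{1i}\,r\,e_{j1})e_{11}$ because $e_{11}e_{1i}=e_{1i}$ and $e_{j1}e_{11}=e_{j1}$. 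The heart of the argument is then a sequence of routine verifications, all powered by the resolution of the identity $1_R=\sum_{k=1}^n e_{kk}$ together with $e_{kk}=e_{k1}e_{1k}$ and the rule $e_{ab}e_{cd}=\delta_{bc}e_{ad}$. Additivity of $\phi$ is trivial. For multiplicativity, inserting $\sum_k e_{kk}$ between $r$ and $r'$ gives
\[
e_{1i}(rr')e_{j1}=e_{1i}\,r\,\Bigl(\sum_{k=1}^n e_{kk}\Bigr)r'\,e_{j1}=\sum_{k=1}^n (e_{1i}\,r\,e_{k1})(e_{1k}\,r'\,e_{j1}),
\]
which is exactly the $(i,j)$-entry of $\phi(r)\phi(r')$. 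The matrix $\phi(1_R)$ has $(i,j)$-entry $e_{1i}e_{j1}=\delta_{ij}e_{11}$, the identity of $M_n(S)$. Injectivity follows since $\phi(r)=0$ forces $r=\bigl(\sum_i e_{ii}\bigr)r\bigl(\sum_j e_{jj}\bigr)=\sum_{i,j}e_{i1}(e_{1i}\,r\,e_{j1})e_{1j}=0$. Surjectivity follows because an arbitrary matrix $(s_{ij})\in M_n(S)$ is the image of $r:=\sum_{i,j}e_{i1}\,s_{ij}\,e_{1j}$, as the computation of $e_{1k}\,r\,e_{\ell 1}$ via the multiplication rule shows (all cross terms vanish and $e_{11}s_{k\ell}e_{11}=s_{k\ell}$). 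Hence $\phi$ is a ring isomorphism and $R\cong M_n(S)$.

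I do not anticipate a genuine obstacle: the only points requiring care are confirming that $S=e_{11}Re_{11}$ really is a unital ring with identity $e_{11}$, and keeping the index bookkeeping straight in the multiplicativity and surjectivity computations. Everything else is mechanical once the identity $\sum_k e_{kk}=1_R$ is exploited to split and reassemble elements of $R$.
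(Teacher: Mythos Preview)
Your proof is correct and is essentially the standard argument for this folklore result. Note, however, that the paper does not actually supply its own proof of this theorem: it is quoted as well-known with a reference to Rowen, so there is no in-paper argument to compare against beyond the remark following the statement.

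That remark does indicate a slightly different realization of the ring $S$: the paper takes
\[
S=\Bigl\{\sum_{i=1}^n e_{i1}xe_{1i}\ :\ x\in R\Bigr\},
\]
i.e.\ the ``diagonal'' copy of the corner ring sitting inside $R$, whereas you work directly with the corner $e_{11}Re_{11}$. These two subrings are canonically isomorphic via $e_{11}xe_{11}\mapsto\sum_i e_{i1}xe_{1i}$, so the discrepancy is purely cosmetic; your choice is arguably cleaner for verifying the map $\phi$, while the paper's description has the feature that its $S$ contains $1_R$ rather than $e_{11}$ as its identity.
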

The ring $S$ in the above Theorem~\ref{thm:matrix-units} is completely
determined by
\[
S = \left\{\sum_{i=1}^n e_{i\/1}xe_{1\/i} ; x\in R\right\}.
\]
However, these matrix units can be difficult to find and tedious to verify.
In 1990, Chatters 
in~\cite{quaternions} posed the following question: Let $\mathbb{H}$ be 
the integer quaternions and 
$T(n)=\begin{psmallmatrix}\mathbb{H} & n\mathbb{H}\\ 
\mathbb{H} & \mathbb{H} \end{psmallmatrix}$.  
For which, if any, values of $n$ is the tiled matrix 
ring $T(n)$ a complete matrix ring? At first glance, $T(n)$ does not 
appear to be a complete matrix ring. However, using properties of 
$\mathbb{H}$ and finding suitable matrix units, it turns out 
that $T(n)\cong M_2(S)$ for some $S$ (not necessarily unique) for 
odd values of $n$~\cite{Robson}.

In 1996, Agnarsson, Amitsur, and 
Robson in~\cite{Geira} refined structural results from~\cite{Robson}
and obtained the following two theorems, the first of which is a 
three-element relations.
\begin{theorem}[\cite{Geira}]
\label{thm:three}
A ring $R$ is a complete $(m+n)\times (m+n)$ matrix ring $M_{m+n}(S)$ 
if and only if it contains elements $a$, $b$, and $f$ satisfying the 
relations $af^m+f^nb=1\text{ and }f^{m+n}=0.$
\end{theorem}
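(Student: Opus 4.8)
The plan is to prove both directions of the equivalence in Theorem~\ref{thm:three}, using Theorem~\ref{thm:matrix-units} as the bridge: produce $(m+n)\times(m+n)$ matrix units from the three elements $a,b,f$ and conversely.

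For the forward direction, suppose $R = M_{m+n}(S)$. The idea is to exhibit a ``shift'' matrix $f$ and two ``splitting'' elements $a,b$ playing the role of a block decomposition of size $m+n = m+n$. Concretely, let $f$ be the nilpotent matrix with $1$'s on the superdiagonal (the standard Jordan nilpotent $\sum_{i=1}^{m+n-1} e_{i,i+1}$ in matrix-unit notation), so that $f^{m+n}=0$ automatically, and $f^k$ is the matrix with $1$'s on the $k$-th superdiagonal. Then $f^m$ has $1$'s in positions $(i,i+m)$ for $1\le i\le n$, and $f^n$ has $1$'s in positions $(i,i+n)$ for $1\le i\le m$. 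I would take $a = \sum_{i=1}^{n} e_{i+m,i}$ and $b = \sum_{i=1}^{m} e_{i+n,i}$ (i.e.\ partial ``lower shift'' matrices adapted to the two block sizes); a direct computation gives $af^m = \sum_{i=1}^n e_{i+m,i+m}$ and $f^n b = \sum_{i=1}^m e_{i,i}$, and these are complementary diagonal idempotents summing to the identity, so $af^m + f^n b = 1$. This verifies that the relations are satisfiable in any complete $(m+n)\times(m+n)$ matrix ring.

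The harder and more interesting direction is the converse: given $a,b,f\in R$ with $af^m+f^nb=1$ and $f^{m+n}=0$, construct a full set of $(m+n)\times(m+n)$ matrix units. The strategy is to first build the two idempotents $e := f^n b$ and $e' := af^m$, noting $e+e'=1$, and then to refine each of them using powers of $f$ and the elements $a,b$ to get a finer orthogonal decomposition $1 = \sum_{k=1}^{m+n} e_{kk}$ into $m+n$ orthogonal idempotents, together with the off-diagonal units $e_{k,k+1}$ realized via multiplication by $f$ (going ``up'') and by suitable products involving $a$ and $b$ (going ``down''). The key algebraic facts one must extract from the two relations are: (i) $f^{m+n}=0$ forces the relation $af^m+f^nb=1$ to behave like a telescoping/partition-of-unity identity when multiplied by powers of $f$ on either side; (ii) repeatedly applying $f\cdot(af^m+f^nb)$ and $(af^m+f^nb)\cdot f$ yields that $f\cdot e$ and $e'\cdot f$ are ``nilpotent shifts'' between consecutive idempotents. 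One then checks the two matrix-unit axioms $\sum e_{kk}=1$ and $e_{ij}e_{k\ell}=\delta_{jk}e_{i\ell}$ directly, and invokes Theorem~\ref{thm:matrix-units} to conclude $R\cong M_{m+n}(S)$.

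The main obstacle I anticipate is bookkeeping: correctly defining the $m+n$ diagonal idempotents and the off-diagonal units so that all the matrix-unit relations hold, since $a$ and $b$ are not assumed to satisfy any relation beyond $af^m+f^nb=1$, and in particular $a,b,f$ need not generate a finite-dimensional algebra. The clean way around this is to not attempt to normalize $a$ and $b$ themselves, but instead to work only with the elements $e_{kk}af^me_{\ell\ell}$ and $e_{kk}f e_{\ell\ell}$ (and analogous products with $b$), showing that among all such products exactly the ``consecutive'' ones survive and that they multiply as matrix units; the relation $f^{m+n}=0$ is what kills the non-consecutive products. I would organize the computation around the single identity obtained by multiplying $af^m + f^n b = 1$ on the left by $f^k$ and on the right by $f^\ell$ for all relevant $k,\ell$, and then reading off the resulting triangular system. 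Once this is set up the verification is mechanical, so the essential content is the correct definition of the matrix units from $a$, $b$, and $f$.
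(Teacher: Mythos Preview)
The paper does not prove Theorem~\ref{thm:three}; it is quoted from the cited reference and used as a black box, so there is no in-paper proof to compare against directly. That said, the paper does invoke (at the start of Subsection~\ref{subsec:descr}) the explicit matrix-unit construction from that reference: in the case $m=n=1$ with $a=x^i$, $b=x^j$, $f=y$, the units are $E_{hk}=f^{h-1}(af^m)b^{k-1}$, and this is the concrete formula your converse argument is reaching for.

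Your forward direction is correct and is the standard construction. For the converse, your strategy is sound---the observation that $e'=af^m$ and $e=f^nb$ are complementary orthogonal idempotents (since $e'e=af^{m+n}b=0$ and $e'+e=1$ force $e'^2=e'(e'+e)=e'$, hence also $e^2=e$ and $ee'=0$) is exactly the right starting point, and ``refine using powers of $f$'' is the right next move. The gap is that you never commit to a definition of the $e_{hk}$; you describe a process (``multiply $af^m+f^nb=1$ by $f^k$ on the left and $f^\ell$ on the right and read off a triangular system'') rather than a formula. The cited source resolves this by simply \emph{defining} $E_{hk}=f^{h-1}(af^m)b^{k-1}$ (or a close variant) and then checking the matrix-unit axioms mechanically from the two relations. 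Once you write down such a closed-form candidate, the ``bookkeeping obstacle'' you anticipate largely evaporates: orthogonality of the diagonal idempotents and the product rule $E_{hk}E_{k\ell}=E_{h\ell}$ follow from short computations using $f^{m+n}=0$ and $af^m+f^nb=1$, with no need to analyze all products $e_{kk}fe_{\ell\ell}$ separately. So your plan is correct in outline but would benefit from replacing the procedural description with an explicit formula to verify.
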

Using this result they investigated rings of differential 
operators~\cite{Geira}.

In 1996, Lam and Leroy in~\cite{lampaper} investigated relations for 
recognizing matrix rings, in particular these three-element relations. 
Using the above Theorem~\ref{thm:three} from~\cite{Geira} they 
give an eigenring 
description, using a certain nilpotent element in $R$, for the ring 
$S$ over which $R$ is a complete $(m+n)\times (m+n)$ matrix ring.  In 
addition, they use Theorem~\ref{thm:three} to study Ore extension rings 
(or skew-polynomial rings).

Under these relations however, very little is known about the explicit 
structure  of the ring $S$.  In fact, under certain circumstances, 
$S$ may be the trivial ring.  Their next result is on two-element relations.
\begin{theorem}[\cite{Geira}]
\label{thm:two}
A ring $R$ is a complete $(m+n)\times (m+n)$ matrix ring $M_{m+n}(S)$ 
if and only if it contains elements $a$ and $f$ satisfying the relations 
$a^{m}f^{m}+f^na^n=1\text{ and }f^{m+n}=0.$
\end{theorem}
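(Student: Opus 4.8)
The plan is to derive Theorem~\ref{thm:two} from Theorem~\ref{thm:three}; the real work is a one-line reduction in the ``if'' direction and an explicit matrix construction in the ``only if'' direction. For the ``if'' direction, suppose $R$ contains $a$ and $f$ with $a^{m}f^{m}+f^{n}a^{n}=1$ and $f^{m+n}=0$. Putting $a':=a^{m}$, $b':=a^{n}$, $f':=f$ gives $a'(f')^{m}+(f')^{n}b'=a^{m}f^{m}+f^{n}a^{n}=1$ and $(f')^{m+n}=f^{m+n}=0$, so Theorem~\ref{thm:three} immediately yields $R\cong M_{m+n}(S)$ for some ring $S$. Thus this direction is automatic once Theorem~\ref{thm:three} is in hand: the content of the two-element relation is merely that a single element $a$, through its two powers $a^{m}$ and $a^{n}$, can supply the pair of elements required in Theorem~\ref{thm:three}.

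For the ``only if'' direction one must conversely exhibit such elements inside an arbitrary $M_{N}(S)$ with $N=m+n$. With $\{e_{i,j}\}$ the standard matrix units, a clean choice is $f=\sum_{i=1}^{N-1}e_{i,i+1}$, the single nilpotent upper shift (so $f^{m+n}=f^{N}=0$), together with $a=e_{1,N}+\sum_{i=1}^{N-1}e_{i+1,i}$, the permutation matrix of the $N$-cycle; as operators on column vectors $S^{N}$, $f$ sends $e_{j}\mapsto e_{j-1}$ with $e_{1}\mapsto 0$, while $a$ sends $e_{j}\mapsto e_{j+1}$ cyclically, with $e_{N}\mapsto e_{1}$. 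Taking $a$ to be a unit, rather than trying to force a nilpotent to do the job, is what keeps the two products $a^{m}f^{m}$ and $f^{n}a^{n}$ transparent.

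It then remains to compute those two products. Here $f^{m}$ annihilates $e_{1},\dots,e_{m}$ and sends $e_{j}\mapsto e_{j-m}$ for $j>m$, after which $a^{m}$ sends $e_{j-m}\mapsto e_{j}$ (no wraparound occurs); hence $a^{m}f^{m}$ is the projection onto the span of $e_{m+1},\dots,e_{N}$, i.e.\ $\sum_{k=m+1}^{N}e_{k,k}$. A symmetric computation---now the cyclic wraparound of $a^{n}$ carries $e_{m+1},\dots,e_{N}$ into the span of $e_{1},\dots,e_{n}$, which $f^{n}$ then kills---gives $f^{n}a^{n}=\sum_{k=1}^{m}e_{k,k}$. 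Adding, $a^{m}f^{m}+f^{n}a^{n}=\sum_{k=1}^{N}e_{k,k}=1_{R}$, which with $f^{N}=0$ completes the verification. The only genuinely error-prone point in the whole argument is this index bookkeeping---checking that exactly the diagonal products survive and that the index ranges $\{1,\dots,n\}$ and $\{1,\dots,m\}$ are the ones avoiding wraparound---so I would carry out the two matrix products explicitly, using the base case $m=n=1$ (where $a=\begin{psmallmatrix}0&1\\1&0\end{psmallmatrix}$, $f=\begin{psmallmatrix}0&1\\0&0\end{psmallmatrix}$, and indeed $af+fa=1$) as a sanity check. Beyond this, the theorem is a quick corollary of Theorem~\ref{thm:three}.
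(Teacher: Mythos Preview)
Your argument is correct in both directions. Note, however, that the paper does not actually prove Theorem~\ref{thm:two}: it is quoted from \cite{Geira} as background. The paper does remark, just after stating the theorem, that for the ``only if'' direction one may take $a$ to be the matrix with $1$'s on the sub-diagonal and $0$'s elsewhere (the nilpotent lower shift) and $f$ the super-diagonal upper shift. Your choice differs: you take $a$ to be the cyclic permutation matrix (a unit) rather than a nilpotent. Both choices work and lead to the same two complementary idempotents $a^{m}f^{m}=\sum_{k=m+1}^{N}e_{k,k}$ and $f^{n}a^{n}=\sum_{k=1}^{m}e_{k,k}$; the paper's nilpotent $a$ makes the index bookkeeping marginally simpler because no wraparound analysis is needed, while your invertible $a$ has the minor conceptual advantage that $a^{m}f^{m}$ and $f^{n}a^{n}$ are visibly projections (being conjugates of the projections $f^{m}a^{m}$ and $a^{n}f^{n}$). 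Either way the ``if'' direction is, as you say, an immediate specialization of Theorem~\ref{thm:three}.
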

Note that the characterizations given in 
Theorem~\ref{thm:matrix-units} uses $n^2$ elements together
with $n^4 +1$ relations among them to characterize a complete 
$n\times n$ matrix ring, whereas Theorems~\ref{thm:three} and~\ref{thm:two}
use three and two elements respectively and two relations involving these
elements to characterize complete $(m+n)\times(m+n)$ matrix rings. 
In particular, the number or elements and relations in 
Theorems~\ref{thm:three} and~\ref{thm:two} are {\em not} functions
of the size of the matrix ring. These are the only known such 
characterizations for complete matrix rings.

Under the two-element relations in Theorem~\ref{thm:two}, 
it is easy to find matrices over $S$ that satisfy the two
relations: $a$ can be the matrix with 1's along its sub-diagonal 
and 0's everywhere else, while $f$ can be the matrix with 1's along its 
super-diagonal and 0's everywhere else. It is therefore natural
to ask what happens if the first relation in Theorem~\ref{thm:three}
is replaced by $a^if^m + f^na^j = 1$. The ring $R$ is by 
Theorem~\ref{thm:three} a complete $(m+n)\times(m+n)$ matrix ring,
but it could be the trivial ring; in~\cite{Geirb} it is shown that
if a ring $R$ contains elements $a$ and $b$ such that $ab^m+b^na = 1$
and $b^{m+n}=0$ where $m\neq n$, then $R$ is the trivial ring.
This result together with Theorem~\ref{thm:two} strongly suggest the 
study of the universal ring that contains two elements $a$ and $f$ 
that satisfy $a^if^m + f^na^j = 1$ and $f^{m+n} = 0$. This is the 
motivation for this article.

\subsection{Basic Setup and Definitions}
\label{subsec:defs}

The set $\{1,2,3,\ldots\}$ of natural numbers
will be denoted by $\nats$ and for $n\in\nats$ we let 
$[n] = \{1,2,\ldots,n\}$. The field of rational numbers will be
denoted by ${\rats}$ and for a prime $p$, the unique finite field with 
$p$ elements will be denoted 
by ${\ints_p}$. For the rest of this article, all rings 
will be associative and unital, that is with a multiplicative unit $1$,
and all homomorphisms will be assumed unital. We begin with some definitions.
\begin{definition}
Let $A$ be a commutative ring.

(I) The {\em{free monoid}} $\left\langle x_1,\dots,x_n\right\rangle$ 
on $n$ indeterminates is the set of words made
by the indeterminates $x_i$ along with the binary operation of 
concatenation of words with identity, the empty word, which we 
denote by $1$.

(II) The {\em{free $A$-algebra}} $A\left\langle x_1,\dots,x_n\right\rangle$ 
over the ring $A$ on $n$ indeterminates is the set of formal linear 
combinations over $A$ of elements 
from $\left\langle x_1,\dots,x_n\right\rangle$. Addition 
is defined as formal sums of elements and multiplication is defined as 
concatenation of basis elements extended as an $A$-bilinear operation.

(III) For a subset 
$\{f_1,\ldots,f_N\}\subseteq A\left\langle x_1,\dots,x_n\right\rangle$,
the {\em free $A$-algebra on $x_1,\ldots,x_n$ satisfying 
$f_1 = \cdots = f_n = 0$}, denoted by
$A\left\langle x_1,\ldots,x_n : f_1,\ldots, f_N\right\rangle$,
is the quotient $A$-algebra $A\left\langle x_1,\dots,x_n\right\rangle/I$
where $I = (f_1,\ldots,f_N)$ is the two-sided ideal of 
$A\left\langle x_1,\dots,x_n\right\rangle$ generated by $f_1,\ldots,f_N$.
\end{definition}
We are interested in rings with two elements, $x$ and $y$, satisfying the 
relations $x^iy^m+y^nx^j=1$ and $y^{m+n}=0$. In this article we will
investigate the free object satisfying these two relations.
\begin{definition}
\label{def:thething}
For a commutative ring $A$ and natural numbers $i,j,m,n\in\nats$, 
let $R(A;i,j,m,n) := A\langle x,y : x^iy^m+y^nx^j=1,y^{m+n}=0\rangle$.
\end{definition}
By Theorem~\ref{thm:three} we have $R(A;i,j,m,n)\cong M_{m+n}(S)$ for some 
$A$-algebra $S$. As we have very little information about $S$, 
we introduce the following sets similarly as in~\cite{Geirb}.
\begin{definition}
\label{def:ABC}
For a commutative ring $A$ define the sets
$\mathcal{A}_A, \mathcal{B}_A, \mathcal{C}_A\subseteq {\nats}^4$ 
as follows:

$\mathcal{A}_A$: the set of $(i,j,m,n)\in{\nats}^4$
with a non-trivial homomorphism $R(A;i,j,m,n) \rightarrow M_{m+n}(A)$.

$\mathcal{B}_A$: the set of $(i,j,m,n)\in{\nats}^4$
with a non-trivial homomorphism $R(A;i,j,m,n)\rightarrow M_N(A)$
for some $N\in {\nats}$.

$\mathcal{C}_A$: the set of $(i,j,m,n)\in{\nats}^4$ with
$R(A;i,j,n,m)$ non-trivial.
\end{definition}
{\sc Remark: }
Here $\mathcal{A}_A$ is the set of those $(i,j,m,n)\in{\nats}^4$ such 
that there exist $(m+n)\times(m+n)$ matrices $x$ and $y$ over $A$ satisfying
the defining relations for $R(A;i,j,m,n)$, $\mathcal{B}_A$ is the set of 
those $(i,j,m,n)\in{\nats}^4$ such that there exist some finite rank 
matrices $x$ and $y$ over $A$ satisfying the defining relations for 
$R(A;i,j,m,n)$, lastly, $\mathcal{C}_A$ is here the set of those 
$(i,j,m,n)\in{\nats}^4$ such that there exist ``infinite'' matrices 
$x$ and $y$ over $A$ satisfying the defining relations for $R(A;i,j,m,n)$.

A big motivating question is as follows:
\begin{question}
\label{qst:motivation}
For a given commutative ring $A$, can we describe each of the sets
$\mathcal{A}_A$, $\mathcal{B}_A$ and $\mathcal{C}_A$ from 
Definition~\ref{def:ABC}? 
\end{question}
The purpose of this article is in part to extend the results in~\cite{Geirb}
that partly answer the above Question~\ref{qst:motivation}. 
The article is organized as follows.

In Section~\ref{sec:m=n=1} we will derive numerous structural
properties of $R(A;i,j,1,1)\cong M_2(S)$ and completely describe
the ring $S$ for relatively prime integers $i$ and $j$.

In Section~\ref{sec:Ak-fields} we investigate $\mathcal{A}_{\field}$ when 
when $m = n = 1$ and 
$\field$ is one of the base fields $\rats$ or $\ints_p$ where $p$ is a prime
number. We will determine all $i$ and $j$ such that 
$(i,j,1,1)\in\mathcal{A}_{\field}$ for these base fields, that is, we 
determine when exactly there are $2\times 2$ matrices over 
$\field\in\{\rats,\ints_p\}$ that satisfy the defining relations
for $R(\field;i,j,1,1)$ from Definition~\ref{def:thething}.

\section{The case $m=n=1$.}
\label{sec:m=n=1} 

In this section, we will give an explicit description
of $R(A;i,j,m,n)$ from Definition~\ref{def:thething}
when $\gcd(i,j)=1$ and $m=n=1$.  

\subsection{Relations and Reductions}
\label{subsec:rel-red}

In this subsection we will derive some technical results for $R(A;i,j,1,1)$.
\begin{lemma}
\label{lmm:identity}
For the generators $x,y\in R(A;i,j,1,1)$, we have the following relations:
\[
yx^iy=yx^jy=y, \ \ yx^{i+j}y=0, \ \ yx^{2i}y=-yx^{2j}y.
\]
\end{lemma}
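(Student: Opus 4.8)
The plan is to derive all four identities purely formally from the two defining relations $x^iy+yx^j=1$ and $y^2=0$, by left- and right-multiplying by $y$ and using $y^2=0$ to kill terms.

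First I would multiply the relation $x^iy+yx^j=1$ on the left by $y$: this gives $yx^iy + y^2x^j = y$, and since $y^2=0$ the second term vanishes, so $yx^iy=y$. Similarly, multiplying $x^iy+yx^j=1$ on the right by $y$ gives $x^iy^2 + yx^jy = y$, and again $y^2=0$ kills the first term, so $yx^jy=y$. This establishes the first pair of identities $yx^iy=yx^jy=y$.

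Next, for $yx^{i+j}y=0$: I would write $yx^{i+j}y = yx^i\cdot x^j y$ and use $yx^iy=y$ in a suitable sandwiched form. The cleanest route is to take $yx^iy=y$ and right-multiply by $x^jy$: this gives $yx^iyx^jy = yx^jy = y$. On the other hand, from $x^iy+yx^j=1$ we substitute: the left side is $yx^i(yx^j)y = yx^i(1-x^iy)y = yx^iy - yx^{2i}y^2 = yx^iy - 0 = y$ (using $y^2=0$), which is consistent but not yet the claim. Instead, I would compute $yx^{i+j}y$ directly by inserting the relation: $y = yx^iy$ can be multiplied appropriately, but the slick way is: from $yx^iy = y$, right-multiply by $x^jy$ to get $yx^iyx^jy = y$; now replace the inner $yx^j$ using $yx^j = 1 - x^iy$, giving $yx^i(1-x^iy)y = yx^iy - yx^ix^iy^2$; this is $y$, a tautology. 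The genuinely productive substitution is to replace $x^iy$ by $1-yx^j$ inside $yx^{i+j}y = yx^jx^iy = yx^j(x^iy)y$... wait, $yx^{i+j}y = yx^i x^j y$; I would instead use $yx^iy=y$ to write $yx^{i+j}y = yx^iy \cdot$ something — actually the correct manipulation is: $yx^{i+j}y = y x^j \cdot x^i y = y x^j (1 - y x^j) = yx^jy\cdot$... no. Let me state the intended step: multiply $1 = x^iy+yx^j$ on both sides by $y$, i.e. $y\cdot 1\cdot x^? $; the term $yx^{i+j}y$ arises from $y(x^iy+yx^j)x^j y = yx^jy = y$ expanded as $yx^iyx^jy + y^2x^{2j}y = yx^iyx^jy = y$, so $yx^iyx^jy=y$, and then replacing $yx^i$ by $1-yx^jx^{?}$... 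The honest summary for the paper: combine $yx^iy=y$ and $yx^jy=y$ by writing $yx^{i+j}y = (yx^iy)x^jy - $ correction terms, or more directly substitute $x^iy = 1-yx^j$ into $yx^{i+j}y = yx^j(x^iy)y\cdot$ — I expect the cleanest derivation is $yx^{i+j}y = yx^j\cdot x^iy\cdot y$ is wrong since there's no trailing $y$ to spare; rather $yx^{i+j}y$, write $x^{i+j}y = x^i(x^jy)$ and note $x^jy = 1 - yx^i$ is \emph{not} a defining relation (the relation is $x^iy+yx^j=1$, not symmetric). So I would instead substitute $x^iy = 1 - yx^j$: then $yx^{i+j}y = yx^jx^iy = yx^j(1-yx^j) = yx^jy\cdot$? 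No: $yx^jx^iy = yx^j(x^iy) = yx^j(1-yx^j) = yx^j - yx^jyx^j = yx^j - yx^j = 0$ using $yx^jy=y$ in the middle giving $yx^jyx^j = yx^j$. That works: $yx^{i+j}y = yx^jx^iy = yx^j(1-yx^j)$, but wait I need the trailing $y$. So: $yx^{i+j}y = yx^jx^i y$, and $x^iy = 1-yx^j$, so this equals $yx^j(1-yx^j) = yx^j - yx^jyx^j = yx^j - yx^j = 0$. But the left side had a trailing $y$ that I seem to have lost — recheck: $yx^{i+j}y = yx^j x^i y$ where the final $y$ is the last letter; $x^iy$ includes that final $y$, good, so $yx^jx^iy = yx^j\cdot(x^iy) = yx^j(1-yx^j)$. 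But $1-yx^j$ has no trailing $y$, while $x^iy$ does — contradiction. The resolution: $x^iy = 1-yx^j$ as elements, so $yx^j\cdot x^iy = yx^j\cdot(1-yx^j) = yx^j - (yx^j)(yx^j) = yx^j - y(x^jy)x^j = yx^j - yx^j = 0$. Good, so $yx^{i+j}y=0$, and there is no inconsistency — I conflated the string with the element. This is the main (only mildly fiddly) step.

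Finally, for $yx^{2i}y = -yx^{2j}y$: I would substitute the relation twice. Using $x^iy = 1-yx^j$, compute $yx^{2i}y = yx^i(x^iy) = yx^i(1-yx^j) = yx^iy - yx^iyx^j = y - yx^j$ (by $yx^iy=y$). Symmetrically, using $yx^i = $ the relation rearranged — note $yx^j = 1-x^iy$, so $yx^{2j}y = (yx^j)x^jy = (1-x^iy)x^jy = x^jy - x^iyx^jy = x^jy - x^i(yx^jy) = x^jy - x^iy$. Then $yx^{2i}y + yx^{2j}y = (y-yx^j) + (x^jy - x^iy)$; now $y = x^iy+yx^j$ so $y - yx^j = x^iy$, giving $x^iy + x^jy - x^iy = x^jy$, which is not obviously zero. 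So I would instead compute both in the same normal form: $yx^{2i}y = y-yx^j$ as above, and $yx^{2j}y = yx^j(x^jy) = yx^j(x^jy)$; using $x^jy$... hmm, $x^jy$ is not directly reducible. Better: $yx^{2j}y = (yx^jy)$ with an extra $x^j$: write $yx^{2j}y = y\cdot x^j\cdot x^jy$ and use $yx^j = 1 - x^iy$: $= (1-x^iy)x^jy = x^jy - x^i(yx^jy) = x^jy - x^iy$. And $yx^{2i}y = yx^i\cdot x^iy = yx^i(1-yx^j) = yx^iy - (yx^iy)x^j = y - yx^j$. Now add: $(y-yx^j)+(x^jy-x^iy)$. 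Substitute $y=x^iy+yx^j$ in the first: $(x^iy+yx^j-yx^j)+(x^jy-x^iy) = x^iy+x^jy-x^iy = x^jy \ne 0$ in general. So the two expressions must be brought to a common form differently: I would express $yx^{2i}y$ using the \emph{other} substitution $yx^j=1-x^iy$ from the right, and $yx^{2j}y$ using $x^iy=1-yx^j$ from the left, so both land in terms of the single monomial type, and the sum telescopes to $0$. The hard part, and the only place requiring care, is choosing which of the two substitutions to apply on which side so that the fourth identity collapses cleanly; I expect that $yx^{2i}y = y - yx^j$ and $-yx^{2j}y = -(x^jy - x^iy)$ are reconciled via the additional consequence that $yx^j$ and $x^jy$ coincide after multiplication by $y$ on the appropriate side — concretely $y(yx^{2i}y)$-type wrapping, or simply observing $yx^{2i}y$ and $yx^{2j}y$ are both computed and then one uses $x^iy=1-yx^j$ once more to convert $x^jy-x^iy = x^jy-1+yx^j$ and $y-yx^j$, whose sum is $y+x^jy-1$, then left-multiply... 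I would in the write-up simply present the two-substitution computation and verify the cancellation symbolically, noting it is forced by $y^2=0$.
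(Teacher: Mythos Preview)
Your proofs of $yx^iy=y$ and $yx^jy=y$ are fine and match the paper. Your eventual argument for $yx^{i+j}y=0$ --- namely $yx^{i+j}y = yx^j(x^iy) = yx^j(1-yx^j) = yx^j - (yx^jy)x^j = yx^j - yx^j = 0$ --- is correct, and is in fact a slightly different route from the paper's (the paper inserts $1=x^iy+yx^j$ in the middle of $yx^jx^iy$ to obtain $yx^{i+j}y=2yx^{i+j}y$). Either works.

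The fourth identity, however, has a genuine gap in your approach. First, there is a slip: $yx^i(1-yx^j) = yx^i - yx^iyx^j$, not $yx^iy - yx^iyx^j$; the correct result is $yx^{2i}y = yx^i - yx^j$, not $y - yx^j$. With this fixed, you get
\[
yx^{2i}y + yx^{2j}y = (yx^i - yx^j) + (x^jy - x^iy) = (yx^i + x^jy) - (x^iy + yx^j) = (yx^i + x^jy) - 1.
\]
Showing this is zero amounts to $x^jy + yx^i = 1$, which is exactly Theorem~\ref{thm:ijji} --- and that theorem is proved \emph{using} the present lemma. So your route is circular, and your final paragraph's hope that ``choosing which substitution to apply on which side'' will make things collapse cannot succeed with only the defining relations.

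The paper sidesteps this by inserting the relation into the \emph{already-known} zero element: write $0 = yx^{i+j}y = yx^i\cdot 1\cdot x^jy = yx^i(x^iy+yx^j)x^jy = yx^{2i}yx^jy + yx^iyx^{2j}y$, and then apply $yx^jy=y$ and $yx^iy=y$ to get $yx^{2i}y + yx^{2j}y = 0$. The key idea you are missing is to sandwich the relation between $yx^i$ on the left and $x^jy$ on the right, so that both resulting terms already carry a $y$ on each end and the first two identities apply directly.
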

\begin{proof}
We have that $x^iy+yx^j=1$ and $y^2=0$ and hence
\[
y  = y\cdot 1 = y(x^iy+yx^j) =  yx^iy+y^2x^j =  yx^iy.
\]
Similarly we obtain $y=(x^iy+yx^j)y=yx^jy$.

Using the above we get
\[
yx^{i+j}y = 
yx^jx^iy = yx^j(x^iy+yx^j)x^iy = yx^{i+j}yx^iy+yx^jyx^{i+j}y = 2yx^{i+j}y,
\]
and so $yx^{i+j}y=0$.

Finally, using the last two results we get
\[
0 =  yx^{i+j}y  = yx^ix^jy = yx^i(x^iy+yx^j)x^jy
= yx^{2i}yx^jy+x^iyx^{2j}y = yx^{2i}y+yx^{2j}y,
\]
and thus $yx^{2i}y=-yx^{2j}y$.
\end{proof}
We can now prove our first theorem.
\begin{theorem}
\label{thm:ijji}
$R(k;i,j,1,1)=R(k;j,i,1,1)$
\end{theorem}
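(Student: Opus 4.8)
The plan is to exhibit a mutually inverse pair of $A$-algebra homomorphisms between $R(k;i,j,1,1)$ and $R(k;j,i,1,1)$, and the natural candidate is the ``swap'' map that interchanges the two defining relations. Write $R = R(k;i,j,1,1)$ with generators $x,y$ satisfying $x^iy+yx^j=1$ and $y^2=0$, and $R' = R(k;j,i,1,1)$ with generators $x',y'$ satisfying $(x')^jy'+y'(x')^i=1$ and $(y')^2=0$. The defining relation $x^iy+yx^j=1$ of $R$, read with the roles of $i$ and $j$ swapped, is exactly the relation $y'(x')^j+(x')^iy'=1$ for $R'$ after also swapping the two summands; since addition is commutative, $(x')^iy'+y'(x')^j=1$ is the same relation. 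So the obvious thing to try is the map sending $x\mapsto x'$, $y\mapsto y'$ — but that does not obviously carry $x^iy+yx^j=1$ to a relation that holds in $R'$, because in $R'$ we only know $(x')^jy'+y'(x')^i=1$, not $(x')^iy'+y'(x')^j=1$. The fix is to use an anti-automorphism-flavored idea, or more simply to observe that the relation is symmetric enough: define $\varphi\colon R\to R'$ on generators by $x\mapsto x'$, $y\mapsto y'$ and check that the images satisfy $R$'s relations \emph{inside} $R'$.

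Concretely, in $R' = R(k;j,i,1,1)$ Lemma~\ref{lmm:identity} (applied with the pair $(j,i)$ in place of $(i,j)$) gives $y'(x')^iy' = y'(x')^jy' = y'$ and $y'(x')^{i+j}y' = 0$. These are symmetric in $i$ and $j$, so they hold verbatim whichever way we name the exponents. I would like to conclude that $(x')^iy' + y'(x')^j = 1$ holds in $R'$; this is not one of the listed consequences, so the real content of the theorem is to show that the two relations $x^iy+yx^j=1$ and $x^jy+yx^i=1$ generate the same two-sided ideal modulo $y^2=0$. The cleanest route is: starting from $(x')^jy'+y'(x')^i=1$ and $(y')^2=0$, derive $(x')^iy'+y'(x')^j=1$ directly. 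Multiply the known relation on the left by $y'(x')^i$ and on the right by suitable powers, or instead argue as in the proof of Lemma~\ref{lmm:identity}: one has $y' = y'(x')^iy' = y'(x')^jy'$ and, for any $a$, $(x')^iy'\cdot a = a - y'(x')^j a$ acts as a projection; tracking these identities one shows the two ``halves'' $(x')^iy'$ and $y'(x')^j$ of the unit can be rewritten into $(x')^jy'$ and $y'(x')^i$. If that direct manipulation stalls, the alternative is to invoke Theorem~\ref{thm:three}: $R'$ is $M_2(S')$ for some $S'$, and in $M_2(S')$ the element $y'$ is a rank-considerations nilpotent, forcing $x'$ to have a specific block shape in a basis adapted to $y'$, and in that explicit form both relations become visible simultaneously.

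So the key steps, in order, are: (1) set up $R$ and $R'$ with their generators and relations; (2) prove the auxiliary fact that modulo $y^2=0$ the relation $x^iy+yx^j=1$ implies $x^jy+yx^i=1$ — this is the crux — by the same style of left/right multiplication bookkeeping used in Lemma~\ref{lmm:identity}, or, failing a short argument, via the $M_2(S)$ normal form from Theorem~\ref{thm:three}; (3) conclude that $x\mapsto x'$, $y\mapsto y'$ defines an $A$-algebra homomorphism $\varphi\colon R\to R'$ and, by the symmetric argument, $\psi\colon R'\to R$; (4) observe $\psi\circ\varphi$ and $\varphi\circ\psi$ fix the generators, hence are identities, so $\varphi$ is an isomorphism, giving the equality $R(k;i,j,1,1)=R(k;j,i,1,1)$ as claimed (the statement phrases this as literal equality, which is the standard abuse when the presentations are identified along $\varphi$). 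The main obstacle is unquestionably step (2): showing the two unit-decomposition relations are equivalent modulo $y^2=0$, since this is precisely the nontrivial symmetry that makes the theorem true rather than a triviality; everything else is formal nonsense about presentations and inverse maps.
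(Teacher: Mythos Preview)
Your overall architecture is exactly the paper's: reduce to the single algebraic fact that in $R(k;i,j,1,1)$ the ``swapped'' relation $x^jy+yx^i=1$ already holds, then conclude by symmetry that the two presentations cut out the same quotient of $A\langle x,y\rangle$ (hence literal equality, not just isomorphism). You also correctly flag step~(2) as the entire content.

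The gap is that you never actually do step~(2). You gesture at ``left/right multiplication bookkeeping'' and offer the $M_2(S)$ normal form as a fallback, but neither is carried out, and the $M_2(S)$ route would be a substantial detour (and parts of the later matrix description quietly use this very symmetry). The paper's computation is short and worth internalizing: set $T=x^jy+yx^i$ and multiply by $1=x^iy+yx^j$ on the left,
\[
T=(x^iy+yx^j)(x^jy+yx^i)=x^i(yx^jy)+x^iy^2x^i+yx^{2j}y+(yx^jy)x^i
     =x^iy+yx^{2j}y+yx^i,
\]
so $yx^{2j}y=x^jy-x^iy$. Multiplying on the right instead gives $yx^{2i}y=yx^i-yx^j$. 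Now Lemma~\ref{lmm:identity} supplies $yx^{2i}y=-yx^{2j}y$, whence $yx^i-yx^j=x^iy-x^jy$, i.e.\ $x^jy+yx^i=x^iy+yx^j=1$. That is the missing idea: hit the \emph{unknown} expression $x^jy+yx^i$ with the known unit decomposition, not the other way around, and let the $yx^{2i}y=-yx^{2j}y$ identity from Lemma~\ref{lmm:identity} close the loop. Once you insert this paragraph, your steps (3)--(4) are indeed formal, and in fact the explicit $\varphi,\psi$ are unnecessary: both relation sets generate the same two-sided ideal, so the quotients coincide on the nose.
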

\begin{proof}
Using Lemma~\ref{lmm:identity}, $x^iy+yx^j=1$ and $y^2=0$, we get 
\begin{eqnarray*}
x^jy+yx^i & = & (x^iy+yx^j)(x^jy+yx^i)\\
& = & x^i(yx^jy)+x^iy^2x^j+yx^{2j}y+{yx^jy}x^i\\
& = & x^iy+yx^{2j}y+yx^i,
\end{eqnarray*}
and so $yx^{2j}y=x^jy-x^iy$.  Similarly, expanding $(x^jy+yx^i)(x^iy+yx^j)$, 
we get $yx^{2i}y=yx^i-yx^j$.  By Lemma~\ref{lmm:identity} again, we know that 
$yx^{2i}y=-yx^{2j}y$, so $x^jy+yx^i=x^iy+yx^j=1$, thus 
$R(A;i,j,1,1)\subseteq R(A;j,i,1,1)$.  By symmetry, we see $x^jy+yx^i=1$ 
and $y^2=0$ implies $x^iy+yx^j=1$ in $R(A;j,i,1,1)$ and so $x^iy+yx^j=1$,
thus $R(A;j,i,1,1)\subseteq R(A;i,j,1,1)$, and hence
$R(A;i,j,1,1)=R(A;j,i,1,1)$. 
\end{proof}
Note that Theorem~\ref{thm:ijji} is stronger than it appears;
it is clear that $R(A;i,j,1,1)$ is %left-right opposite
anti-isomorphic to  $R(A;j,i,1,1)$.  However, Theorem~\ref{thm:ijji} 
states the rings are actually equal as sets.
%dead on equal.
Without loss of generality, we can therefore assume either $i\leq j$
or $j\leq i$. For the rest of this section we will assume the latter.
\begin{lemma}
\label{lmm:injn}
For an arbitrary $n\in\nats$ and the generators $x,y\in R(A;i,j,1,1)$ 
we have:
\begin{align}
yx^{in}&=(-1)^nx^{jn}y+\sum_{k=0}^{n-1}(-1)^{n-1-k}x^{(n-1)j+k(i-j)} \label{eqn:a} \\
yx^{jn}&=(-1)^nx^{in}y+\sum_{k=0}^{n-1}(-1)^{k}x^{(n-1)j+k(i-j)} \label{eqn:b}
\end{align}
\end{lemma}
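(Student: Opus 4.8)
The plan is to prove both identities simultaneously by induction on $n$, starting from the two basic relations $x^iy+yx^j=1$ (equivalently $yx^j = 1-x^iy$) and $y^2=0$, together with the consequences recorded in Lemma~\ref{lmm:identity}. The base case $n=1$ is immediate: \eqref{eqn:a} reads $yx^i = -x^jy + x^{0}\cdot(\text{the }k=0\text{ term is }x^{0}) $, i.e. $yx^i = 1 - x^jy$, which is just the defining relation rearranged; similarly \eqref{eqn:b} with $n=1$ gives $yx^j = 1 - x^iy$. (One should double-check the exponent $(n-1)j + k(i-j)$ specializes correctly: at $n=1$ the only term is $k=0$, giving exponent $0$, hence the constant $1$, as needed.)

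For the inductive step I would assume \eqref{eqn:a} holds for $n$ and multiply on the right by $x^i$, then use the $n=1$ case to rewrite the resulting $yx^{jn}\cdot x^i$-type term — actually the cleaner route is: to get $yx^{i(n+1)}$, write $yx^{i(n+1)} = (yx^{in})x^i$ and substitute the inductive form of $yx^{in}$. The term $(-1)^n x^{jn}y\,x^i$ needs $yx^i$ moved past, but that introduces a $y$ on the wrong side; instead I would push the $y$ through using $yx^i = (-1)x^jy + 1$ only after first isolating it. A slicker bookkeeping is to prove \eqref{eqn:a} and \eqref{eqn:b} as a coupled pair: multiplying the relation $yx^j = 1 - x^iy$ on the left by $x^{jn}$ and combining with the inductive hypothesis for \eqref{eqn:b} produces \eqref{eqn:a} for $n+1$, and symmetrically. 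Concretely, from $yx^{jn} = (-1)^n x^{in}y + \sum_{k=0}^{n-1}(-1)^k x^{(n-1)j+k(i-j)}$, right-multiply by $x^j$ and replace $yx^j$ by $1 - x^iy$ in the first term; reindex the resulting sum and check that the new top term $k=n$ appears with the correct sign and exponent $(n)j + \text{(shift)}$. The exponent arithmetic $(n-1)j + k(i-j) \mapsto nj + k(i-j)$ under multiplication by $x^j$ and the telescoping/merging with the leftover $x^{in}\cdot(\text{stuff})$ term is the part that must be verified carefully.

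The main obstacle I anticipate is purely combinatorial: getting the index shifts and alternating signs in the two sums to line up, and in particular confirming that the ``cross term'' generated when $yx^j$ (or $yx^i$) is replaced by $1$ versus by $-x^iy$ (resp. $-x^jy$) recombines into a single sum of the stated form rather than leaving a stray $x^{\text{something}}y$ term. The identity $yx^{i+j}y = 0$ from Lemma~\ref{lmm:identity} is what kills such stray terms, and $yx^{2i}y = -yx^{2j}y$ may be needed to fold two half-sums together; so the real content is choosing the order of substitutions so that every $y\cdots y$ sandwich that appears is one of the vanishing or sign-flipping ones from Lemma~\ref{lmm:identity}. I expect no genuinely new ideas are required beyond Lemma~\ref{lmm:identity} and induction — the difficulty is entirely in disciplined reindexing, which a clean two-variable induction (carrying \eqref{eqn:a} and \eqref{eqn:b} together) should tame.
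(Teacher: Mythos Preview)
Your core idea---induction on $n$, writing $yx^{i(n+1)}=(yx^{in})x^i$, substituting the inductive hypothesis, and then replacing $yx^i$ by $1-x^jy$---is exactly what the paper does, and it works cleanly. But you are overestimating the difficulty in two respects.

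First, your worry that $(-1)^n x^{jn}\,(yx^i)$ ``introduces a $y$ on the wrong side'' is unfounded: substituting $yx^i=1-x^jy$ gives $(-1)^n x^{jn}-(-1)^n x^{j(n+1)}y$, with $y$ on the right exactly as required. No coupled induction between \eqref{eqn:a} and \eqref{eqn:b} is needed; each is proved independently by the same mechanism (the paper does \eqref{eqn:a} and then says ``in exactly the same way'' for \eqref{eqn:b}).

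Second, and more importantly, no $y\cdots y$ sandwiches ever appear in the computation, so the identities $yx^{i+j}y=0$ and $yx^{2i}y=-yx^{2j}y$ from Lemma~\ref{lmm:identity} are \emph{not} used in the inductive step. The only nontrivial input is that \emph{both} commutation rules
\[
yx^j=1-x^iy \quad\text{and}\quad yx^i=1-x^jy
\]
are available: the first is the defining relation rearranged, but the second is the content of Theorem~\ref{thm:ijji} (which is where Lemma~\ref{lmm:identity} is actually consumed). You invoke the right lemma but for the wrong reason; once you have $yx^i=1-x^jy$ in hand, the induction is pure bookkeeping---multiply by $x^i$, note that $(n-1)j+k(i-j)+i=nj+(k+1)(i-j)$, reindex $k\mapsto k-1$, and absorb the leftover $(-1)^n x^{jn}$ as the new $k=0$ term.
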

\begin{proof}
Since $yx^i=1-x^jy$ and $yx^j=1-x^iy$ by Theorem~\ref{thm:ijji}, 
we clearly have (\ref{eqn:a}) 
and (\ref{eqn:b}) for $n=1$. We proceed by induction on $n$ and
assume (\ref{eqn:a}) to hold for $n$. In that case we get
\begin{eqnarray*}
yx^{i(n+1)} & =  & \left(yx^{in}\right)x^i\\
  & = & (-1)^{n}x^{jn}yx^i + \sum_{k=0}^{n-1}(-1)^{n-1-k}x^{(n-1)j+k(i-j)+i}\\
  & = & (-1)^nx^{jn}(1-x^jy)+\sum_{k=0}^{n-1}(-1)^{n-(k+1)}x^{nj+(k+1)(i-j)}\\
  & = & (-1)^nx^{jn}+(-1)^{n+1}x^{j(n+1)}y+\sum_{k=1}^n(-1)^{n-k}x^{nj+k(i-j)}\\
  & = & (-1)^{n+1}x^{j(n+1)}y+\sum_{k=0}^{n}(-1)^{n-k}x^{nj+k(i-j)}.
\end{eqnarray*}
Thus, for every $n\in\nats$ we have (\ref{eqn:a}).
In exactly the same way we can prove (\ref{eqn:b}) by using
$yx^i = 1 - x^jy$ in the inductive step.
Hence, by induction we have (\ref{eqn:a}) and (\ref{eqn:b}) for 
every $n\in\nats$.
\end{proof}
\begin{lemma}
\label{lmm:polyy}
Let $p(x), q(x)\in A[x]$. If $p(x)y=q(x)y$ in $R(A;i,j,1,1)$, 
then $p(x)=q(x)$.
\end{lemma}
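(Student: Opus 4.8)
The plan is to build an explicit $A$-linear projection from $R(A;i,j,1,1)$ onto the polynomial subring $A[x]$ that kills everything involving $y$, and then apply it to the equation $p(x)y = q(x)y$. To make such a map well-defined we need a concrete handle on a spanning set for $R(A;i,j,1,1)$ as an $A$-module. Since $y^2=0$, every word in $x$ and $y$ can be rewritten, using the relations, into a normal form that is $A$-linear combination of monomials of the shape $x^a$, $x^a y$, $y x^b$, and $x^a y x^b$; moreover Lemma~\ref{lmm:injn} (together with $x^iy+yx^j=1$ and Lemma~\ref{lmm:identity}) lets us push the ``$yx^b$'' factors to the left, expressing $yx^b$ as a polynomial in $x$ plus a term of the form $x^c y$. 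So the module is spanned by $\{x^a : a\ge 0\}\cup\{x^a y x^b : a\ge 0,\ b\ge 0\}$, with the $y$-free part $A[x]$ sitting inside as a direct summand if we can show this spanning set is ``independent enough'' in the relevant sense.

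The cleanest route is therefore to exhibit a \emph{specific} $A$-algebra homomorphism or, more cheaply, an $A$-module homomorphism $\pi\colon R(A;i,j,1,1)\to A[x]$ with $\pi(x)=x$ and $\pi(x^ayx^b)=0$ for all $a,b$, and with $\pi$ restricting to the identity on $A[x]$. One concrete way: realize $R(A;i,j,1,1)$ inside $M_2\big(A[x,x^{-1}]\big)$ or a suitable Ore-type extension where $x$ acts invertibly, send $x\mapsto\left(\begin{smallmatrix} x & 0\\ 0 & x\end{smallmatrix}\right)$-like data and $y$ to a strictly-triangular matrix whose entries lie in $A[x,x^{-1}]$, chosen so that $x^iy+yx^j=1$ and $y^2=0$ hold; then the $(1,1)$-entry (or the diagonal) gives a polynomial-valued functional that annihilates $y$. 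Applying this functional to $p(x)y=q(x)y$ yields $0=0$, which is useless, so instead apply the map $r\mapsto \pi(rx^i y)$ type trick: since $yx^iy = y$ by Lemma~\ref{lmm:identity}, from $p(x)y=q(x)y$ we get, after multiplying on the right by $x^i y$ and on the left suitably, relations that separate $p$ and $q$. Concretely, multiply $p(x)y=q(x)y$ on the left by $y x^i$: using $yx^i y = y$ we obtain $yx^i p(x) y = y x^i q(x) y$, and now Lemma~\ref{lmm:injn} rewrites each side as (polynomial in $x$)$\cdot y$ plus (polynomial in $x$)$\,y\,$— still not a scalar. The honest fix is the module-basis statement.

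So the main step I would actually carry out is: prove that $R(A;i,j,1,1)$ is free as a left $A[x]$-module on the basis $\{1\}\cup\{y x^b : 0\le b\le i+j-1\}$ (or an equivalent finite list), the finiteness coming from Lemma~\ref{lmm:injn} and $yx^{i+j}y=0$, and with $A[x]\cdot 1$ a free direct summand. Granting that decomposition, write $p(x)y - q(x)y = \big(p(x)-q(x)\big)y$; this lies in the $A[x]$-submodule generated by $y = yx^0$, and the claim $p(x)y = q(x)y$ says it is zero, so by freeness of that summand $p(x)-q(x)=0$ in $A[x]$. \textbf{The hard part} is establishing the freeness / direct-summand claim without circularity — i.e.\ showing no nontrivial $A[x]$-relation holds among $1$ and the $yx^b$. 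I expect to get this from the matrix model: $R(A;i,j,1,1)\cong M_2(S)$ by Theorem~\ref{thm:three}, and under an explicit choice of matrix units the image of $x$ and the images of the $yx^b$ can be read off, making the $A[x]$-independence a short linear-algebra check over $S$. Once the spanning-plus-independence bookkeeping is pinned down, the lemma is immediate.
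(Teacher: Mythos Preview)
Your proposal is far more complicated than needed and never actually completes the key step. You correctly flag the ``hard part'' as the freeness/direct-summand claim for $A[x]\cdot y$ inside $R$, but you do not prove it; and your suggested route through the matrix model risks circularity, since the explicit identification of $x$ as a matrix over $S$ (which you would need to check $A[x]$-independence) relies on Corollary~\ref{cor:rootofunity} and Theorem~\ref{thm:cyclic}, whose proofs in turn use this very lemma. From Theorem~\ref{thm:three} alone you only know $R\cong M_2(S)$ for \emph{some} $S$, with no handle on the image of $x$ itself.

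The paper's argument is a three-line trick that you narrowly missed. From $p(x)y=q(x)y$, multiply on the \emph{right} by $x^j$ and use $yx^j=1-x^iy$:
\[
p(x)\,yx^j \;=\; p(x)\bigl(1-x^iy\bigr) \;=\; p(x)-x^i\,p(x)\,y,
\]
where $p(x)$ commutes with $x^i$ since both are polynomials in $x$. The same computation gives $q(x)\,yx^j=q(x)-x^i\,q(x)\,y$. The left-hand sides agree because $p(x)y=q(x)y$, and the terms $x^ip(x)y$ and $x^iq(x)y$ agree for the same reason, so $p(x)=q(x)$ in $R$. Your instinct to multiply and invoke a relation like $yx^iy=y$ was close; the point is to multiply on the right by $x^j$ rather than on the left by $yx^i$, so that $p(x)$ stays on the outside and separates cleanly.
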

\begin{proof}
Suppose $p(x)y=q(x)y$ for some polynomials $p(x), q(x)\in A[x]$.
Then 
\[
p(x)yx^j = p(x)(1-x^iy) = p(x)-p(x)x^iy = p(x)-x^ip(x)y,
\]
since $p(x)$ is a polynomial in $x$ and thus commutes with $x^i$.  
By the same argument we have $q(x)yx^j=q(x)-x^iq(x)y$. 
Since $p(x)y=q(x)y$, we get
\[
p(x)-x^ip(x)y = p(x)yx^j = q(x)yx^j = q(x)-x^iq(x)y = q(x)-x^ip(x)y
\]
and hence $p(x)=q(x)$.
\end{proof}
The next lemma will come in handy later on.
\begin{lemma}
\label{lmm:invert}
If $i\neq j$, then $x$ is invertible in $R(A;i,j,1,1)$.
\end{lemma}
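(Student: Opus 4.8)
The plan is to produce an explicit two-sided inverse of $x$ as an $A$-polynomial in $x$ together with a tail involving $y$, by exploiting the relations of Lemma~\ref{lmm:identity} and Lemma~\ref{lmm:injn}. The starting point is the defining relation $x^iy + yx^j = 1$. Since $i \neq j$, I may assume (as the paper does) $j < i$, so $i - j \geq 1$. Multiply $x^iy + yx^j = 1$ on the right by $x$ and use $yx = \cdots$ expansions; more directly, the cleanest route is to find a polynomial $g(x) \in A[x]$ and an element $h \in R(A;i,j,1,1)$ so that $x \cdot \big(g(x) + h\big) = 1$, then separately check the left inverse and invoke that a ring element with both a left and right inverse is a unit.

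First I would extract, from equations \eqref{eqn:a} and \eqref{eqn:b} of Lemma~\ref{lmm:injn}, an identity expressing a \emph{nonzero constant} (an element of $A^\times$, ideally $1$) as an $A$-linear combination of the $x^{(n-1)j + k(i-j)}$ together with terms $x^{in}y$ and $x^{jn}y$. Concretely, subtract a suitable multiple of \eqref{eqn:a} from \eqref{eqn:b} for $n=2$ (or small $n$): the $y$-terms are $(-1)^n x^{in}y$ and $(-1)^n x^{jn}y$, and these can be eliminated or controlled using $yx^{i+j}y = 0$ and $yx^{2i}y = -yx^{2j}y$ from Lemma~\ref{lmm:identity}. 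The goal is to isolate a relation of the shape $x \cdot p(x) + x \cdot q(x)\,y = 1$ for explicit $p, q \in A[x]$, because then $x$ has the right inverse $p(x) + q(x)y$.

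The key algebraic observation driving this is that $x^{i-j}$ appears as the "step" in the exponents, and since $\gcd(i,j)=1$ forces $\gcd(i-j, j) = \gcd(i,j) = 1$, one can hope to reach exponent $0$ — i.e. a unit — by combining these relations; but for mere invertibility of $x$ we do not even need the full $\gcd$ hypothesis, only $i \neq j$, so a lighter argument suffices. Specifically, from $x^iy + yx^j = 1$ and $x^jy + yx^i = 1$ (Theorem~\ref{thm:ijji}) we get $x^i y - x^j y = yx^i - yx^j$, i.e. $(x^i - x^j)y = y(x^i - x^j)$; combined with $yx^iy = yx^jy = y$ this pins down enough commutation to factor out a power of $x$. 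I would multiply $x^iy + yx^j = 1$ by an appropriate polynomial to clear the lowest-degree obstruction and show $x^{j} \cdot u = 1$ for an explicit $u$, whence $x \cdot (x^{j-1}u) = 1$, and symmetrically on the left using the anti-structure.

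The main obstacle I anticipate is bookkeeping: matching exponents in the telescoping sums of Lemma~\ref{lmm:injn} so that the non-$y$ terms collapse to a single monomial $x^0 = 1$ (or a unit of $A$) rather than an uncontrolled polynomial, and simultaneously showing the residual $y$-terms can be absorbed (they will, because any element of the form $(\text{stuff})\,y$ multiplied suitably lands back among known relations via $y^2 = 0$). Once a right inverse $v$ of $x$ is in hand, the left inverse follows either by the symmetric computation (swapping the roles of the two relations $x^iy+yx^j=1$ and $x^jy+yx^i=1$) or by the standard fact that in the quotient, $vx$ is idempotent-adjacent and a one-sided inverse that is itself a polynomial plus $y$-tail must be two-sided. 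I would close by remarking that the explicit formula for $x^{-1}$ obtained here is exactly what is needed in the subsequent structural analysis of $S$.
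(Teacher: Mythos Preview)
Your proposal is a plan, not a proof: you never actually carry out the telescoping or produce the element $u$ with $x^j u = 1$. The ``main obstacle'' you identify (matching exponents in the sums of Lemma~\ref{lmm:injn} so that the constant term survives) is left completely open, so as written there is a genuine gap.

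More importantly, you are reaching for heavy machinery where a two-line substitution suffices. You already noted the two relations $x^iy+yx^j=1$ and $x^jy+yx^i=1$ from Theorem~\ref{thm:ijji}; the paper simply \emph{substitutes} the second into the first. With $i>j$, write
\[
1 \;=\; x^iy+yx^j \;=\; x^{i-j}(x^jy)+yx^j \;=\; x^{i-j}(1-yx^i)+yx^j
\;=\; \bigl(x^{i-j-1}-x^{i-j}yx^{i-1}+yx^{j-1}\bigr)x,
\]
exhibiting a left inverse directly (every exponent is nonnegative since $i>j\ge 1$). The symmetric manipulation, starting from $1 = x^jy + yx^i = x^jy + (yx^j)x^{i-j}$ and substituting $yx^j = 1 - x^iy$, gives $1 = x\bigl(x^{j-1}y + x^{i-j-1} - x^{i-1}yx^{i-j}\bigr)$, a right inverse. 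No telescoping, no Lemma~\ref{lmm:injn}, no worry about coefficients in $A^\times$. Your instinct to combine the two relations from Theorem~\ref{thm:ijji} was right; the step you missed is that one relation rewrites $x^jy$ (resp.\ $yx^j$) as $1$ minus a term divisible by $x$ on the right (resp.\ left), so a single substitution makes $x$ a common factor.
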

\begin{proof}
By Theorem~\ref{thm:ijji} we may without loss of generality assume $i>j$. 
We then get
\begin{eqnarray*}
1 & = & x^iy+yx^j\\
& = & x^{i-j}(x^jy)+yx^j\\
& = & x^{i-j}(1-yx^i)+yx^j\\
& = & (x^{i-j-1}-x^{i-j}yx^{i-1}+yx^{j-1})x
\end{eqnarray*}
Similarly, we can show $1=x(x^{j-1}y+x^{i-j-1}-x^{i-1}yx^{i-j})$,  
and so $x$ is invertible. 
\end{proof}
We are now able to show a useful relation for $x$.
\begin{theorem}
\label{thm:cyclic}
If $i > j$ and $\gcd(i,j)=d$, then in $R(A;i,j,1,1)$ we have 
\[
x^{((i+j)/d-1)(i-j)} = \sum_{k=1}^{(i+j)/d-1}(-1)^{k+1}x^{((i+j)/d-1-k)(i-j)}.
\]
\end{theorem}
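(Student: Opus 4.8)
The plan is to evaluate $y\,x^{\ell}\,y$ for $\ell=\operatorname{lcm}(i,j)$ in two different ways using Lemma~\ref{lmm:injn}, strip off the trailing $y$ with Lemma~\ref{lmm:polyy}, and then use that $x$ is a unit (Lemma~\ref{lmm:invert}) to bring the resulting polynomial identity into the stated shape.

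Write $d=\gcd(i,j)$, $i=d\,i'$, $j=d\,j'$, and $N=(i+j)/d=i'+j'$, so that $i'>j'\ge 1$ (as $i>j$), $i-j=d(i'-j')$, and $i\,j'=j\,i'=\operatorname{lcm}(i,j)$. First I would apply~(\ref{eqn:a}) with $n=j'$ and multiply on the right by $y$; since $y^{2}=0$ the term $(-1)^{n}x^{jn}y$ is killed, leaving
\[
y\,x^{i j'}\,y=\Bigl(\textstyle\sum_{k=0}^{j'-1}(-1)^{j'-1-k}x^{(j'-1)j+k(i-j)}\Bigr)y .
\]
Applying~(\ref{eqn:b}) with $n=i'$ and multiplying on the right by $y$ gives, in the same way,
\[
y\,x^{j i'}\,y=\Bigl(\textstyle\sum_{k=0}^{i'-1}(-1)^{k}x^{(i'-1)j+k(i-j)}\Bigr)y .
\]
Because $i\,j'=j\,i'$ the two left-hand sides are the same element of $R(A;i,j,1,1)$, so equating the right-hand sides and cancelling $y$ by Lemma~\ref{lmm:polyy} yields the relation
\[
\sum_{k=0}^{j'-1}(-1)^{j'-1-k}x^{(j'-1)j+k(i-j)}=\sum_{k=0}^{i'-1}(-1)^{k}x^{(i'-1)j+k(i-j)} .
\]

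It then remains to simplify. By Lemma~\ref{lmm:invert} the element $x$ is invertible, so I may multiply both sides by $x^{-(j'-1)j}$; using the identity $(i'-1)j-(j'-1)j=(i'-j')j=j'(i-j)$ the right-hand side becomes $\sum_{k=0}^{i'-1}(-1)^{k}x^{(j'+k)(i-j)}$. Putting $w=-x^{i-j}$ and collecting signs, the left side equals $(-1)^{j'-1}\sum_{k=0}^{j'-1}w^{k}$ and the right side equals $(-1)^{j'}w^{j'}\sum_{k=0}^{i'-1}w^{k}$; multiplying through by $(-1)^{j'-1}$ and rearranging gives $\sum_{k=0}^{j'-1}w^{k}+w^{j'}\sum_{k=0}^{i'-1}w^{k}=0$. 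Since the index sets $\{0,\dots,j'-1\}$ and $\{j',\dots,j'+i'-1\}$ partition $\{0,\dots,N-1\}$, this collapses to $\sum_{k=0}^{N-1}w^{k}=0$, i.e.\ $\sum_{k=0}^{N-1}(-1)^{k}x^{k(i-j)}=0$; isolating the $k=N-1$ term and re-indexing by $k\mapsto N-1-k$ produces precisely the claimed identity.

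I expect the only real difficulty to be the bookkeeping of exponents and alternating signs — in particular verifying $(i'-j')j=j'(i-j)$, which is what makes the normalization by $x^{-(j'-1)j}$ come out cleanly, and checking that the two index ranges tile $\{0,\dots,N-1\}$. The one genuinely creative move, namely comparing $y\,x^{\operatorname{lcm}(i,j)}\,y$ through both halves of Lemma~\ref{lmm:injn} and then removing $y$ via Lemma~\ref{lmm:polyy}, is short; everything afterwards is routine computation inside the commutative subring $A[x,x^{-1}]$ of $R(A;i,j,1,1)$.
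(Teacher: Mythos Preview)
Your proof is correct and follows essentially the same path as the paper: evaluate $y\,x^{ij/d}\,y$ two ways via Lemma~\ref{lmm:injn}, cancel the trailing $y$ with Lemma~\ref{lmm:polyy}, divide by the common power of $x$ using Lemma~\ref{lmm:invert}, and re-index. The only difference is cosmetic: where the paper shifts one sum by $j'$ via the identity $(j'-1)j+k(i-j)=(i'-1)j+(k-j')(i-j)$ and then concatenates the two index ranges directly, you achieve the same concatenation through the substitution $w=-x^{\,i-j}$, which packages the alternating signs neatly; the resulting identity $\sum_{k=0}^{N-1}(-1)^k x^{k(i-j)}=0$ is identical in both arguments.
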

\begin{proof}
We will evaluate the element $yx^{(ij)/d}y$ in two ways using our relations 
for $yx^{in}$ and $yx^{jn}$.  Here, let $a=i/d$ and $b=j/d$. By 
Lemma~\ref{lmm:injn} we have for $n=b$ and $n=a$ respectively that
\[
{yx^{(ij)/d}y=\sum_{k=0}^{b-1}(-1)^{b-1-k}x^{(b-1)j+k(i-j)}y} 
\mbox{ and } {yx^{(ij)/d}y=\sum_{k=0}^{a-1}(-1)^{k}x^{(a-1)j+k(i-j)}y},
\] 
and hence
\[
\sum_{k=0}^{a-1}(-1)^{k}x^{(a-1)j+k(i-j)}y
=\sum_{k=0}^{b-1}(-1)^{b-1-k}x^{(b-1)j+k(i-j)}y.
\]
Using Lemma~\ref{lmm:polyy} we get
\[
{0=\sum_{k=0}^{b-1}(-1)^{b-k}x^{(b-1)j+k(i-j)}
+\sum_{k=0}^{a-1}(-1)^{k}x^{(a-1)j+k(i-j)}}.
\]
Since $aj=bi$ we have, $(b-1)j+k(i-j)=(a-1)j+(k-b)(i-j)$, and so 
\[
\sum_{k=0}^{a-1}(-1)^kx^{(a-1)j+k(i-j)}
=\sum_{k=b}^{a+b-1}(-1)^{b-k}x^{(b-1)j+k(i-j)},
\]
and therefore 
\begin{eqnarray*}
 0 & = &\sum_{k=0}^{b-1}(-1)^{b-k}x^{(b-1)j+k(i-j)}
 +\sum_{k=b}^{a+b-1}(-1)^{b-k}x^{(b-1)j+k(i-j)} \\
   & = &\sum_{k=0}^{a+b-1}(-1)^{b-k}x^{(b-1)j+k(i-j)}.
\end{eqnarray*}  
Since $x$ is invertible by Theorem~\ref{lmm:invert}, we obtain 
\[
0=\sum_{k=0}^{a+b-1}(-1)^{b-k}x^{k(i-j)},
\]
and by re-indexing and shifting the last term we have, 
\[
x^{(a+b-1)(i-j)}=\sum_{k=1}^{a+b-1}(-1)^{k+1}x^{(a+b-1-k)(i-j)}.
\]
\end{proof}
Writing out the above sum, we see that the relation in Theorem~\ref{thm:cyclic} 
gives an alternating series relation for $x^{(i+j)/(d-1)(i-j)}$.  Letting 
$m=(i+j)/d$ we have 
\[
x^{(m-1)(i-j)}=x^{(m-2)(i-j)}-x^{(m-3)(i-j)}+\dots+(-1)^{(i+j)/d}.
\]
Theorem~\ref{thm:cyclic} can be used to obtain the following.
\begin{corollary}
\label{cor:rootofunity}
If $d=\gcd(i,j)$, then $x^{(i^2-j^2)/d}=(-1)^{(i+j)/d}$.
\end{corollary}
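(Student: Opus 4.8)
The plan is to recognize the conclusion of Theorem~\ref{thm:cyclic} as one factor in the classical factorization of $t^m-(-1)^m$, applied with $t=x^{i-j}$ and $m=(i+j)/d$. First I would dispose of the degenerate case $i=j$: there $d=i$, so $(i^2-j^2)/d=0$ and $(i+j)/d=2$, and the asserted identity reads $x^0=1=(-1)^2$. So from now on assume $i\neq j$, and by Theorem~\ref{thm:ijji} assume $i>j$, which puts us in the setting of Theorem~\ref{thm:cyclic}.

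Next I would rewrite Theorem~\ref{thm:cyclic} in the variable $t=x^{i-j}$, which is central in the commutative subring $A[x]\subseteq R(A;i,j,1,1)$. Setting $m=(i+j)/d$, that theorem reads
\[
t^{m-1}=\sum_{k=1}^{m-1}(-1)^{k+1}t^{m-1-k};
\]
moving $t^{m-1}$ to the right-hand side (it is exactly the would-be $k=0$ term) and multiplying through by $-1$ turns this into the clean relation
\[
\sum_{k=0}^{m-1}(-1)^k t^{m-1-k}=0
\]
inside $R(A;i,j,1,1)$.

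Now multiply both sides by $t+1$. Because $t$ commutes with every power of $x$, the geometric-sum identity $(t+1)\sum_{k=0}^{m-1}(-1)^k t^{m-1-k}=t^m-(-1)^m$ holds verbatim in this ring, so $t^m-(-1)^m=0$, i.e.\ $t^m=(-1)^m$. Substituting back $t=x^{i-j}$ and $m=(i+j)/d$ gives $x^{(i-j)(i+j)/d}=(-1)^{(i+j)/d}$, that is $x^{(i^2-j^2)/d}=(-1)^{(i+j)/d}$, as claimed. Note that, unlike the proof of Theorem~\ref{thm:cyclic}, this argument only multiplies (never divides) by $t+1$, so Lemma~\ref{lmm:invert} is not needed here.

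The only place needing care is the sign/index bookkeeping in passing from the displayed form of Theorem~\ref{thm:cyclic} to $\sum_{k=0}^{m-1}(-1)^k t^{m-1-k}=0$, together with the (routine, once noticed) observation that the telescoping factorization of $t^m-(-1)^m$ is legitimate precisely because everything takes place in the commutative subring generated by $x$. I do not expect any genuine obstacle beyond this.
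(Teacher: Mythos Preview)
Your argument is correct and is essentially the paper's own proof, phrased more cleanly: the paper multiplies the relation of Theorem~\ref{thm:cyclic} by $x^{i-j}$ and then re-substitutes Theorem~\ref{thm:cyclic} for the resulting top-degree term, which is exactly the telescoping you obtain in one stroke by recognizing $\sum_{k=0}^{m-1}(-1)^k t^{m-1-k}$ as a factor of $t^m-(-1)^m$ and multiplying by $t+1$. Your handling of the $i=j$ case and the sign bookkeeping are fine.
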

\begin{proof}
By Theorem~\ref{thm:cyclic}, we have
$x^{((i+j)/d-1)(i-j)} =\sum_{k=1}^{(i+j)/d-1}(-1)^{k+1}x^{((i+j)/d-1-k)(i-j)}$.  
Multiplying both 
sides of this relation by $x^{i-j}$ and using Theorem~\ref{thm:cyclic} again, 
we get
\begin{eqnarray*}
{x^{{(i+j)(i-j)}/{d}}} 
& = & {\sum_{k=1}^{(i+j)/d-1}(-1)^{k+1}x^{((i+j)/d-k)(i-j)}}\\
& = & x^{((i+j)/d-1)(i-j)}+{\sum_{k=2}^{(i+j)/d-1}(-1)^{k+1}x^{((i+j)/d-k)(i-j)}}\\
& = & {\sum_{k=1}^{(i+j)/d-1}(-1)^{k+1}x^{((i+j)/d-k-1)(i-j)}
  -\sum_{k=1}^{(i+j)/d-2}(-1)^{k+1}x^{((i+j)/d-k-1)(i-j)}}\\
& = & (-1)^{(i+j)/d}.
\end{eqnarray*}
\end{proof}
If $i\neq j$, Corollary~\ref{cor:rootofunity} shows that $x$ can be viewed
as a root of unity.
\begin{proposition}
\label{prp:centerR}
The elements $x^{i+j}$ and $x^i-x^j$ are in the center of $R(A;i,j,1,1)$.
\end{proposition}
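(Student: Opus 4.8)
The plan is to show that both $x^{i+j}$ and $x^i-x^j$ commute with the two generators $x$ and $y$, which suffices since $x$ and $y$ generate $R(A;i,j,1,1)$ as an $A$-algebra and the center is an $A$-subalgebra. Commutation with $x$ is immediate in both cases, since powers of $x$ commute with $x$. So the whole content is commutation with $y$, i.e. proving $x^{i+j}y = yx^{i+j}$ and $(x^i-x^j)y = y(x^i-x^j)$.

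For the second claim, I would start from the reformulated relations available after Theorem~\ref{thm:ijji}: we have $x^iy+yx^j=1$ and also $x^jy+yx^i=1$. Subtracting these two identities gives $x^iy-x^jy = yx^i-yx^j$, that is $(x^i-x^j)y = y(x^i-x^j)$, which is exactly what we want. This is a one-line argument once Theorem~\ref{thm:ijji} is in hand.

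For the first claim, the cleanest route is to compute $x^{i+j}y$ and $yx^{i+j}$ and match them. Using $x^jy = 1 - yx^i$ (from the $(j,i)$-form of the relation), multiply on the left by $x^i$ to get $x^{i+j}y = x^i - x^iyx^i$. Symmetrically, using $x^iy = 1 - yx^j$ and multiplying on the left by $x^j$ gives $x^{i+j}y$ again as $x^j - x^jyx^j$; more usefully, using $yx^j = 1 - x^iy$ and multiplying on the right by $x^i$ gives $yx^{i+j} = x^i - x^iyx^i$ as well. Hence both $x^{i+j}y$ and $yx^{i+j}$ equal $x^i - x^iyx^i$, so they are equal. (One should double-check the bookkeeping: the point is that each of the four relations $x^iy+yx^j=1$, $x^jy+yx^i=1$, $yx^j+x^iy=1$, $yx^i+x^jy=1$ is available, and judicious left/right multiplication by $x^i$ or $x^j$ produces the common expression.)

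The only mild obstacle is making sure one invokes the correct one of the two forms of the linear relation at each step so that the stray term is literally $x^iyx^i$ on both sides rather than, say, $x^iyx^i$ on one side and $x^jyx^j$ on the other — but since Corollary~\ref{cor:rootofunity} and Lemma~\ref{lmm:identity} give further identities like $yx^{i+j}y=0$ and $yx^{2i}y=-yx^{2j}y$, even a mismatched pair of expressions can be reconciled if needed. Alternatively, one can avoid the $y$-on-both-sides juggling entirely: from $(x^i-x^j)y=y(x^i-x^j)$ already proved, and from $x^iy+yx^j=1$ rewritten as $x^iy+yx^i = 1 + y(x^i-x^j) = 1 + (x^i-x^j)y$, one gets that $x^iy+yx^i$ also commutes appropriately, and combining with the centrality of $x^i-x^j$ pins down $x^{i+j}$. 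I expect the first presentation (exhibiting the common value $x^i-x^iyx^i$) to be the shortest and I would write the proof that way.
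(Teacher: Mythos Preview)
Your proposal is correct and follows essentially the same approach as the paper: both reduce to checking commutation with $y$, and both use the pair of relations $x^iy+yx^j=1$ and $x^jy+yx^i=1$ (the latter from Theorem~\ref{thm:ijji}) to verify this directly. The only cosmetic difference is that for $x^{i+j}$ you exhibit the common value $x^i - x^iyx^i$ for both $x^{i+j}y$ and $yx^{i+j}$, whereas the paper chains two substitutions to transform $yx^{i+j}$ through $x^j - x^jyx^j$ into $x^{i+j}y$; these are the same computation organized slightly differently.
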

\begin{proof}
It suffices to show that $x^{i+j}$ and $x^i-x^j$ commute with $y$. By  
definition of $R(A;i,j,1,1)$ we get
\[
y(x^{i+j}) = (yx^i)x^j = (1-x^jy)x^j 
= x^j-x^jyx^j = x^j-x^j(1-x^iy) = (x^{i+j})y,
\]
and 
\[
y(x^i-x^j) = yx^i-yx^j = (1-x^jy)-(1-x^iy) = (x^i-x^j)y.
\]
Hence, $x^{i+j}$ and $x^i-x^j$ commute with both $x$ and $y$ and therefore
with each element in $R(A;i,j,1,1)$.
\end{proof}
\begin{lemma}
\label{lmm:yxd}
If $\gcd(i,j)=d$, then there exists polynomials $p(x),q(x)\in A[x]$, 
both alternating sums of powers of $x$, such that 
$yx^d=p(x)+q(x)y$ in $R(A;i,j,1,1)$.
\end{lemma}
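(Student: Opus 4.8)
The plan is to reduce the whole computation, via the cyclic relation of Corollary~\ref{cor:rootofunity}, to a single instance of Lemma~\ref{lmm:injn}. Throughout assume $i>j$; if $i=j$ then $d=i=j$ and $yx^d=yx^i=1-x^jy$ already has the required form, so there is nothing to prove. Write $N=(i^2-j^2)/d$ and $\epsilon=(-1)^{(i+j)/d}\in\{1,-1\}$, so that $x^N=\epsilon$ by Corollary~\ref{cor:rootofunity}; note that $d\mid N$ since $d^2\mid(i-j)(i+j)$. I would first record a clean ``push $y$ past a power of $x$'' identity: for all integers $n,n'\geq0$ not both zero,
\[
yx^{in+jn'}=(-1)^{n+n'}x^{jn+in'}y+R_{n,n'}(x),
\]
where $R_{n,n'}(x)\in A[x]$ is an alternating sum of powers of $x$ (possibly the zero polynomial). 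I would obtain this by writing $yx^{in+jn'}=(yx^{in})x^{jn'}$, substituting (\ref{eqn:a}) for $yx^{in}$, and then substituting (\ref{eqn:b}) for the $yx^{jn'}$ that appears. The two resulting polynomial sums both have base exponent $(n+n'-1)j$ and common difference $i-j$, and on the overlapping range of indices their coefficients are $(-1)^{n+k}$ and $(-1)^{n-1-k}$, which cancel because $(-1)^{n-1-k}=-(-1)^{n+k}$; what survives is a single alternating tail, namely $R_{n,n'}$. (Equivalently: the set of exponents $e$ for which $yx^e$ has the shape $p(x)+q(x)y$ with $p,q\in A[x]$ contains $i$ and $j$ by Lemma~\ref{lmm:injn} and is closed under addition, so it contains every $in+jn'$; the displayed identity additionally tracks the alternating structure, which is what the lemma asks for.)

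The second step is purely arithmetic: I claim there exist $n,n',k\geq0$ with $in+jn'=d+kN$. Since $d\mid N$ we have $\gcd(i,j,N)=\gcd(d,N)=d$, so in $\ints/N\ints$ the subgroup generated by the images of $i$ and $j$ is $\langle d\rangle$ and in particular contains $d$. As $n$ runs over $\{0,1,2,\dots\}$ the residues $ni\bmod N$ already exhaust the cyclic subgroup $\langle i\rangle\subseteq\ints/N\ints$ (they are periodic and include $0$ at $n=0$), and likewise for $j$; hence $\{\,in+jn'\bmod N : n,n'\geq0\,\}=\langle i\rangle+\langle j\rangle=\langle d\rangle\ni d$. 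Thus $in+jn'\equiv d\pmod N$ for some $n,n'\geq0$, i.e. $in+jn'=d+kN$ for some integer $k$; replacing $n$ by $n+\lambda\,N/\gcd(i,N)$ for large $\lambda$ keeps $n\geq0$ and the congruence while increasing $k$ by a positive multiple of $i/\gcd(i,N)$, so we may take $k\geq0$. Since then $in+jn'=d+kN\geq1$, the pair $(n,n')$ is not both zero and the identity of the first step applies.

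Finally I would combine the two steps. With $n,n',k$ as above, $x^{in+jn'}=x^{d}x^{kN}=x^{d}\epsilon^{k}=\epsilon^{k}x^{d}$, so the displayed identity becomes
\[
\epsilon^{k}\,yx^{d}=(-1)^{n+n'}x^{jn+in'}y+R_{n,n'}(x),
\]
and multiplying through by $\epsilon^{k}$ and using $\epsilon^{2}=1$ gives $yx^{d}=\bigl(\epsilon^{k}R_{n,n'}(x)\bigr)+\bigl(\epsilon^{k}(-1)^{n+n'}x^{jn+in'}\bigr)y$. Here $q(x)=\pm x^{jn+in'}$ is a signed monomial, hence trivially an alternating sum, and $p(x)=\pm R_{n,n'}(x)$ is $\pm1$ times an alternating sum, hence again an alternating sum; if one prefers the exponents reduced below $N$, one can afterwards replace each $x^{e}$ by $\epsilon^{\lfloor e/N\rfloor}x^{\,e\bmod N}$.

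I expect the only real friction to be the bookkeeping in the first step: verifying the coefficient cancellation and confirming that the surviving tail is genuinely alternating, with care for the degenerate cases ($n=n'$ gives $R_{n,n'}=0$; $n'=0$ must reproduce (\ref{eqn:a})), and reading ``alternating sum of powers'' so as to allow either leading sign and the empty sum. The number-theoretic and root-of-unity steps are then routine.
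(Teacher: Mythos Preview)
Your proof is correct but follows a genuinely different route from the paper's. The paper exploits Proposition~\ref{prp:centerR} (centrality of $x^{i+j}$): writing $d=nj-m(i+j)$ with $m,n\in\nats$, one has $yx^{nj}=yx^{m(i+j)+d}=x^{m(i+j)}\,yx^d$ since $x^{i+j}$ is central, then a single application of (\ref{eqn:b}) gives the right-hand side, and finally one multiplies by the power of $x$ inverse to $x^{m(i+j)}$ via Corollary~\ref{cor:rootofunity}. Your argument instead bypasses centrality entirely: you combine (\ref{eqn:a}) and (\ref{eqn:b}) into a single ``push-past'' identity $yx^{in+jn'}=(-1)^{n+n'}x^{jn+in'}y+R_{n,n'}(x)$, verify the alternating cancellation on the overlap, and then use Corollary~\ref{cor:rootofunity} to reduce the exponent modulo $N=(i^2-j^2)/d$.

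The paper's path is shorter because the centrality of $x^{i+j}$ absorbs what in your version becomes the combinatorial cancellation step. Your path, on the other hand, is self-contained relative to Lemma~\ref{lmm:injn} and Corollary~\ref{cor:rootofunity} and yields the pleasant intermediate formula $yx^{in+jn'}=(-1)^{n+n'}x^{jn+in'}y+R_{n,n'}(x)$, which subsumes both (\ref{eqn:a}) and (\ref{eqn:b}) and might be reusable. Your cancellation check is correct: both sums have exponents $(n+n'-1)j+k(i-j)$ with coefficients $(-1)^{n+k}$ and $(-1)^{n-1-k}=-(-1)^{n+k}$ on the common range, leaving an alternating tail. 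The arithmetic step ($in+jn'\equiv d\pmod N$ with $n,n'\geq 0$) and the final sign bookkeeping are fine as written.
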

\begin{proof}
If $i=j$, then $\gcd(i,j)=i$ and we have $yx^i=1-x^iy$.

Now suppose $i\neq j$ and let $d=\gcd(i,j)$.  Since $d=\gcd(i+j,j)$, 
there exist $m,n\in{\nats}$ such that $d=nj-m(i+j)$,  and so 
$yx^{nj}=yx^{m(i+j)+d}$.  By Proposition~\ref{prp:centerR}, $x^{i+j}$ is in the 
center of $R(A;i,j,1,1)$ and so by Lemma~\ref{lmm:injn} we obtain 
\begin{align}
\label{eqn:c}
x^{m(i+j)}yx^d=yx^{m(i+j)+d}=yx^{nj}=(-1)^nx^{in}y
+\sum_{k=0}^{n-1}(-1)^kx^{(n-1)j+k(i-j)}. 
\end{align}  
Now, by Corollary~\ref{cor:rootofunity}, 
the inverse of $x$ is a power of $x$ and 
hence $x^rx^{m(i+j)}=1$ for some $r$. Therefore, by (\ref{eqn:c}) we have 
\[
yx^d=(-1)^nx^{in+r}y+\sum_{k=0}^{n-1}(-1)^kx^{(n-1)j+k(i-j)+r}.
\] 
\end{proof}
\begin{theorem}
\label{thm:R=fin-gen}
If $i>j$ and $\gcd(i,j)=1$, then $R(A;i,j,1,1)$ is a finitely generated 
$A$-module with a generating set of cardinality at most $2(i+j-1)(i-j)$.
\end{theorem}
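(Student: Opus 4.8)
The plan is to exhibit an explicit finite $A$-spanning set, namely the $2(i+j-1)(i-j)$ elements $x^a$ and $x^ay$ with $0\le a<(i+j-1)(i-j)$. This reduces to two claims: (a) every element of $R(A;i,j,1,1)$ can be put in the flattened form $f(x)+g(x)y$ with $f,g\in A[x]$; and (b) inside $R(A;i,j,1,1)$ the subring $A[x]$ is already spanned over $A$ by $1,x,\dots,x^{(i+j-1)(i-j)-1}$. Granting both, each $f(x)$ and $g(x)$ in (a) rewrites as an $A$-combination of the listed powers, so the $2(i+j-1)(i-j)$ displayed elements span $R(A;i,j,1,1)$, which is the assertion.

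For (a), the place where $\gcd(i,j)=1$ is essential is that it lets us commute $y$ past a single $x$: Lemma~\ref{lmm:yxd} with $d=1$ supplies $p,q\in A[x]$ with $yx=p(x)+q(x)y$, and then an easy induction on $a$ via $yx^{a+1}=(yx^a)x$ gives $yx^a=P_a(x)+q(x)^ay$ for suitable $P_a\in A[x]$. Since $R(A;i,j,1,1)$ is a homomorphic image of $A\langle x,y\rangle$, it is spanned over $A$ by words in $x,y$, and $y^2=0$ lets us discard every word containing $yy$, so it is spanned by words $x^{a_0}yx^{a_1}y\cdots yx^{a_k}$. I would then induct on the number $k$ of occurrences of $y$: for $k=0$ the word is a power of $x$; for $k\ge 1$, write the word as $u\cdot yx^{a}$ where $u$ carries only $k-1$ occurrences of $y$, so by the inductive hypothesis $u=f(x)+g(x)y$, whence $u\,yx^{a}=f(x)yx^{a}+g(x)y^{2}x^{a}=f(x)\bigl(P_{a}(x)+q(x)^{a}y\bigr)$ is again of the desired form. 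This proves (a).

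For (b), Corollary~\ref{cor:rootofunity} with $d=1$ gives $x^{(i+j)(i-j)}=(-1)^{i+j}$, so every power of $x$ lies in the $A$-span of $1,x,\dots,x^{(i+j)(i-j)-1}$; it remains to trim the top $(i-j)$ powers. Here Theorem~\ref{thm:cyclic} (applicable since $i>j$ and $\gcd(i,j)=1$) writes $x^{(i+j-1)(i-j)}$ as an $A$-combination of the powers $x^{k(i-j)}$ with $0\le k\le i+j-2$, whose exponents are all below $(i+j-1)(i-j)$; multiplying that relation by $x,x^2,\dots,x^{i-j-1}$ and tracking the largest exponent that appears shows that each of $x^{(i+j-1)(i-j)},\dots,x^{(i+j)(i-j)-1}$ also lies in the span of $1,x,\dots,x^{(i+j-1)(i-j)-1}$, a set of the required cardinality $(i+j-1)(i-j)$. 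The only real content is the bookkeeping in (a) — ensuring the induction on the number of $y$'s never leaves $A[x]$ — since once Lemma~\ref{lmm:yxd}, Theorem~\ref{thm:cyclic}, and Corollary~\ref{cor:rootofunity} are invoked, step (b) is a short exponent count; the hypothesis $\gcd(i,j)=1$ enters decisively exactly once, through Lemma~\ref{lmm:yxd}, which is what makes $yx$ (and not merely $yx^{d}$) flattenable.
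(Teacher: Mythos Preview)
Your argument is correct and follows the same strategy as the paper: use Lemma~\ref{lmm:yxd} (with $d=1$) to flatten every word into the form $f(x)+g(x)y$, then use Theorem~\ref{thm:cyclic} to bound the needed powers of $x$ by $(i+j-1)(i-j)$. Your write-up is simply more explicit about the induction on the number of $y$'s; the one superfluous step is the appeal to Corollary~\ref{cor:rootofunity}, since the relation of Theorem~\ref{thm:cyclic} already serves as a reduction rule expressing $x^{n}$ for every $n\ge(i+j-1)(i-j)$ in terms of strictly lower powers, making the preliminary bound of $(i+j)(i-j)$ unnecessary.
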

\begin{proof}
Since $\gcd(i,j)=1$, by Theorem~\ref{thm:cyclic} and Lemma~\ref{lmm:yxd}, 
we have 
relations which work as reductions for $x^n$ and $yx$ respectively, 
for $n\geq(i+j-1)(i-j)$. Using these reductions, we can write every 
monomial/word of $x$ and $y$ in $R(A;i,j,1,1)$ as an $A$-linear 
combination of elements from 
$\{1,x,x^2,\dots,x^{(i+j-1)(i-j)-1},y,xy,\dots, x^{(i+j-1)(i-j)-1}y\}$. 
Hence, $R(A;i,j,1,1)$ is a finitely-generated $A$-module with 
generating set of cardinality at most $2(i+j-1)(i-j)$.
\end{proof}

\subsection{Matrix Descriptions}
\label{subsec:descr}

In this subsection we will obtain a complete description of the 
$A$-algebra $R(A;i,j,1,1)$ when $\gcd(i,j)=1$. Letting $n=2$, $a=x^i$, 
$b=x^j$, and $f=y$, then $\{E_{hk} : 1\leq h,k\leq 2\}$, where 
$E_{hk}=y^{h-1}x^iyx^{j(k-1)}$, forms a set of $2\times 2$ matrix units
by Theorem~1.3 of~\cite{Geira} and so we have the following.
\begin{observation}
\label{obs:2x2}
There exists an $A$-algebra $L$ such that $R(A;i,j,1,1)\cong M_2(L)$.
\end{observation}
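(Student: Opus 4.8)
The plan is to invoke Theorem~\ref{thm:three} with $m=n=1$. Setting $a = x^i$, $b = x^j$, and $f = y$, the two defining relations of $R(A;i,j,1,1)$ become exactly $af^m + f^n b = x^i y + y x^j = 1$ and $f^{m+n} = y^2 = 0$, so $R(A;i,j,1,1)$ is a complete $2 \times 2$ matrix ring; concretely, the elements $E_{hk} = y^{h-1}x^i y x^{j(k-1)}$ for $1 \le h,k \le 2$ are $2 \times 2$ matrix units. If one prefers to see this directly rather than cite~\cite{Geira}, observe that $E_{11} = x^i y$, $E_{12} = x^i y x^j$, $E_{21} = y$, and $E_{22} = y x^j$ (the last two using $yx^iy = y$ from Lemma~\ref{lmm:identity}), so that $E_{11} + E_{22} = x^i y + y x^j = 1$, and each relation $E_{hk}E_{\ell p} = \delta_{k\ell}E_{hp}$ collapses after expansion to $yx^iy = yx^jy = y$ or to $y^2 = 0$; for instance $E_{21}E_{12} = y\cdot x^i y x^j = (yx^iy)x^j = yx^j = E_{22}$, $E_{22}E_{21} = yx^j\cdot y = y = E_{21}$, and $E_{11}E_{21} = x^iy\cdot y = 0$. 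Theorem~\ref{thm:matrix-units} then produces a ring isomorphism $R(A;i,j,1,1)\cong M_2(L)$, where
\[
L = \bigl\{\, E_{11}rE_{11} + E_{21}rE_{12} \;:\; r \in R(A;i,j,1,1) \,\bigr\}
\]
is the subring of $R(A;i,j,1,1)$ singled out in that theorem.

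It remains to equip $L$ with the structure of an $A$-algebra in such a way that the above isomorphism is $A$-linear. The structure homomorphism $A \to R(A;i,j,1,1)$ has central image, so every $a \in A$ commutes with all of the matrix units $E_{hk}$; since $L$ is precisely the centralizer of the matrix units in $R(A;i,j,1,1)$ (equivalently, the subring of scalar matrices under the isomorphism), the composite $A \to R(A;i,j,1,1)$ factors as $A \to L \hookrightarrow R(A;i,j,1,1)$. The factored map $A \to L$ is unital, sending $1_A$ to $1_R = E_{11}+E_{22}$, which is the identity of $L$, and its image is central in $L$ because $A$ is central in the ambient ring; hence $L$ is an $A$-algebra. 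Under $R(A;i,j,1,1)\cong M_2(L)$ the element $a\cdot 1_R$ corresponds to the scalar matrix with diagonal entry the image of $a$ in $L$, so the two $A$-module structures agree and the isomorphism is an isomorphism of $A$-algebras.

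I do not anticipate a real obstacle. The statement is essentially a direct application of Theorem~\ref{thm:three} (Theorem~1.3 of~\cite{Geira}), and the only genuine content is the routine verification that the $E_{hk}$ satisfy the matrix-unit identities, which Lemma~\ref{lmm:identity} was designed to supply, together with the bookkeeping that the ambient central $A$-action restricts to a central $A$-action on the corner ring $L$ and that the resulting isomorphism is therefore $A$-linear. The one point worth flagging is that $L$ is determined only up to isomorphism, since it depends on the chosen system of matrix units; this is exactly why the conclusion is phrased existentially, and pinning down this $L$ explicitly when $\gcd(i,j)=1$ is the business of the rest of Section~\ref{subsec:descr}.
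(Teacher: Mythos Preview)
Your proposal is correct and follows essentially the same route as the paper: you invoke Theorem~\ref{thm:three} (Theorem~1.3 of~\cite{Geira}) with $a=x^i$, $b=x^j$, $f=y$ and exhibit the very same matrix units $E_{hk}=y^{h-1}x^iyx^{j(k-1)}$ that the paper writes down just before the observation. Your added verification of a few of the matrix-unit identities via Lemma~\ref{lmm:identity} and your explicit check that the central $A$-action restricts to $L$ are welcome elaborations the paper leaves implicit, but the underlying argument is identical.
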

If we let $e_{hk}=E_{(3-h)(3-k)}$ for each $h$ and $k$, then 
it is easy to verify the $2^4 + 1 = 17$ relations from 
Theorem~\ref{thm:matrix-units} to show that $\{e_{hk}:1\leq h,k\leq 2\}$ 
also forms a complete set of $2\times 2$ 
matrix units, where $e_{11}=yx^j$, $e_{12}=y$, $e_{21}=x^iyx^j$, and 
$e_{22}=x^iy$. 

By Observation~\ref{obs:2x2} and the $2\times 2$ matrix units 
$\{e_{hk}:1\leq h,k\leq 2\}$, we have an isomorphism 
$\phi:R(A;i,j,1,1)\rightarrow M_2(L)$.  Identifying $R(A;i,j,1,1)$ 
with $M_2(L)$ via $\phi$, we have 
$yx^j= \begin{psmallmatrix} 1 &  0 \\ 0 & 0 \end{psmallmatrix}$
$y= \begin{psmallmatrix} 0 &  1 \\ 0 & 0 \end{psmallmatrix}$,
$x^iyx^j= \begin{psmallmatrix} 0 &  0 \\ 1 & 0 \end{psmallmatrix}$ and
$x^iy= \begin{psmallmatrix} 0 &  0 \\ 0 & 1 \end{psmallmatrix}$
since $yx^j=e_{11}$, $y=e_{12}$, $x^iyx^j=e_{21}$, and $x^iy=e_{22}$. 
Hence, we will now and for the rest of this
subsection view $R(A;i,j,1,1)$ as the matrix ring $M_2(L)$.
Letting 
$x^j= \begin{psmallmatrix} a &  b \\ c & d \end{psmallmatrix}$
and
$x^i= \begin{psmallmatrix} p &  q \\ s & t \end{psmallmatrix}$ 
for some $a,b,c,d,p,q,s,t\in L$, we get from $y\cdot x^i = yx^i$ 
and $x^i\cdot y = x^iy$ the following matrix equations
$\begin{psmallmatrix} 0 &  1 \\ 0 & 0 \end{psmallmatrix}
\begin{psmallmatrix} a &  b \\ c & d \end{psmallmatrix}
=\begin{psmallmatrix} 1 &  0 \\ 0 & 0 \end{psmallmatrix}$
and
$\begin{psmallmatrix} p & q \\ s & t \end{psmallmatrix}
\begin{psmallmatrix} 0 & 1 \\ 0 & 0 \end{psmallmatrix}
=\begin{psmallmatrix} 0 &  0 \\ 0 & 1 \end{psmallmatrix}$,
and so $d=p=0$ and $c=s=1$. Since $x^jx^i=x^ix^j$, we also obtain 
$\begin{psmallmatrix} a &  b \\ 1 & 0 \end{psmallmatrix}
\begin{psmallmatrix} 0 &  q \\ 1 & t \end{psmallmatrix}
=\begin{psmallmatrix} 0 &  q \\ 1 & t \end{psmallmatrix}
\begin{psmallmatrix} a &  b \\ 1 & 0 \end{psmallmatrix}$
and hence $q=b$, $t=-a$, $aq=-bt$, and so $ab=ba$. Further, 
\begin{equation}
\label{eqn:bs-at}
x^{i+j} = \begin{pmatrix} b &  0 \\ 0 & b \end{pmatrix}
\mbox{ and } 
x^j-x^i = \begin{pmatrix} a &  0 \\ 0 & a \end{pmatrix},
\end{equation}
and so by Proposition~\ref{prp:centerR} we have the following.
\begin{claim}
\label{clm:abcenter}
Here $a,b\in L$ are in the center of $L$ and thus 
$A[a,b]\subseteq L$.
\end{claim}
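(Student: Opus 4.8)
The plan is to show that $a$ and $b$ commute with every element of $L$ by exploiting the fact, already established in~(\ref{eqn:bs-at}), that $x^{i+j}$ and $x^j - x^i$ are the scalar matrices $\mathrm{diag}(b,b)$ and $\mathrm{diag}(a,a)$ respectively. First I would recall that under the identification $R(A;i,j,1,1) \cong M_2(L)$ coming from the matrix units $\{e_{hk}\}$, the subring $L$ sits inside $M_2(L)$ as the scalar matrices $\mathrm{diag}(\lambda,\lambda)$ for $\lambda \in L$ — equivalently, $L$ is the centralizer of the matrix units, and an element of $M_2(L)$ is scalar precisely when it commutes with $e_{12}$ and $e_{21}$.

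Next I would invoke Proposition~\ref{prp:centerR}, which says $x^{i+j}$ and $x^i - x^j$ lie in the center of $R(A;i,j,1,1)$. Since the center of $M_2(L)$ consists exactly of the scalar matrices $\mathrm{diag}(\zeta,\zeta)$ with $\zeta$ in the center $Z(L)$ of $L$, and by~(\ref{eqn:bs-at}) we have $x^{i+j} = \mathrm{diag}(b,b)$ and $x^j - x^i = \mathrm{diag}(a,a)$, it follows immediately that $a, b \in Z(L)$. Consequently the commutative subring $A[a,b]$ generated over $A$ by $a$ and $b$ is contained in $L$ (note $A \subseteq L$ since $L$ is an $A$-algebra), which is exactly the claim.

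The one point that needs a clean justification is the standard fact that $Z(M_2(L)) = \{\mathrm{diag}(\zeta,\zeta) : \zeta \in Z(L)\}$; this is routine — a matrix central in $M_2(L)$ must in particular commute with the (scalar images of the) matrix units $e_{12}$ and $e_{21}$, which forces it to be diagonal with equal diagonal entries, and then commuting with $\mathrm{diag}(\lambda,\lambda)$ for all $\lambda \in L$ forces that common entry into $Z(L)$. I would state this in one sentence rather than belabor it. I anticipate no real obstacle here: the hard work was already done in establishing~(\ref{eqn:bs-at}) and Proposition~\ref{prp:centerR}, and this claim is essentially the bookkeeping observation that centrality of an element of $M_2(L)$ that happens to be a scalar matrix descends to centrality of the scalar in $L$.
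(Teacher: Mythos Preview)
Your proposal is correct and follows exactly the approach the paper intends: the paper states the claim as an immediate consequence of (\ref{eqn:bs-at}) together with Proposition~\ref{prp:centerR}, and you have simply spelled out the one-line justification that a central element of $M_2(L)$ which is already a scalar matrix $\mathrm{diag}(\lambda,\lambda)$ must have $\lambda\in Z(L)$. There is nothing to add.
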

Suppose now $\gcd(i,j) = 1$. If $i > j$, then since $x$ is invertible 
by Theorem~\ref{lmm:invert}, then so is $x^{i+j}$ and also $b$
by Corollary~\ref{cor:rootofunity} in $A[a,b]$.
Further, there are $\alpha, \beta\in {\nats}_0$ such that 
$1=\alpha j-\beta i$ and so 
\[
x=x^{\alpha j-\beta i}
=\begin{pmatrix} a &  b \\ 1 & 0 \end{pmatrix}^\alpha
\begin{pmatrix} 0 &  b \\ 1 & -a \end{pmatrix}^{-\beta}
=\begin{pmatrix} a &  b \\ 1 & 0 \end{pmatrix}^\alpha
\begin{pmatrix} a &  b \\ 1 & 0 \end{pmatrix}^\beta\frac{1}{b^\beta}
\in M_2(A[a,b]).
\]
If $i=j$, then $i=j=1$ and hence  
$x=\begin{psmallmatrix} a &  b \\ 1 & 0 \end{psmallmatrix}
=\begin{psmallmatrix} 0 &  b \\ 1 & -a \end{psmallmatrix}$
and so $a=0$ and 
$x=\begin{psmallmatrix} 0 &  b \\ 1 & 0 \end{psmallmatrix}\in M_2(A[a,b])$.
Therefore, if $\gcd(i,j)=1$, then in the isomorphism 
$R(A;i,j,1,1)\cong M_2(L)$ we see that both $x$ and $y$ 
are mapped to $M_2(A[a,b])$ and so all of $R(A;i,j,1,1)$  is mapped
to $M_2(A[a,b])$. Therefore $M_2(L) = M_2(A[a,b])$ and 
so $L=A[a,b]$. This is summed up in the following proposition.
\begin{proposition}
\label{prp:L-comm}
If $a$ and $b$ are as in Claim~\ref{clm:abcenter}, $\gcd(i,j)=1$ and $i>j$ 
then $R(A;i,j,1,1)\cong M_2(A[a,b])$ as $A$-algebras. 
In particular, $L$ from Observation~\ref{obs:2x2} is commutative.
\end{proposition}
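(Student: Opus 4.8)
The plan is to make the identification $R(A;i,j,1,1)\cong M_2(L)$ from Observation~\ref{obs:2x2} completely explicit and then to show that the ``scalar'' ring $L$ cannot be larger than $A[a,b]$. Using the matrix units $e_{11}=yx^j$, $e_{12}=y$, $e_{21}=x^iyx^j$, $e_{22}=x^iy$, we regard $R(A;i,j,1,1)$ as $M_2(L)$ with these elements becoming the standard matrix units, and we write $x^j=\begin{psmallmatrix} a & b \\ c & d\end{psmallmatrix}$ and $x^i=\begin{psmallmatrix} p & q \\ s & t\end{psmallmatrix}$ with entries in $L$. The identities $y\cdot x^i=yx^i=e_{11}$ and $x^i\cdot y=x^iy=e_{22}$ force $d=p=0$ and $c=s=1$, while $x^ix^j=x^jx^i$ forces $q=b$, $t=-a$, $ab=ba$; a one-line computation then yields $x^{i+j}=bI_2$ and $x^j-x^i=aI_2$, so Proposition~\ref{prp:centerR} puts $a$ and $b$ in the center of $M_2(L)$, hence of $L$. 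This is Claim~\ref{clm:abcenter}, and combined with $A[a,b]\subseteq L$ it gives the inclusion $M_2(A[a,b])\subseteq M_2(L)=R(A;i,j,1,1)$.

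For the reverse inclusion it suffices to show that the $A$-algebra generators $x$ and $y$ of $R(A;i,j,1,1)$ already lie in $M_2(A[a,b])$, since then so does the whole algebra they generate, which forces $M_2(L)=M_2(A[a,b])$ and hence $L=A[a,b]$. Now $y=e_{12}$ lies in $M_2(A[a,b])$ trivially, so the content is to place $x$ there. Since $i>j$ and $\gcd(i,j)=1$, Theorem~\ref{lmm:invert} makes $x$ invertible in $M_2(L)$, hence $x^{i+j}=bI_2$ is invertible and $b$ is a unit of $L$; crucially, Corollary~\ref{cor:rootofunity} with $d=1$ gives $x^{i^2-j^2}=(x^{i+j})^{i-j}=(-1)^{i+j}$, so $b^{-1}=(-1)^{i+j}b^{i-j-1}\in A[a,b]$. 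Choosing $\alpha,\beta\in{\nats}_0$ with $1=\alpha j-\beta i$ (possible as $\gcd(i,j)=1$), and noting that $x^ix^j=x^{i+j}=bI_2$ gives $(x^i)^{-1}=b^{-1}x^j$, we obtain
\[
x = x^{\alpha j-\beta i} = (x^j)^{\alpha}(x^i)^{-\beta} = b^{-\beta}(x^j)^{\alpha+\beta}.
\]
Since $x^j=\begin{psmallmatrix} a & b \\ 1 & 0\end{psmallmatrix}$ has entries in $A[a,b]$ and $b^{-1}\in A[a,b]$, this shows $x\in M_2(A[a,b])$, completing the argument.

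The crux, as I see it, is the last step: it is not enough that $b$ be \emph{some} unit of $L$ --- one needs $b^{-1}$ to lie in the small ring $A[a,b]$, which is exactly what Corollary~\ref{cor:rootofunity} buys by exhibiting $x$, and hence $x^{i+j}=bI_2$, as a root of unity whose inverse is a genuine power of itself. The remaining pieces --- extracting the matrix entries, checking centrality, and the B\'ezout bookkeeping --- are routine given the relations already established. (The hypothesis $i>j$ rules out the degenerate case $i=j$; that case would force $i=j=1$, $a=0$, and $x=\begin{psmallmatrix} 0 & b \\ 1 & 0\end{psmallmatrix}\in M_2(A[a,b])$, so the same conclusion would hold there too.)
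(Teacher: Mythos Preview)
Your argument is correct and follows essentially the same route as the paper: identify $R(A;i,j,1,1)$ with $M_2(L)$ via the matrix units, pin down $x^j=\begin{psmallmatrix} a & b\\ 1 & 0\end{psmallmatrix}$ and $x^i=\begin{psmallmatrix} 0 & b\\ 1 & -a\end{psmallmatrix}$, use Corollary~\ref{cor:rootofunity} to see that $b^{-1}\in A[a,b]$, and then write $x=b^{-\beta}(x^j)^{\alpha+\beta}$ via a B\'ezout relation to conclude $x\in M_2(A[a,b])$. One small slip: you wrote ``$y\cdot x^i=yx^i=e_{11}$,'' but $e_{11}=yx^j$, not $yx^i$ (indeed $yx^i=\begin{psmallmatrix}1 & -a\\ 0 & 0\end{psmallmatrix}$ in general); the conclusions $c=s=1$, $d=p=0$ you draw are nonetheless exactly those coming from the correct identities $y\cdot x^j=e_{11}$ and $x^i\cdot y=e_{22}$, so this is only a labeling typo.
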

Note that to obtain Proposition~\ref{prp:L-comm} we
use the fact that the {\em equality} $M_2(L) = M_2(A[a,b])$
implies $L = A[a,b]$.
\comment{ %% XXX
Namely, we always have the following.
\begin{observation}
If $M_n(A)\cong M_n(B)$ for commutative rings (algebras) $A$ and $B$ 
where $n\geq 1$, then $A\cong B$ as rings (algebras).
\end{observation}
\begin{proof}
Looking at the centers of $M_n(A)$ and $M_n(B)$ we get
$A\cong Z\left(M_n(A)\right)\cong Z\left(M_n(B)\right)\cong B$.
\end{proof}
}
If the equality in $M_2(L) = M_2(A[a,b])$ is replaced by an isomorphism,
we cannot draw the same conclusions: In the papers~\cite{smith} 
and~\cite{nonisomorphic} examples of non-isomorphic, non-commutative 
rings $A$ and $B$ are given such that $M_n(A)\cong M_n(B)$.
In fact, it is shown in~\cite{nonisomorphic} that there is an uncountable 
family of pairwise non-isomorphic rings $\{S_\alpha\}$ such that 
$M_2(S_\alpha)\cong M_2(S_\beta)$. Further, all $S_\alpha$ 
are Noetherian domains that are finitely-generated over their centers.

The rest of this subsection is devoted to the description 
of the commutative $A$-algebra $L = A[a,b]$ when we have 
$\gcd(i,j)=1$.  Under this 
assumption we get by Lemma~\ref{lmm:yxd} a 
commuting reduction rule for $yx$ in 
$R(A;i,j,1,1)$.  For this analysis we begin with a definition.
\begin{definition}
\label{def:frec}
Let $A[s,t]$ be the polynomial ring in two variables $s$ and $t$ over $A$ 
and let $f:{\nats}_0\rightarrow A[s,t]$ be defined recursively in the 
following way: $f(0)=0$, $f(1)=1$, and $f(n)=tf(n-1)+sf(n-2)$ for $n\geq 2$.
\end{definition}
The following is easily obtained by induction on $n$.
\begin{lemma}
\label{lmm:matrix}
For $n\geq 1$, we have 
$\begin{psmallmatrix} t & s\\ 1 & 0 \end{psmallmatrix}^n
=\begin{psmallmatrix} f(n+1) & sf(n)\\ f(n) & sf(n-1) \end{psmallmatrix}$.
\end{lemma}
The main result of this section is the following theorem,
in which a complete description of the algebra $A[a,b]$ from
Proposition~\ref{prp:L-comm} is given.
\begin{theorem}
\label{thm:1st-main}
Let $f$ be as in Definition~\ref{def:frec}. If $\gcd(i,j)=1$, then 
$R(A;i,j,1,1)\cong M_2(A[s,t]/I)$ where 
\[
I=\left( f(i+j), f(i+j-1)-s^{j-1}, s^{i-j}-(-1)^{i-j}\right)
\]
and $A[s,t]$ is the polynomial ring in two indeterminates $s$ and $t$ over $A$.
\end{theorem}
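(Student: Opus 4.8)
The plan is to pin down the commutative $A$-algebra $L$ of Proposition~\ref{prp:L-comm} by exhibiting mutually inverse surjections between $L$ and $A[s,t]/I$. Throughout I work inside the identification $R(A;i,j,1,1)\cong M_2(L)$ of Proposition~\ref{prp:L-comm}, under which $y\mapsto\begin{psmallmatrix}0&1\\0&0\end{psmallmatrix}$, $x^j\mapsto\begin{psmallmatrix}a&b\\1&0\end{psmallmatrix}$ and $x^i\mapsto\begin{psmallmatrix}0&b\\1&-a\end{psmallmatrix}$, with $a,b$ central in $L$ and, when $i>j$, $b$ a unit by Corollary~\ref{cor:rootofunity}. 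By Theorem~\ref{thm:ijji} we may assume $i\ge j$; if $i=j$ then $i=j=1$, a case I dispose of at the end.

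\emph{Step 1: the three relations hold in $L$.} Let $\psi\colon A[s,t]\to L$ be the $A$-algebra surjection with $\psi(s)=b$, $\psi(t)=a$. Applying the ring homomorphism $M_2(\psi)$ to the identity of Lemma~\ref{lmm:matrix} gives $(x^j)^n=\begin{psmallmatrix}\psi(f(n+1))&b\,\psi(f(n))\\ \psi(f(n))&b\,\psi(f(n-1))\end{psmallmatrix}$ for all $n\ge1$. Since $x^{i+j}=bI$ is central by~(\ref{eqn:bs-at}), we have $(x^j)^{i+j}=x^{j(i+j)}=(x^{i+j})^j=b^jI$; comparing entries at $n=i+j$ and using that $b$ is a unit forces $\psi(f(i+j))=0$ and $\psi(f(i+j-1))=b^{j-1}$. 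Moreover Corollary~\ref{cor:rootofunity} gives $b^{i-j}I=(x^{i+j})^{i-j}=x^{i^2-j^2}=(-1)^{i-j}I$. Hence $\psi$ annihilates all three generators of $I$, and so induces a surjection $\bar\psi\colon A[s,t]/I\twoheadrightarrow L$.

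\emph{Step 2: a homomorphism back.} Put $B=A[s,t]/I$ and let $\bar s,\bar t$ be the images of $s,t$; then $\bar s$ is a unit in $B$, since $\bar s^{i-j}=(-1)^{i-j}$. As $\gcd(i,j)=1$, choose $\alpha,\beta\in{\nats}_0$ with $\alpha j-\beta i=1$, and set $N=\begin{psmallmatrix}\bar t&\bar s\\1&0\end{psmallmatrix}$, $X=\bar s^{-\beta}N^{\alpha+\beta}$, $Y=\begin{psmallmatrix}0&1\\0&0\end{psmallmatrix}$ in $M_2(B)$. Using Lemma~\ref{lmm:matrix} together with the three relations defining $I$ one computes $N^{i+j}=\bar s^jI$; then, since $(\alpha+\beta)j=1+\beta(i+j)$ and $(\alpha+\beta)i=\alpha(i+j)-1$, short manipulations yield $X^j=N$ and $X^i=\bar sN^{-1}=\begin{psmallmatrix}0&\bar s\\1&-\bar t\end{psmallmatrix}$. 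Hence $Y^2=0$ and $X^iY+YX^j=\begin{psmallmatrix}0&0\\0&1\end{psmallmatrix}+\begin{psmallmatrix}1&0\\0&0\end{psmallmatrix}=I$, so by the universal property of $R(A;i,j,1,1)$ there is an $A$-algebra homomorphism $\theta\colon R(A;i,j,1,1)\to M_2(B)$ with $\theta(x)=X$, $\theta(y)=Y$. It is surjective: $Y$, $X^j=N$ and $X^i$ lie in its image, whence so do all four matrix units (e.g.\ $YX^j$, $X^iY$, $X^iYX^j$) and then the central scalars $\bar sI$, $\bar tI$, and these generate $M_2(B)$.

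\emph{Step 3: conclusion, and where the difficulty lies.} Let $\phi\colon R(A;i,j,1,1)\xrightarrow{\ \sim\ }M_2(L)$ be the isomorphism of Proposition~\ref{prp:L-comm} and $\Pi=M_2(\bar\psi)$. I would verify that $\phi^{-1}\circ\Pi\circ\theta$ fixes the generators of $R(A;i,j,1,1)$: it sends $y$ to $\phi^{-1}\begin{psmallmatrix}0&1\\0&0\end{psmallmatrix}=y$, and, since $\Pi(N)=\begin{psmallmatrix}a&b\\1&0\end{psmallmatrix}=\phi(x^j)$ while $b^{-\beta}I=\phi(x^{-\beta(i+j)})$ (because $bI=\phi(x^{i+j})$ and $x$ is invertible), it sends $x$ to $\phi^{-1}\bigl(\phi(x^{-\beta(i+j)+j(\alpha+\beta)})\bigr)=\phi^{-1}(\phi(x))=x$, using $-\beta(i+j)+j(\alpha+\beta)=\alpha j-\beta i=1$. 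Therefore $\phi^{-1}\circ\Pi\circ\theta=\mathrm{id}$, so $\theta$ is injective; being surjective as well, $\theta$ is the desired isomorphism $R(A;i,j,1,1)\cong M_2(A[s,t]/I)$. In the remaining case $i=j=1$ one has $I=(t)$, hence $A[s,t]/I=A[s]$, and the same argument goes through with $\alpha=1$, $\beta=0$ (so $X=\begin{psmallmatrix}0&\bar s\\1&0\end{psmallmatrix}$ and $\bar s$ need not be inverted). I expect the real obstacle to be Steps~2 and~3: Step~1 alone gives only a surjection $A[s,t]/I\twoheadrightarrow L$, and the content of the theorem is that the three displayed relations actually \emph{present} $L$ --- which is precisely what the construction of $\theta$ and the identity $\Pi\circ\theta=\phi$ establish; a secondary nuisance is keeping the exponent arithmetic around $\alpha j-\beta i=1$ and the invertibility of $x^{i+j}$ straight.
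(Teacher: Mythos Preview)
Your argument is correct and, for the main case $i>j$, essentially mirrors the paper's proof: both construct the map $\theta\colon R(A;i,j,1,1)\to M_2(A[s,t]/I)$ via the matrices $X,Y$ and verify that $a,b\in L$ satisfy the three defining relations of $I$. The only difference is packaging: the paper concludes by arguing that the kernels of $A[s,t]\to L$ and $A[s,t]\to A[s,t]/I$ coincide, whereas you verify explicitly that $M_2(\bar\psi)\circ\theta=\phi$ and deduce that $\theta$ is injective. Your phrasing is arguably a little tidier, since it yields $R\cong M_2(A[s,t]/I)$ directly rather than passing through $L\cong A[s,t]/I$.

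The one place where you genuinely diverge is the case $i=j=1$. The paper treats this separately using Bergman's Diamond Lemma to establish a complete set of reductions and a free $A[s]$-basis $\{1,x,y,xy\}$, and only then identifies $R$ with $M_2(A[s])$ by a rank count. Your observation that the uniform argument still applies with $\alpha=1$, $\beta=0$ (so that no inverse of $\bar s$ is needed, and $\Pi(N)=\begin{psmallmatrix}0&b\\1&0\end{psmallmatrix}=\phi(x)$ on the nose) is a pleasant simplification: it dispenses with the Diamond Lemma entirely and treats both cases at once. One small remark: in Step~1 your appeal to $b$ being a unit is needed only for the relation $\psi(f(i+j-1))=b^{j-1}$, and only when $j\ge2$; when $i=j=1$ this relation reads $1=1$, so the case where $b$ may fail to be a unit causes no trouble.
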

Before beginning our proof, we discuss some interesting consequences of 
Proposition~\ref{prp:L-comm} and Theorem~\ref{thm:1st-main}.

First, for any ring $R$, the matrix ring $M_n(R)$ and $R$ are 
{\em Morita equivalent} (see~\cite{Morita}), meaning there is an 
equivalence of their modules in a categorical sense.  
By Theorem~\ref{thm:1st-main} we have in particular that 
{\em $R(A;i,j,1,1)$ is Morita equivalent to a commutative ring when 
$\gcd(i,j)=1$.}

For the second consequence, we recall that
a {\em polynomial identity ring} (or a {\em PI ring} for short)
$R$ is a ring such that 
there exists a polynomial in non-commuting indeterminates 
$p(x_1,x_2,\dots,x_n)\in R\langle x_1,x_2,\dots,x_n\rangle$ such that 
$p(r_1,r_2,\dots,r_n)=0$ for all $r_i\in R$. For example, any commutative 
ring $R$ is a PI ring since it satisfies 
the identity $xy-yx=0$. Similarly, any $2\times 2$ matrix ring over 
a commutative ring is also 
a PI ring since it satisfies the {\em Hall identity} $(xy-yx)^2z=z(xy-yx)^2$.
In fact, any $n\times n$ matrix ring over a commutative ring satisfies
the {\em Amitsur-Levitzki identity}~\cite{Rowen}  
\[
S_{2n}(x_1,x_2,\dots,x_{2n})
=\sum_{\pi\in\text{Sym}(2n)}\text{sgn}(\pi)x_{\pi(1)}x_{\pi(2)}\dots x_{\pi(2n)}=0.
\]
Therefore, by Theorem~\ref{thm:1st-main} $R(A;i,j,1,1)$ satisfies 
both the Hall identity and the Amitsur-Levitzki identity $S_4(x_1,x_2,x_3,x_4)$
and is therefore a PI ring when $\gcd(i,j)=1$.

We now delve into the proof of Theorem~\ref{thm:1st-main}.
\begin{proof}
[Theorem~\ref{thm:1st-main}] 
{\sc First case:} Suppose $i=j$, and so $i,j=1$. This case
is special, in great part since $L$ from Observation~\ref{obs:2x2} is here not
a finitely generated $A$-module, unlike the case when
$i>j$ (see Theorem~\ref{thm:R=fin-gen}.)

First we note that in this case $I=(f(2),f(1)-s^0,s^0-(-1)^0)=(t)$
and so $A[s,t]/I = A[s,t]/(t) = A[s]$, the polynomial ring over
$A$ in one indeterminate $s$.

In the $A$-algebra $R(A;1,1,1,1)$ we can show that 
$\{yx=1-xy, y^2=0\}$ forms a complete set of reductions 
under the {\em degree lexicographic order}, or {\em deglex order}
for short w.r.t. $x < y$ and hence $R(A;1,1,1,1)$ 
has a free $A$-module basis given by $\{1,x,x^2,\dots,y,xy,x^2y,\dots\}$.
To see that $R(A;1,1,1,1)$ is isomorphic to the $2\times 2$ matrix
algebra over $A[s]$, we introduce the variable $s:=x^2\in R(A;1,1,1,1)$
and so $R(A;1,1,1,1) = k\langle x,y,s : yx=1-xy, y^2=0, x^2=s\rangle$.  
This yields the following set of reductions 
$\{yx=1-xy, y^2=0, x^2=s\}$ under the deglex order $s < x < y$, 
which is not complete as in the Diamond Lemma by Bergman 
from~\cite{diamond}, since it contains two 
{\em overlap ambiguities} $yx^2$ and $x^3$. Resolving the first one we get
\[
yx^2 = (yx)x = (1-xy)x = x - xyx = x - x(1-xy) = x^2y = sy,
\]
on one hand, and $yx^2 = y(x^2) = ys$ on the other, and hence we
obtain a new relation $sy = ys$. Resolving the second one
we get $x^3 = x(x^2) = xs$ on one hand, and $x^3 = (x^2)x = sx$ on 
the other, and hence we obtain a new relation $sx = xs$. By adding
these two new relations to our system of deglex reductions we 
obtain a complete set of reductions $\{yx=1-xy, y^2=0, x^2=s, xs=sx, ys=sy\}$
under the deglex order. Therefore $s$ is in the center of $R(A;1,1,1,1)$ 
and so $R(A;1,1,1,1)$ is an $A[s]$-algebra.
By our complete set of reductions we see that $R(A;1,1,1,1)$ is 
a free $A[s]$-module with a basis consisting of $\{1,x,y,xy\}$ and hence
of rank $4$ over $A[s]$. Further, $R(A;1,1,1,1)$ has the set of 
$2\times 2$ matrix units $e_{11}=1-xy,\, e_{12}=y,\,e_{21}=x-sy,\, e_{22}=xy$,
which shows $R(A;1,1,1,1)\cong M_2(L)$ for some $A[s]$-algebra $L$.  
Since both $R(A;1,1,1,1)$ and $M_2(L)$ have rank $4$ over $A[s]$, we
have $T=A[s]$ and hence $R(A;1,1,1,1)\cong M_2(A[s])$ as $A[s]$-algebras.

{\sc Second case:} Suppose $i>j$. In this case
$L$ is a finitely generated $A$-module by Theorem~\ref{thm:R=fin-gen}.
Let $I=(f(i+j), f(i+j-1)-s^{j-1},s^{i-j}-(-1)^{i-j})$ be as stated in 
Theorem~\ref{thm:1st-main}.
We note that $s^{-1}$ exists in $A[s,t]/I$ and is given by 
$s^{-1}=(-1)^{i-j}s^{i-j-1}.$ Since $\gcd(i,j)=1$, there are  
$\alpha,\beta\in {\nats}_0$ such that $\alpha j-\beta i=1$. Let 
$X,Y \in M_2(A[s,t]/I)$ be given by 
$X=\frac{1}{s^{\beta}}\begin{psmallmatrix} t & s\\ 1 & 0 
\end{psmallmatrix}^{\alpha+\beta}$ and
$Y=\begin{psmallmatrix} 0 & 1\\ 0 & 0 \end{psmallmatrix}$.
By definition of $I$ we have
in $A[s,t]/I$ that $sf(i+j) = f(i+j) = 0$, $sf(i+j-1) = s\cdot s^{j-1} = s^j$
and $f(i+j+1) = tf(i+j) + sf(i+j-1) = s^j$ and hence in $M_2(A[s,t]/I)$
we have by Lemma~\ref{lmm:matrix} that 
\begin{equation}
\label{eqn:ts-j}
\begin{pmatrix}
t & s\\
1 & 0
\end{pmatrix}^{i+j}=
\begin{pmatrix}
s^j & 0 \\
0   & s^j
\end{pmatrix}
\end{equation}
and hence, since $\alpha j - \beta i = 1$ we have 
$(\alpha + \beta)j = 1 + \beta(i+j)$ and so we get from 
(\ref{eqn:ts-j}) that 
\[
X^j = 
\frac{1}{s^{\beta j}}
\begin{pmatrix}
t & s\\
1 & 0
\end{pmatrix}^{(\alpha +\beta)j} =  
\frac{1}{s^{\beta j}}
\left[
\begin{pmatrix}
t & s\\
1 & 0
\end{pmatrix}^{i+j}
\right]^{\beta} 
\begin{pmatrix}
t & s\\
1 & 0
\end{pmatrix} =  
\begin{pmatrix}
t & s\\
1 & 0
\end{pmatrix}.
\]
Similarly, since
$\begin{psmallmatrix} t & s\\ 1 & 0 \end{psmallmatrix}^{-1} = 
\frac{1}{s}\begin{psmallmatrix} 0 & s\\ 1 & -t \end{psmallmatrix}$ and
$(\alpha+\beta)i = \alpha(i+j) - 1$ we get
\[
X^i = 
\frac{1}{s^{\beta i}}
\begin{pmatrix}
t & s\\
1 & 0
\end{pmatrix}^{(\alpha +\beta)i} =  
\frac{1}{s^{\beta i}}
\begin{pmatrix}
s^{\alpha j} & 0\\
0 & s^{\alpha j}
\end{pmatrix}
\frac{1}{s}
\begin{pmatrix}
0 & s\\
1 & -t
\end{pmatrix} =
\begin{pmatrix}
0 & s\\
1 & -t
\end{pmatrix}.
\]
Therefore we have $X^iY+YX^j=I$ and $Y^2=0$ in $M_2(A[s,t]/I)$ and 
hence there is a well-defined $A$-algebra homomorphism 
$R(A;i,j,1,1) \rightarrow M_2(A[s,t]/I)$ with $x\mapsto X$ and 
$y\mapsto Y$. For this map we further have 
$x^{i+j}\mapsto \begin{psmallmatrix} s & 0 \\0 & s \end{psmallmatrix}$ and
$x^j-x^i\mapsto \begin{psmallmatrix} t & 0\\ 0 & t \end{psmallmatrix}$,
and so this homomorphism is a surjection. By (\ref{eqn:bs-at}) this
homomorphism induces an $A$-algebra surjection 
$L = A[a,b]\twoheadrightarrow A[s,t]/I$ where $b\mapsto s$ and $a\mapsto t$ 
since $L$ is commutative. 
This means, in particular, that $t$ and $s$ satisfy any equations
that $a$ and $b$ do in $L = A[a,b]$. It remains to show that $a$ and $b$
satisfy the same relations over $A$ that $s$ and $t$ do in $A[s,t]/I$.

Since Lemma~\ref{lmm:matrix} holds for arbitrary $s$ and $t$, then
we get the same equations by replacing $s$ with $a$ and $t$ with $b$ and
hence we have in $R(A;i,j,1,1)\cong M_2(A[a,b])$ on one hand that 
$(x^j)^{i+j}
=\begin{psmallmatrix}
a & b\\
1 & 0
\end{psmallmatrix}^{i+j}
=\begin{psmallmatrix}
f(i+j+1) & bf(i+j)\\
f(i+j) & bf(i+j-1)
\end{psmallmatrix}$ 
and on the other hand we obtain
$(x^j)^{i+j}=(x^{i+j})^j =\begin{psmallmatrix} b & 0\\ 0 & b 
\end{psmallmatrix}^{j}
=\begin{psmallmatrix} b^j & 0\\ 0 & b^j \end{psmallmatrix}$.
Since by Lemma~\ref{lmm:invert} $x$ is invertible, then so is $b$ in $L$,
and we then obtain from the above two equations that $f(i+j-1)=b^{j-1}$ 
and $f(i+j)=0$. We therefore get that $a$ and $b$ satisfy the same 
relations in $L = A[a,b]$ as $t$ and $s$ do in 
$I$ respectively and so $L\cong A[s,t]/I$ and hence 
$R(A;i,j,1,1)\cong M_2(A[s,t]/I)$ which completes our proof.
\end{proof}

In~\cite{Geirb}, it is shown that for $A=k$ a field, $R(k;i,j,1,1)$ 
always maps to some $M_N(k)$ and is therefore non-zero. It remains to show 
that if $\gcd(i,j)=1$, then for any commutative ring $A$ we have
$R(A;i,j,1,1)\neq\{0\}$.  
For that, we need a few technical results for the function $f$. The 
following lemma can be shown with simple induction arguments.
\begin{lemma}
\label{lmm:tpoly}
As a polynomial of $t$, we have
(I) $f(n)$ is monic with degree $n-1$,
(II) $f(2n)$ has no constant term,
(III) $f(2n+1)$ has constant term $s^n$.
\end{lemma}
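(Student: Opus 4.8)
The plan is to regard each $f(n)$ as a polynomial in the single indeterminate $t$ whose coefficients lie in $A[s]$, and to prove all three parts by induction on $n$ using nothing more than the defining recursion $f(n)=tf(n-1)+sf(n-2)$ and the initial values $f(0)=0$, $f(1)=1$. Because the recursion is of second order, each of these inductions needs two consecutive base cases rather than one.

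For part (I) I would take $n=1,2$ as base cases: $f(1)=1$ is monic of $t$-degree $0$, and $f(2)=tf(1)+sf(0)=t$ is monic of $t$-degree $1$. For the inductive step, assume the statement for all smaller indices; then $f(n-1)$ is monic of $t$-degree $n-2$ while $f(n-2)$ has $t$-degree $n-3$ (for $n=3$ this is just $\deg_t f(1)=0$). Hence in $f(n)=tf(n-1)+sf(n-2)$ the summand $tf(n-1)$ has $t$-degree $n-1$ with leading coefficient $1$, whereas $sf(n-2)$ has $t$-degree only $n-3<n-1$, so no cancellation can occur in the top coefficient. Thus $f(n)$ is monic of $t$-degree $n-1$.

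For parts (II) and (III) the key remark is that the constant term of $f(n)$, as a polynomial in $t$, is exactly its value at $t=0$. Setting $t=0$ in the recursion gives $f(n)|_{t=0}=s\cdot f(n-2)|_{t=0}$ for $n\ge 2$, together with $f(0)|_{t=0}=0$ and $f(1)|_{t=0}=1$. Iterating in steps of two yields $f(2n)|_{t=0}=s^{\,n-1}f(2)|_{t=0}=s^{\,n-1}\cdot 0=0$, which is (II), and $f(2n+1)|_{t=0}=s^{\,n}f(1)|_{t=0}=s^{\,n}$, which is (III).

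I do not expect a real obstacle; the whole lemma is elementary. The only point that needs a little discipline is keeping the two variables straight: throughout, ``degree'' and ``constant term'' refer to $t$, with coefficients taken in the ring $A[s]$, so that the factor $s$ occurring in the recursion is treated as a scalar when counting $t$-degree in (I) but is carried along faithfully in the constant-term computation of (III). Once this convention is fixed, each of (I), (II), (III) is a two-line induction and the lemma follows.
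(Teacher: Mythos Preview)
Your argument is correct and is exactly the kind of ``simple induction argument'' the paper had in mind; in fact the paper gives no detailed proof at all, merely stating that the lemma follows by induction. Your write-up supplies those details faithfully, so there is nothing to add or compare.
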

\begin{lemma}
\label{lmm:technical}
Let $\bar{f}$ be the image of $f$ under the map $A[s,t]\to A[t]$,
$s\mapsto -1$, so $\bar{f}(n)=t\bar{f}(n-1)-\bar{f}(n-2)$.  In 
this case we have for each $n\geq 1$: 
\begin{eqnarray}
\bar{f}(2n-1) &  =& (\bar{f}(n)+\bar{f}(n-1))(\bar{f}(n)-\bar{f}(n-1)), 
\label{eqn:aa}\\
\bar{f}(2n)-1 & = & (\bar{f}(n+1)-\bar{f}(n))(\bar{f}(n)+\bar{f}(n-1)),
\label{eqn:bb}\\
\bar{f}(2n)+1 & = & (\bar{f}(n+1)+\bar{f}(n))(\bar{f}(n)-\bar{f}(n-1)).
\label{eqn:cc}
\end{eqnarray}
\end{lemma}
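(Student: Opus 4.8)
The plan is to verify each of the three identities \eqref{eqn:aa}, \eqref{eqn:bb}, \eqref{eqn:cc} by induction on $n$, working entirely inside the polynomial ring $A[t]$ with the recurrence $\bar f(n)=t\bar f(n-1)-\bar f(n-2)$, $\bar f(0)=0$, $\bar f(1)=1$. The base case $n=1$ is immediate: \eqref{eqn:aa} reads $\bar f(1)=1=(\bar f(1)+\bar f(0))(\bar f(1)-\bar f(0))=1\cdot 1$; \eqref{eqn:bb} reads $\bar f(2)-1=t-1=(\bar f(2)-\bar f(1))(\bar f(1)+\bar f(0))=(t-1)\cdot 1$; and \eqref{eqn:cc} reads $\bar f(2)+1=t+1=(\bar f(2)+\bar f(1))(\bar f(1)-\bar f(0))=(t+1)\cdot 1$. (Here I use $\bar f(2)=t\bar f(1)-\bar f(0)=t$.)

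For the inductive step the cleanest route is to introduce the auxiliary quantities $u_n:=\bar f(n)+\bar f(n-1)$ and $v_n:=\bar f(n)-\bar f(n-1)$, and to observe that each satisfies the same linear recurrence as $\bar f$ does, namely $u_{n+1}=tu_n-u_{n-1}$ and $v_{n+1}=tv_n-v_{n-1}$, since the recurrence is linear. The right-hand sides of \eqref{eqn:aa}--\eqref{eqn:cc} are then $u_nv_n$, $v_{n+1}u_n$, and $u_{n+1}v_n$ respectively, while the left-hand sides are $\bar f(2n-1)$, $\bar f(2n)-1$, $\bar f(2n)+1$. So I would instead prove the three cleaner statements $\bar f(2n-1)=u_nv_n$, $\bar f(2n)-1=u_nv_{n+1}$, $\bar f(2n)+1=u_{n+1}v_n$ simultaneously by induction: assuming all three at level $n$, express $\bar f(2n+1)=t\bar f(2n)-\bar f(2n-1)$ and $\bar f(2n+2)=t\bar f(2n+1)-\bar f(2n)$ in terms of $u_n,u_{n+1},v_n,v_{n+1}$, push the recurrences $u_{n+2}=tu_{n+1}-u_n$, $v_{n+2}=tv_{n+1}-v_n$ through, and collect. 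An alternative (and perhaps the way the authors do it) is the Chebyshev substitution $t=2\cos\theta$, under which $\bar f(n)=\sin(n\theta)/\sin\theta$ (the Chebyshev polynomials $U_{n-1}$), whereupon all three identities reduce to the product-to-sum formulas for $\sin$ and $\cos$; but since we are over an arbitrary commutative ring $A$ this has to be phrased as a polynomial identity verified first over $\ints$, then specialised.

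The only mild obstacle is bookkeeping: one must be slightly careful that the identities \eqref{eqn:bb} and \eqref{eqn:cc} are not individually preserved by a single application of the recurrence — the two of them mix under $\bar f(2n+2)=t\bar f(2n+1)-\bar f(2n)$, so the three statements genuinely have to be carried along together, and one also needs the elementary relation $\bar f(2n) = u_n v_{n+1} + 1 = u_{n+1}v_n - 1$ to reconcile the two expressions for $\bar f(2n)$, equivalently $u_{n+1}v_n - u_n v_{n+1} = 2$, which is itself a one-line induction (it is the "Wronskian" of the two solutions $u$ and $v$ and is constant). Everything else is routine polynomial algebra, so I would state the $u_n,v_n$ recurrences and the constancy of $u_{n+1}v_n-u_nv_{n+1}$ as the two small ingredients and leave the collection of terms to the reader.
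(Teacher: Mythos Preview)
Your approach is correct and takes a genuinely different route from the paper's. The paper proves \eqref{eqn:aa} first in isolation by brute force: it derives $\bar f(2n+1)=(t^2-2)\bar f(2n-1)-\bar f(2n-3)$, applies the inductive hypothesis at $n$ and $n-1$, expands everything in terms of $\bar f(n-1)$ and $\bar f(n-2)$, and matches the resulting quartic-in-$t$ expression against the analogous expansion of $\bar f(n+1)^2-\bar f(n)^2$. Only afterwards are \eqref{eqn:bb} and \eqref{eqn:cc} handled together, by essentially the computation you sketch for the step from level $n$ to level $n+1$ once \eqref{eqn:aa} is available. Your repackaging via $u_n=\bar f(n)+\bar f(n-1)$ and $v_n=\bar f(n)-\bar f(n-1)$ is tidier and treats all three identities at once; the Chebyshev interpretation you mention is not what the paper does.

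One point you slightly underplay: to obtain $\bar f(2n+1)=u_{n+1}v_{n+1}$ from the level-$n$ hypotheses you need, in addition to the Wronskian $u_{n+1}v_n-u_nv_{n+1}=2$, the second invariant $u_{n+1}v_{n-1}-u_nv_n=t$ (equivalently $u_{n+2}v_n-u_{n+1}v_{n+1}=t$). Indeed, from $\bar f(2n)=u_{n+1}v_n-1$ and $\bar f(2n-1)=u_nv_n$ one gets $\bar f(2n+1)=u_{n+2}v_n-t$, and closing the gap to $u_{n+1}v_{n+1}$ is exactly this second identity. It is constant by the same one-line shift argument and equals $t$ at $n=2$, so your ``routine polynomial algebra'' remains an honest description, but this is a second named ingredient rather than a consequence of the Wronskian alone.
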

\begin{proof}
We first note that (\ref{eqn:aa}) is by Definition~\ref{def:frec} clearly
true for $n=1,2$. We proceed by induction on $n$. Suppose now
\[
\bar{f}(2m-1)=(\bar{f}(m)+\bar{f}(m-1))(\bar{f}(m)-\bar{f}(m-1))
=\bar{f}(m)^2-\bar{f}(m-1)^2
\]
for all $m\leq n$.  Then, by the defining recursion, we get
\begin{eqnarray*}
\bar{f}(2n+1) & = & t\bar{f}(2n)-\bar{f}(2n-1)\\
  & = & t[t\bar{f}(2n-1)-\bar{f}(2n-2)]-\bar{f}(2n-1)\\
  & = & t^2\bar{f}(2n-1)-t\bar{f}(2n-2)-\bar{f}(2n-1)\\
  & = & t^2\bar{f}(2n-1)-[\bar{f}(2n-1)+\bar{f}(2n-3)]-\bar{f}(2n-1)\\
  & = & (t^2-2)\bar{f}(2n-1)-\bar{f}(2n-3).
\end{eqnarray*}
Using the induction hypothesis for $\bar{f}(2n-1)$ and
$\bar{f}(2n-3)$ and the defining recursion, we further get
\begin{eqnarray*}
\bar{f}(2n+1) & = & 
  (t^2-2)[\bar{f}(n)^2-\bar{f}(n-1)^2]-[\bar{f}(n-1)^2-\bar{f}(n-2)^2]\\
  & = & (t^2-2)[(t\bar{f}(n-1)-\bar{f}(n-2))^2-\bar{f}(n-1)^2]
-[\bar{f}(n-1)^2-\bar{f}(n-2)^2]\\
  & = & (t^4-3t^2+1)\bar{f}(n-1)^2-(2t^3-4t)\bar{f}(n-1)\bar{f}(n-2)\\
  &   & + (t^2-1)\bar{f}(n-2)^2.
\end{eqnarray*}
Again, using the defining recurrence for $\bar{f}(n+1)$ and $\bar{f}(n)$, 
we obtain
\begin{eqnarray*}
\bar{f}(n+1)^2-\bar{f}(n)^2 
  & = & (\bar{f}(n+1)+\bar{f}(n))(\bar{f}(n+1)-\bar{f}(n))\\
  & = & \left([t\bar{f}(n)-\bar{f}(n-1)]+\bar{f}(n)\right)
    \left([t\bar{f}(n)-\bar{f}(n-1)]-\bar{f}(n)\right)\\
  & = & ((t+1)\bar{f}(n)-\bar{f}(n-1))((t-1)\bar{f}(n)-\bar{f}(n-1))\\
  & = & ((t+1)[t\bar{f}(n-1)-\bar{f}(n-2)]-\bar{f}(n-1))\\
  &   & \cdot((t-1)[t\bar{f}(n-1)-\bar{f}(n-2)]-\bar{f}(n-1))\\
  & = & ((t^2+t-1)\bar{f}(n-1)-(t+1)\bar{f}(n-2))\\
  &   & \cdot((t^2-t-1)\bar{f}(n-1)-(t-1)\bar{f}(n-2))\\
  & = & (t^4-3t^2+1)\bar{f}(n-1)^2-(2t^3-4t)\bar{f}(n-1)\bar{f}(n-2)\\
  &   & +(t^2-1)\bar{f}(n-2)^2.
\end{eqnarray*}
Hence, we obtain from the last two displayed relations 
\[
\bar{f}(2n+1)=\bar{f}(n+1)^2-\bar{f}(n)^2 =
(\bar{f}(n+1)+\bar{f}(n-1))(\bar{f}(n+1)-\bar{f}(n)),
\]
and thus (\ref{eqn:aa}) is proved by induction.

We will use induction to prove both (\ref{eqn:bb}) and (\ref{eqn:cc}) 
simultaneously. For $n=1$ we have $\bar{f}(2) = t$, $\bar{f}(1) = 1$ and 
$\bar{f}(0) = 0$ and so 
$\bar{f}(2)-1=t-1=(\bar{f}(2)-\bar{f}(1))(\bar{f}(1)+\bar{f}(0))$ and 
$\bar{f}(2)+1=t+1=(\bar{f}(2)+\bar{f}(1))(\bar{f}(1)-\bar{f}(0))$.
Suppose 
$\bar{f}(2m)-1=(\bar{f}(m+1)-\bar{f}(m))(\bar{f}(m)+\bar{f}(m-1))$ and 
$\bar{f}(2m)+1=(\bar{f}(m+1)+\bar{f}(m))(\bar{f}(m)-\bar{f}(m-1))$
for all $m\leq n$.  Using the defining recurrence, (\ref{eqn:aa}), and our 
induction hypothesis we get,
\begin{eqnarray*}
\bar{f}(2n+2) & =  & t\bar{f}(2n+1) - \bar{f}(2n)\\
  & =  & t(\bar{f}(n+1)-\bar{f}(n))(\bar{f}(n+1)+\bar{f}(n))
-\left[(\bar{f}(n+1)-\bar{f}(n))(\bar{f}(n)+\bar{f}(n-1))+1\right]\\
  & =  & (\bar{f}(n+1)-\bar{f}(n))(t\bar{f}(n+1)+t\bar{f}(n)
-\bar{f}(n)-\bar{f}(n-1))-1\\
  & =  & (\bar{f}(n+1)-\bar{f}(n))(\bar{f}(n+2)+\bar{f}(n+1))-1.
\end{eqnarray*}
Thus $\bar{f}(2n+2)+1=(\bar{f}(n+1)-\bar{f}(n))(\bar{f}(n+2)+\bar{f}(n+1))$.  
Similarly, 
$\bar{f}(2n+2)-1=(\bar{f}(n+1)+\bar{f}(n))(\bar{f}(n+2)-\bar{f}(n+1))$, 
which completes our proof.
\end{proof}
We now argue directly that if $\gcd(i,j)=1$, then 
$R(A;i,j,1,1)\neq \{0\}$ for any commutative ring $A$.
\begin{theorem}
\label{thm:notzero}
If $\gcd(i,j)=1$ and $I$ is as in Theorem~\ref{thm:1st-main},
then $I\neq A[s,t]$ and thus $R\neq \{0\}$.
\end{theorem}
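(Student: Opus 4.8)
The strategy is to display an explicit nonzero quotient ring of $A[s,t]/I$; since $R(A;i,j,1,1)\cong M_2(A[s,t]/I)$ by Theorem~\ref{thm:1st-main} and a full matrix ring over a nonzero ring is nonzero, this yields both $I\ne A[s,t]$ and $R\ne\{0\}$. The case $i=j$ is trivial: then $i=j=1$, $I=(t)$, and $A[s,t]/I=A[s]\ne\{0\}$; so assume $i>j\ge 1$ with $\gcd(i,j)=1$.

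First I would reduce to one variable. The $A$-algebra homomorphism $A[s,t]\to A[t]$, $s\mapsto-1$, kills the third generator $s^{i-j}-(-1)^{i-j}$ of $I$, hence factors through a surjection $A[s,t]/I\twoheadrightarrow A[t]/\bar I$, where $\bar I=\big(\bar f(i+j),\ \bar f(i+j-1)-(-1)^{j-1}\big)$ and $\bar f$ is the $s\mapsto-1$ specialization of $f$, i.e.\ exactly the function of Lemma~\ref{lmm:technical} (so $\bar f(0)=0$, $\bar f(1)=1$, $\bar f(n)=t\bar f(n-1)-\bar f(n-2)$). It then suffices to exhibit a \emph{monic} polynomial $g\in A[t]$ of degree $\ge 1$ with both $\bar f(i+j)$ and $\bar f(i+j-1)-(-1)^{j-1}$ lying in the ideal $(g)$: then $A[t]/\bar I$ surjects onto $A[t]/(g)$, which is a free $A$-module of positive rank, hence nonzero (as $A\ne\{0\}$), finishing the proof. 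I will use two polynomial identities, valid in $A[t]$ for all $n\ge1$, both obtained from Lemma~\ref{lmm:matrix} with $s=-1$ by writing $M=\begin{psmallmatrix}t&-1\\1&0\end{psmallmatrix}$: the Cassini identity $\bar f(n)^2-\bar f(n+1)\bar f(n-1)=1$ (compare determinants), and the doubling identity $\bar f(2m)=\bar f(m)\big(\bar f(m+1)-\bar f(m-1)\big)$ (compare the $(2,1)$-entries of $M^{2m}$ and $(M^m)^2$).

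Finding $g$ is the only substantive step, and it is governed by the parity of $i+j$. If $i+j=2n$ is even, then $i$ and $j$ are both odd and $(-1)^{j-1}=1$; I would take $g=\bar f(n+1)-\bar f(n-1)$, which is monic of degree $n\ge 2$ by Lemma~\ref{lmm:tpoly}(I). Here $\bar f(i+j)=\bar f(2n)=\bar f(n)\,g$ by the doubling identity, and $\bar f(i+j-1)-1=\bar f(2n-1)-1=\bar f(n)^2-\bar f(n-1)^2-1=\bar f(n-1)\,g$, where the second equality is relation~\eqref{eqn:aa} of Lemma~\ref{lmm:technical} and the third uses Cassini in the form $\bar f(n)^2-1=\bar f(n+1)\bar f(n-1)$; thus $\bar I\subseteq(g)$. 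If $i+j=2n-1$ is odd, I would take $g=\bar f(n)+(-1)^{j}\bar f(n-1)$, monic of degree $n-1=(i+j-1)/2\ge1$. By relation~\eqref{eqn:aa}, $\bar f(i+j)=\bar f(2n-1)=\big(\bar f(n)+\bar f(n-1)\big)\big(\bar f(n)-\bar f(n-1)\big)$, and one of the two factors is precisely $g$ (namely $\bar f(n)-\bar f(n-1)$ if $j$ is odd, and $\bar f(n)+\bar f(n-1)$ if $j$ is even), so $\bar f(i+j)\in(g)$. For the remaining generator, applying relation~\eqref{eqn:bb} (if $j$ is odd, so $(-1)^{j-1}=1$) or relation~\eqref{eqn:cc} (if $j$ is even, so $(-1)^{j-1}=-1$) of Lemma~\ref{lmm:technical} with $n$ replaced by $n-1$ gives $\bar f(i+j-1)-(-1)^{j-1}=\bar f(2n-2)-(-1)^{j-1}=g\cdot\big(\bar f(n-1)\pm\bar f(n-2)\big)\in(g)$. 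So $\bar I\subseteq(g)$ here too, which completes the argument.

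The one real obstacle is locating the correct $g$ — equivalently, a nonconstant common factor of $\bar f(i+j)$ and $\bar f(i+j-1)\mp1$; the heuristic that $M$ should act like a rotation by $j\pi/(i+j)$, i.e.\ $t=2\cos\!\big(j\pi/(i+j)\big)$, pinpoints it, though no such picture need enter the final write-up. Everything past that point is a short manipulation of the recursion for $\bar f$ and Lemmas~\ref{lmm:matrix}, \ref{lmm:tpoly}, \ref{lmm:technical}. Finally, since $g$ is monic, $A[t]/(g)$ is a genuine free $A$-module of positive rank, and the argument uses no invertibility or cancellation in $A$ whatsoever, so it is valid over every commutative ring $A$.
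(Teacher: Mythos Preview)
Your proof is correct. The odd $i+j$ case is essentially the paper's argument made more explicit: both specialize $s\mapsto-1$ and invoke Lemma~\ref{lmm:technical} to exhibit a monic common factor of $\bar f(i+j)$ and $\bar f(i+j-1)-(-1)^{j-1}$; you simply name that factor as $g=\bar f(n)+(-1)^j\bar f(n-1)$ and check both memberships, while the paper is content to observe a common factor exists.

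The genuine difference is the even $i+j$ case. The paper specializes $s\mapsto 1$ instead, so that the third generator dies and, by Lemma~\ref{lmm:tpoly}, both remaining generators have zero constant term; hence $\bar I\subseteq(t)$ and one is done in a line. You keep the uniform specialization $s\mapsto-1$ and manufacture a common factor $g=\bar f(n+1)-\bar f(n-1)$ via the Cassini and doubling identities (correctly read off from $\det M^n$ and the $(2,1)$-entry of $(M^n)^2$ using Lemma~\ref{lmm:matrix}). This works, and buys you a single specialization throughout together with an explicit monic $g$ and a concrete free $A$-module $A[t]/(g)$ as quotient in every case; the cost is a pair of extra identities that the paper avoids by switching to $s\mapsto 1$ and the one-line constant-term argument.
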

\begin{proof}
{\sc First case:} Suppose $i+j$ is even. 
Let $A[s,t]\to A[t]$ be the evaluation 
such that $s\mapsto 1$.  Then $\bar{I}= (\bar{f}(i+j-1)-1,\bar{f}(i+j),0)$.  
By Lemma~\ref{lmm:tpoly}, we have that $\bar{f}(i+j-1)$ has constant term 1 and 
so both $\bar{f}(i+j-1)-1$ and $\bar{f}(i+j)$ have no constant term. 
Therefore $I\subseteq (t)$ and thus $\bar{I}\neq A[t]$. Hence $I\neq A[s,t]$  
and so $R\neq \{0\}$.

{\sc Second case:} Suppose $i+j$ is odd.  
Let $A[s,t]\to A[t]$ be the evaluation such 
that $s\mapsto -1$.  Then $\bar{I}=(\bar{f}(i+j-1)-(-1)^{j-1},\bar{f}(i+j),0)$.
Regardless of the parity of $j-1$, both $\bar{f}(i+j-1)-(-1)^{j-1}$ and 
$\bar{f}(i+j)$ are monic by Lemma~\ref{lmm:tpoly}, and 
they share a common factor by Lemma~\ref{lmm:technical}, and thus 
$\bar{I}\neq A[t]$. Hence $I\neq A[s,t]$ and so $R\neq \{0\}$.
\end{proof}

\subsection{Examples}
\label{subsec:exa}

We conclude this section with two examples, the first of which is 
a consequence of Theorem~\ref{thm:1st-main} and is stated in the following
corollary.
\begin{corollary}
Let $A$ be a commutative ring, then $R(A;2,1,1,1)\cong M_2(A)$.
\end{corollary}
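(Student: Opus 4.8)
The plan is to specialize Theorem~\ref{thm:1st-main} to $i=2$, $j=1$ and then recognize the resulting coefficient ring $A[s,t]/I$ as $A$ itself. Since $\gcd(2,1)=1$, Theorem~\ref{thm:1st-main} applies and gives $R(A;2,1,1,1)\cong M_2(A[s,t]/I)$ with
\[
I=\left(f(i+j),\ f(i+j-1)-s^{j-1},\ s^{i-j}-(-1)^{i-j}\right)=\left(f(3),\ f(2)-1,\ s+1\right).
\]
Using the recursion of Definition~\ref{def:frec} I would compute $f(2)=tf(1)+sf(0)=t$ and $f(3)=tf(2)+sf(1)=t^2+s$, so that $I=(t^2+s,\ t-1,\ s+1)$.

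Next I would identify $A[s,t]/I$ with $A$. The key point is the identity $t^2+s=(t-1)(t+1)+(s+1)$, which shows $t^2+s\in(t-1,s+1)$ and hence $I=(t-1,s+1)$. But $(t-1,s+1)$ is precisely the kernel of the surjective $A$-algebra homomorphism $A[s,t]\to A$ determined by $t\mapsto 1$ and $s\mapsto -1$, since that kernel is generated by the two ``linear'' elements $t-1$ and $s+1$. Therefore $A[s,t]/I\cong A$, and composing with the isomorphism of Theorem~\ref{thm:1st-main} yields $R(A;2,1,1,1)\cong M_2(A)$.

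There is essentially no obstacle here: beyond invoking Theorem~\ref{thm:1st-main}, the entire content is the evaluation of $f(2)$ and $f(3)$ and the elementary observation that the quadratic generator of $I$ is redundant once the two linear generators are present. As an optional concrete check one can instead follow the construction inside the proof of Theorem~\ref{thm:1st-main}: with $\alpha=1$, $\beta=0$ (so that $\alpha j-\beta i=1$) the matrices $X=\begin{psmallmatrix} t & s\\ 1 & 0\end{psmallmatrix}$ and $Y=\begin{psmallmatrix} 0 & 1\\ 0 & 0\end{psmallmatrix}$ specialize, under $t=1$, $s=-1$, to $X=\begin{psmallmatrix} 1 & -1\\ 1 & 0\end{psmallmatrix}$ and $Y=\begin{psmallmatrix} 0 & 1\\ 0 & 0\end{psmallmatrix}$ in $M_2(A)$, and one verifies directly that $X^2Y+YX=I$ and $Y^2=0$; these two matrices over $A$ then exhibit the surjection $R(A;2,1,1,1)\twoheadrightarrow M_2(A)$ that the argument above shows is in fact an isomorphism.
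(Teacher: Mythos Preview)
Your proof is correct and follows essentially the same approach as the paper: apply Theorem~\ref{thm:1st-main}, compute $I=(t^2+s,\,t-1,\,s+1)=(t-1,\,s+1)$, and conclude $A[s,t]/I\cong A$. You supply slightly more detail (the explicit identity $t^2+s=(t-1)(t+1)+(s+1)$ and the concrete matrix check), but the argument is the same.
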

\begin{proof}
We know $R(A;2,1,1,1)\cong M_2(A[s,t]/I)$ where 
\[
I=( f(3),f(2)-s^0,s^1-(-1)^1)=( t^2+s,t-1,s+1)=( t-1,s+1)
\]
and so $A[s,t]/I\cong A$ and therefore $R(A;2,1,1,1)\cong M_2(A)$.
\end{proof}
We now consider the specific ${\rats}$-algebra $R({\rats};4,3,1,1)$.  
Again, using Theorem~\ref{thm:1st-main}, we know 
$R({\rats};4,3,1,1)\cong M_2(A[s,t]/I)$ where 
\begin{eqnarray*}
I & = & ( f(7),f(6)-s^2,s^1-(-1)^1)\\
  & = & (t^6+5st^4+6s^2t^2+s^3,t^5+4st^3+3s^2t-s^2,s+1)\\
  & = & (t^3-t^2-2t+1, s+1),
\end{eqnarray*}
since $s$ and $-1$ are in the same coset and the 
$\gcd(t^6-5t^4+6t^2-1,t^5-4t^3+3t-1)=t^3-t^2-2t+1$. Since $t^3-t^2-2t+1$ 
is irreducible over ${\rats}$, then ${\rats}[s,t]/I$ is a field 
extension of ${\rats}$ given by ${\rats}(\lambda)$ where 
$\lambda\in\comps$ satisfies the polynomial equation
$\lambda^3-\lambda^2-2\lambda+1 = 0$.  

While $R({\rats};4,3,1,1)\cong M_2({\rats}(\lambda))$, we 
still have $(4,3,1,1)\notin \mathcal{A}_{\rats}$ since 
there is no non-trivial homomorphism that maps a field 
extension to its base field. This is the case since $1$ must map to $1$ and 
hence ${\rats}$ must map identically to ${\rats}$.

Further, we note that by Theorem~\ref{thm:cyclic} we have for 
$x\in R({\rats};4,3,1,1)$ that $x^6-x^5+x^4-x^3+x^2-x+1=0$
where $x^6-x^5+x^4-x^3+x^2-x+1$ is irreducible over ${\rats}$.
Since each matrix over $\rats$ satisfies its characteristic 
polynomial, this means that the matrix ring $M_6({\rats})$ is the 
smallest possible matrix ring that $R({\rats};4,3,1,1)$ can be 
mapped to non-trivially. On the other hand, since, every 
field extension is a vector space over its base ring and every element 
of a field extension acts linearly on that vector space by multiplication,
then every field extension can be realized as set of matrices of dimension 
equal to the degree of the extension.  Therefore, since $\lambda$ 
satisfies the polynomial equation $\lambda^3-\lambda^2-2\lambda+1 = 0$ of
degree three, we have that ${\rats}(\lambda)$ 
is isomorphic to a subring of $M_3({\rats})$. This gives an explicit
isomorphism of $M_2({\rats}(\lambda))$ into a subring of 
$M_2(M_3({\rats})) = M_6({\rats})$. This means that the smallest
$N$ in~\cite{Geirb} for which there is a non-trivial $\rats$-algebra
homomorphism $R({\rats};4,3,1,1) \rightarrow M_N(\rats)$ is here
$N = 6$. We will see that this observation agrees with
Theorem~\ref{thm:ijArats} in the following Section~\ref{sec:Ak-fields}.

\section{Surjections onto matrix rings over base fields}
\label{sec:Ak-fields}

In the previous section we showed in Theorem~\ref{thm:1st-main}
that if $\gcd(i,j)=1$, then $R(A;i,j,1,1)\cong M_2(A[s,t]/I)$.  
However, we also argued that just because $R(A;i,j,1,1)\cong M_2(L)$ 
for some commutative ring $L$, does not necessarily mean that 
$(i,j,1,1)\in \mathcal{A}_A$ from Definition~\ref{def:ABC}, 
as the second example in the previous 
Subsection~\ref{subsec:exa} showed us.  In this section,
we will restrict our attention to $R(k;i,j,1,1)$ where $k$ is a field 
and investigate the set $\mathcal{A}_k\in{\nats}^4$ for various fields $k$.

Note that $(i,j,1,1)\in \mathcal{A}_{k}$ is equivalent to: ``One can find 
nonzero $2\times 2$ matrices $x, y \in M_2(k)$ satisfying 
$x^iy+yx^j=1$ and $y^2=0$.'' Clearly, if 
$x =\begin{psmallmatrix} 0 & 1 \\ 1 & 0 \end{psmallmatrix}$ and 
$y =\begin{psmallmatrix} 0 & 1 \\ 0 & 0 \end{psmallmatrix}$,
then we have $x^{2u + 1}y+yx^{2v + 1}=1$ and $y^2=0$ for any integers $u,v$,
and so we trivially have the following observations.
\begin{observation}
\label{obs:ij-odd-Ak}
For any field $k$ and odd $i,j\in\nats$, we have 
$(i,j,1,1)\in \mathcal{A}_{k}$.
\end{observation}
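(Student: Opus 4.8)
The plan is to prove this by an explicit construction: I will exhibit fixed nonzero matrices $X,Y\in M_2(k)$ that satisfy the two defining relations of $R(k;i,j,1,1)$ and then invoke the universal property of that algebra to obtain the required nontrivial homomorphism $R(k;i,j,1,1)\to M_2(k)$. The guiding idea is to choose $X$ to be an involution, $X^2=1$, so that every odd power of $X$ collapses back to $X$; then, when $i$ and $j$ are odd, the relation $x^iy+yx^j=1$ reduces to the single relation $XY+YX=1$, which is easy to arrange, and the relation $y^2=0$ is forced by taking $Y$ nilpotent of index $2$.

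Concretely, I would take
$X=\begin{psmallmatrix} 0 & 1 \\ 1 & 0 \end{psmallmatrix}$
and
$Y=\begin{psmallmatrix} 0 & 1 \\ 0 & 0 \end{psmallmatrix}$
in $M_2(k)$. Writing $i=2u+1$ and $j=2v+1$ with $u,v\in{\nats}_0$, the identity $X^2=\begin{psmallmatrix} 1 & 0 \\ 0 & 1 \end{psmallmatrix}$ gives $X^i=X^j=X$, and $Y^2=\begin{psmallmatrix} 0 & 0 \\ 0 & 0 \end{psmallmatrix}$ is the second defining relation. A one-line matrix multiplication shows $XY=\begin{psmallmatrix} 0 & 0 \\ 0 & 1 \end{psmallmatrix}$ and $YX=\begin{psmallmatrix} 1 & 0 \\ 0 & 0 \end{psmallmatrix}$, so $X^iY+YX^j=XY+YX=\begin{psmallmatrix} 1 & 0 \\ 0 & 1 \end{psmallmatrix}$, the identity of $M_2(k)$. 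Thus both defining relations of $R(k;i,j,1,1)$ hold for the pair $(X,Y)$, and these computations are valid over an arbitrary field (indeed over any commutative ring).

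Finally, by the universal property of $R(k;i,j,1,1)=k\langle x,y:x^iy+yx^j=1,\ y^2=0\rangle$ there is a unital $k$-algebra homomorphism $R(k;i,j,1,1)\to M_2(k)$ sending $x\mapsto X$ and $y\mapsto Y$; it is nontrivial since $Y\neq 0$ (equivalently, since $1\mapsto 1\neq 0$ in $M_2(k)\neq\{0\}$). By Definition~\ref{def:ABC} this is precisely the assertion $(i,j,1,1)\in\mathcal{A}_k$. There is no substantive obstacle here — this is essentially the computation already indicated in the discussion preceding the statement — and the only point requiring a moment's care is choosing an involutory $X$ so that all odd powers of $X$ equal $X$; everything else is routine $2\times 2$ arithmetic.
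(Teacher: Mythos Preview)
Your proposal is correct and is exactly the paper's own argument: the paper chooses the same matrices $X=\begin{psmallmatrix}0&1\\1&0\end{psmallmatrix}$ and $Y=\begin{psmallmatrix}0&1\\0&0\end{psmallmatrix}$ and observes that $X^{2u+1}Y+YX^{2v+1}=1$ and $Y^2=0$ for all $u,v$. There is nothing to add.
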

For a given field $k$, we would ideally like to determine exactly
for which $i,j\in\nats$ we have $(i,j,1,1)\in \mathcal{A}_{k}$. As this
question is too general to generate any interesting results, we will
focus on the base fields $\rats$ and $\ints_p$ for prime numbers $p\geq 2$.
By left-right symmetry (or stronger, by Theorem~\ref{thm:ijji}) we can
assume $j\geq i$. 

\subsection{Reducing to a four dimensional matrix algebra}
\label{subsec:Sijab}

\begin{definition}
\label{def:de2}
Let $k$ be a field. For $a,b \in k$ define 
\begin{eqnarray*}
S(k;i,j,a,b) & = & R(k;i,j,1,1)/(x^{2}-a x+b) \\              
& = & k\langle x,y : x^{i}y+yx^{j} = 1, \ \ y^{2} = x^{2}- ax + b = 0\rangle.
\end{eqnarray*}
\end{definition}
We have the following lemma:
\begin{lemma}
\label{lmm:Snonzero}
Either $S(k;i,j,a,b)$ is trivial or $S(k;i,j,a,b)\cong _{k}M_{2}(k)$
as $k$-algebras.
\end{lemma}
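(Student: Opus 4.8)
The plan is to prove $\dim_k S(k;i,j,a,b)\le 4$ and then read off the structure from the matrix form of $R(k;i,j,1,1)$ already established in Observation~\ref{obs:2x2}.

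First I would reduce the defining relations to a manageable form. Write $S=S(k;i,j,a,b)$. Since $x^2=ax-b$ in $S$, an easy induction shows $x^n\in k\cdot 1+k\cdot x$ for every $n\ge 0$; write $x^i=\gamma_0+\gamma_1 x$ and $x^j=\delta_0+\delta_1 x$ with coefficients in $k$. Substituting into $x^iy+yx^j=1$ collapses that relation to the single identity $\gamma_1\,xy+\delta_1\,yx+(\gamma_0+\delta_0)\,y=1$ in $S$. I claim $\delta_1=0$ forces $S$ to be trivial: if moreover $\gamma_1=0$ then $(\gamma_0+\delta_0)y=1$, so $y=y\cdot 1=(\gamma_0+\delta_0)y^2=0$ and $S=0$; if $\gamma_1\ne 0$ then $xy=\gamma_1^{-1}\bigl(1-(\gamma_0+\delta_0)y\bigr)$, and a short computation left-multiplying this by $x$ and using $x^2=ax-b$ and $y^2=0$ forces $x\in k\cdot 1+k\cdot y$, hence $xy\in k\cdot y$; comparing with $xy=\gamma_1^{-1}\bigl(1-(\gamma_0+\delta_0)y\bigr)$ then puts a nonzero scalar into $k\cdot y$, and such an element squares to $0$, so again $S=0$. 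Thus for nontrivial $S$ we have $\delta_1\ne 0$, whence $yx=\delta_1^{-1}\bigl(1-\gamma_1\,xy-(\gamma_0+\delta_0)\,y\bigr)$.

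Next I would check that $V:=k\cdot 1+k\cdot x+k\cdot y+k\cdot xy$ equals $S$. It contains $1$, and it is closed under left multiplication by $x$ (using $x\cdot xy=x^2y=a\,xy-b\,y$) and by $y$ (using $y\cdot y=0$, the rule for $yx$ above, and $y\cdot xy=(yx)y=\delta_1^{-1}y$). Hence $V$ is a left ideal containing $1$, so $V=S$ and $\dim_k S\le 4$. Now by Observation~\ref{obs:2x2}, $R(k;i,j,1,1)\cong M_2(L)$ for a $k$-algebra $L$; since every two-sided ideal of $M_2(L)$ has the form $M_2(J)$ for a two-sided ideal $J$ of $L$, the quotient $S$ is $k$-isomorphic to $M_2(L/J)$ for a suitable $J$. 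If $S$ is nontrivial then $L/J\ne 0$, and $4\ge\dim_k S=4\dim_k(L/J)$ forces $\dim_k(L/J)=1$, i.e.\ $L/J=k$; therefore $S\cong M_2(k)$.

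The main obstacle is the degenerate case $\delta_1=0$, where $x^j$ becomes a scalar in $S$ and the usual rewriting rule for $yx$ is unavailable; the ad hoc argument above shows $S$ is already trivial there, so it does not threaten the statement, but it is the one place where care is needed. Everything else—reducing powers of $x$ via the quadratic, checking that $V$ is a left ideal, and the standard description of ideals of $M_2(L)$—is routine.
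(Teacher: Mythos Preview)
Your proof is correct and follows essentially the same approach as the paper: reduce powers of $x$ via the quadratic relation, extract a rewriting rule for $yx$, conclude $\dim_k S\le 4$, and then use that any nontrivial quotient of $R(k;i,j,1,1)\cong M_2(L)$ is again a $2\times 2$ matrix ring. Your treatment is in fact more careful than the paper's in two spots: the paper simply asserts ``clearly $f_i,f_j\neq 0$'' when $S$ is nonzero, whereas you actually verify that $\delta_1=0$ forces $S=0$, and you spell out explicitly the ideal correspondence $M_2(L)/M_2(J)\cong M_2(L/J)$ that the paper leaves implicit.
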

\begin{proof}
Assume $S(k;i,j,a,b)$ is nonzero. Then as an
image of $R(k;i,j,1,1)$, it must be a nonzero $2\times 2$ matrix algebra. By 
applying the rule $x^{2}=ax-b$ one gets the formulas:
\[
x^{i} = f_{i}x+g_{i}, \ \ x^{j} = f_{j}x+g_{j},
\]
where $f_{l},g_{l}\in k$. Clearly since $S(k;i,j,a,b)$ is nonzero, 
$f_{i},f_{j}\neq 0$ must hold. By putting these expressions
into the equation $x^{i}y+yx^{j}=1$ and then isolating $yx$, one gets an 
expression of the form
\[
yx=-\frac{f_i}{f_j}xy - \frac{g_i+g_j}{f_j}x + \frac{1}{f_j}
\]
which gives a commuting rule for $x$ and $y$. Hence $S(k;i,j,a,b)$ is 
generated by $x$ and $y$ satisfying:
\[
x^{2} = ax -b, \ \ y^{2} = 0, \ \  
yx = -\frac{f_i}{f_j}xy - \frac{g_i+g_j}{f_j}x + \frac{1}{f_j}
\]
which makes $S(k;i,j,a,b)$ a nonzero $k$-algebra spanned 
by $\{ 1,x,y,xy\}$ and can therefore be at most 4-dimensional. 
As a $2\times 2$ matrix algebra over $k$, it must be of dimension
exactly $4$, and so it must be isomorphic to $M_{2}(k)$.
\end{proof}
We will now examine the conditions that $i,j$ and $k$ must satisfy in order 
for $(i,j,1,1)\in {\mathcal{A}}_{k}$. We have already seen 
in Observation~\ref{obs:ij-odd-Ak} that if both $i$ and $j$ are odd
then $(i,j,1,1)\in {\mathcal{A}}_{k}$ for all field $k$,
so we will therefore concentrate on other values of $i$ and $j$. 
We will no longer assume $\gcd(i,j) = 1$.

We note that if $(i,j,1,1)\in {\mathcal{A}}_{k}$ then, since 
$R(k;i,j,1,1)$ can
be mapped onto $M_{2}(k)$ in which every element satisfies its second 
degree characteristic polynomial, there must be $a,b\in k$ such that 
$S(k;i,j,a,b)$ is nonzero, in which case it is isomorphic to $M_{2}(k)$
as a $k$-algebra. 
It therefore is sufficient
to find the conditions $i,j$ and $k$ must satisfy such that there are 
$a ,b\in k$ which make $S(k;i,j,a,b)$ nonzero. Now, if $a ,b\in k$
and $F$ is an extension field of $k$, then 
$S(F;i,j,a,b) = S(k;i,j,a,b)\otimes _{k}F$, 
so $S(k;i,j,a,b)$ is nonzero if and only if $S(F;i,j,a,b)$ is nonzero.
\begin{lemma}
\label{lmm:insep}
For $(i,j)\neq (1,1)$ we have the following:
if $x^{2}-ax+b\in k[x]$ is inseparable with double root $r\in \bar{k}$,
the algebraic closure of $k$,
then: $S(k;i,j,a,b)$ is nonzero if and only if $\charac(k)\mid i+j$,
$\charac(k)\not\,\mid i$ and $r^{j-i}=-1$.
\end{lemma}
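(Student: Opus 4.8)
The plan is to base-change to the algebraic closure and reduce the question to an explicit computation in $M_2$ with a Jordan block. Since $r$ is a root of $x^2-ax+b$, the field $K=k(r)$ is a finite extension of $k$ inside $\bar k$, and by the base-change identity $S(K;i,j,a,b)=S(k;i,j,a,b)\otimes_k K$ recorded above, $S(k;i,j,a,b)$ is nonzero if and only if $S(K;i,j,a,b)$ is nonzero. By Lemma~\ref{lmm:Snonzero} the latter is either zero or isomorphic to $M_2(K)$, and inseparability gives $x^2-ax+b=(x-r)^2$, so in the nonzero case the image of $x$ in $M_2(K)$ is annihilated by $(t-r)^2$; hence $x$ is either the scalar $rI$ or, after a conjugation, the Jordan block $\begin{psmallmatrix} r & 1\\ 0 & r\end{psmallmatrix}$. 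The scalar case is impossible: then $x^iy+yx^j=(r^i+r^j)y$, and $(r^i+r^j)y=I$ would force $r^i+r^j\neq0$ and $y=(r^i+r^j)^{-1}I$, whose square is nonzero, against $y^2=0$. The value $r=0$ is impossible too: then $x^2=0$, and inspecting the scalar and Jordan cases (using $(i,j)\neq(1,1)$ and $j\geq i$, so that $x^2=0$ kills the relevant powers) one finds $x^iy+yx^j\neq I$ by a rank or $0=1$ obstruction, so $S=0$. Hence from now on $r\neq0$ and, after conjugation, $x=rI+N$ with $N=\begin{psmallmatrix}0&1\\0&0\end{psmallmatrix}$; the binomial theorem gives $x^i=r^iI+ir^{i-1}N$ and $x^j=r^jI+jr^{j-1}N$.

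The heart of the proof is then a direct entrywise comparison. Writing $y=\begin{psmallmatrix} p & q\\ s & -p\end{psmallmatrix}$ (trace $0$ with $p^2+qs=0$, forced by $y^2=0$) and expanding $x^iy+yx^j=I$, the $(2,1)$ entry reads $(r^i+r^j)s=0$; if $r^i+r^j\neq0$ then $s=0$, hence $p^2=0$, hence $p=0$, and then the $(1,1)$ entry degenerates to $0=1$. So $r^i+r^j=0$, which (as $r\neq0$ and $j\geq i$) is exactly $r^{j-i}=-1$. Given this, the $(1,1)$ entry becomes $ir^{i-1}s=1$, which forces $i$ to be a unit of $K$, i.e.\ $\charac(k)\nmid i$, and fixes $s=(ir^{i-1})^{-1}$; the $(2,2)$ entry becomes $jr^{j-1}s=1$, and comparing with the $(1,1)$ entry and cancelling the unit $s$ gives $ir^{i-1}=jr^{j-1}$, i.e.\ $i=jr^{j-i}=-j$, that is $\charac(k)\mid i+j$; the $(1,2)$ entry is then automatically satisfied. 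This establishes that the three conditions are necessary.

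Conversely, given $\charac(k)\mid i+j$, $\charac(k)\nmid i$ and $r^{j-i}=-1$ (so $r\neq0$), I would run the computation in reverse: set $x=rI+N$ and $y=\begin{psmallmatrix}0&0\\ s&0\end{psmallmatrix}$ with $s=(ir^{i-1})^{-1}$. Then $y^2=0$ and $(x-r)^2=N^2=0$, i.e.\ $x^2-ax+b=0$, and a short check --- using $r^j=-r^i$, $jr^{j-1}=ir^{i-1}$, and $ir^{i-1}s=1$ --- gives $x^iy+yx^j=I$. Hence the defining relations of $S(K;i,j,a,b)$ are satisfied by a nonzero pair of matrices, so there is a nonzero homomorphism out of $S(K;i,j,a,b)$; thus $S(K;i,j,a,b)\neq0$, and therefore $S(k;i,j,a,b)\neq0$.

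The entrywise bookkeeping above is routine; the two places demanding care are the degeneracies dispatched at the start (the scalar $x=rI$ and the value $r=0$) and the characteristic-$2$ boundary case $i=j$ --- permitted here since $(i,j)\neq(1,1)$ only forbids $i=j=1$ --- in which $r^{j-i}=r^{0}=1$, so the hypothesis $r^{j-i}=-1$ reads precisely $\charac(k)=2$, consistent with $\charac(k)\mid 2i$ and $\charac(k)\nmid i$.
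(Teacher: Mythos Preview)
Your proof is correct. The approach differs from the paper's in execution, though both reach the same key equations. The paper works entirely inside the presented algebra: it substitutes $z=x-r$ to obtain
\[
\bar{k}\langle y,z : (ir^{i-1}z+r^{i})y+y(jr^{j-1}z+r^{j})=1,\ y^{2}=z^{2}=0\rangle,
\]
and then multiplies the first relation by $y$ (left and right) to get $ir^{i-1}yzy=y=jr^{j-1}yzy$, and by $z$ (left and right) to force $(r^i+r^j)(yz-zy)=0$, invoking noncommutativity rather than any matrix model. You instead invoke Lemma~\ref{lmm:Snonzero} up front to realise the nonzero algebra as $M_2(K)$, put $x$ into Jordan form $rI+N$, and read off the conditions entry by entry from $x^iy+yx^j=I$. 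Your route is more linear-algebraic and perhaps more transparent for a reader comfortable with matrices; the paper's route is presentation-intrinsic and avoids the preliminary appeal to Lemma~\ref{lmm:Snonzero}. The sufficiency constructions are essentially transposes of each other: the paper sends $y$ to an upper-triangular nilpotent and $z$ to a lower-triangular one, while you do the reverse. Your handling of the degenerate cases ($x$ scalar, $r=0$) and the boundary case $i=j$ in characteristic $2$ is sound, though the $r=0$ dispatch could be stated a touch more explicitly (when exactly one of $i,j$ equals $1$, the relation $Ny=I$ or $yN=I$ fails by a rank-one obstruction; when both exceed $1$, it reads $0=I$).
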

\begin{proof}
$S(k;i,j,a,b)$ is nonzero if and only if 
$S(\bar{k};i,j,a,b)$ is nonzero, so we assume either one. 
Since $x^{2}-ax+b = (x-r)^{2} \in \bar{k}[x]$, 
by putting $z=x-r$ we get a new representation of 
$S(\bar{k};i,j,a,b)$ as
\[
\bar{k}\langle y,z\ : 
\ (ir^{i-1}z+r^{i})y+y(jr^{j-1}z+r^{j})=1,\ y^{2}=z^{2}=0\rangle.
\] 
By multiplying the first equation by $y$ left and right, one gets
$ir^{i-1}yzy=y=jr^{j-1}yzy$. Since our algebra is nonzero we get:
\begin{equation}
\label{eqn:irjr}
ir^{i-1}=jr^{j-1} \neq 0.
\end{equation}
By multiplying the same equation by $z$ left and right we get:
\[
(r^{i}+r^{j})zy = z-jr^{j-1}zyz, \ \ 
(r^{i}+r^{j})yz = z-ir^{i-1}zyz.
\]
By (\ref{eqn:irjr}) we get $(r^{i}+r^{j})(yz-zy) = 0$. Since 
$S(\bar{k};i,j,a,b)$ is nonzero and hence $\cong _{k}M_{2}(k)$, it is 
noncommutative so $yz-zy\neq 0$, and so $r^{i}+r^{j}=0$ must hold.
Since $(i,j)\neq (1,1)$ either $i-1$ or $j-1$ is greater than $0$ so by 
(\ref{eqn:irjr}), $r$ cannot be zero. We get therefore $r^{j-i}=-1$ and
$jr^{j-i}=i\neq 0$, so we get the necessary conditions: $r^{j-i}=-1$ and
$i+j$ is divisible by $\charac(k)$ but neither $i$ nor $j$ are.
These conditions are sufficient since if they hold, then one can map:
\[
y\mapsto
\begin{pmatrix}
0 & \frac{1}{ir^{i-1}} \\
0 & 0
\end{pmatrix}
\ 
z\mapsto
\begin{pmatrix}
 0 & 0 \\
 1 & 0
\end{pmatrix}.
\]
Clearly these matrices satisfy the defining equations for 
$S(\bar{k};i,j,a,b)$ in its new representation.
\end{proof}
Next we examine conditions that will make $S(k;i,j,a,b)$ 
nonzero when $x^{2}-ax+b\in k[x]$ is separable with two distinct roots
$r,s\in \bar{k}$. $S(k;i,j,a,b)$ is nonzero if and only if
$S(\bar{k};i,j,a,b)\cong _{k}M_{2}(\bar{k})$. Now if the image of $x$
under this isomorphism satisfies $x^{2}-ax +b=(x-r)(x-s)$, then 
$x$ is mapped to a $2\times 2$
matrix that has $(x-r)(x-s)$ as a minimal polynomial and is therefore
diagonalizible with eigenvalues $r$ and $s$. We may therefore by an inner
isomorphism of $M_{2}(\bar{k})$ assume the $\bar{k}$-algebra isomorphism 
$S(\bar{k};i,j,a,b)\cong M_{2}(\bar{k})$ to have the form
$x\mapsto \begin{psmallmatrix} r & 0 \\ 0 & s \end{psmallmatrix}$ and 
$y\mapsto\begin{psmallmatrix} b_{1\/1} & b_{1\/2} \\ b_{2\/1} & b_{2\/2} 
\end{psmallmatrix}$.
In order for these matrices to satisfy all the defining relations of 
$S(k;i,j,a,b)$ we now only need to examine $x^{i}y+yx^{j}=1$ and $y^{2}=0$.
The first equation gives the conditions:
\[
(r^{i}+r^{j})b_{1\/ 1} =1,  \ \ (r^{i}+s^{j})b_{1\/ 2} =0, \ \ 
(s^{i}+s^{j})b_{2\/ 2} =1, \ \ (r^{j}+s^{i})b_{2\/ 1} =0.
\]
Clearly $r,s\neq 0$.
Now if $b_{1\/ 2}=0$ then, since $y^{2}=0$, we must have 
$b_{1\/ 1}=b_{2\/ 2}=0$ which is impossible.
We have therefore $b_{1\/ 2}\neq 0$. The same holds for $b_{2\/ 1}$, 
so we have necessary
conditions that $r$ and $s$ must satisfy:
\begin{equation}
\label{eqn:r-and-s}
r^{i}+s^{j} = 0\neq r^{i}+r^{j}, \ \ r^{j}+s^{i}  = 0 \neq  s^{i}+s^{j}. 
\end{equation}
If we do have $r,s\in k$ satisfying (\ref{eqn:r-and-s}), then letting
$b_{1\/1}=\frac{1}{r^{i}+r^{j}}, b_{2\/ 2}=\frac{1}{s^{i}+s^{j}},
b_{1\/2}=b_{2\/1}=\frac{\mbox{\bf i}}{r^{i}+r^{j}}$, where 
$\mbox{\bf i}^{2}=-1$, it is easy to check that:
$x\mapsto \begin{psmallmatrix} r & 0 \\ 0 & s \end{psmallmatrix}$,
$y\mapsto \begin{psmallmatrix} b_{1\/1} & b_{1\/2} \\ b_{2\/1} & b_{2\/2}
\end{psmallmatrix}$
is indeed a $\bar{k}$-algebra isomorphism 
$S(\bar{k};i,j,a,b)\cong M_{2}(\bar{k})$. It is therefore sufficient to find
$r,s\in\bar{k}$ satisfying (\ref{eqn:r-and-s}).

Therefore, we are looking for $r,s\in \bar{k}$ satisfying a second degree
polynomial over $k$, so in addition to the necessary conditions of 
(\ref{eqn:r-and-s}) that $r,s\in \bar{k}$ must satisfy, 
we must also have
$r+s,rs\in k$. Clearly these conditions combined are sufficient to make
$S(k;i,j,a,b)$ well defined and nonzero.
\begin{lemma}
\label{lmm:rs}
For $r,s\in \bar{k}^{*}$ (\ref{eqn:r-and-s}) are equivalent to:
\begin{equation}
\label{eqn:rs-condit}
(rs)^{j-i} = 1, \ \ r^{i+j}+(rs)^{i} = 0, r^{j-i} \neq  -1.
\end{equation}
So, $S(k;i,j,a,b) \neq \{0\}$ if and only if there
are $r,s\in\bar{k}^{*}$ satisfying (\ref{eqn:rs-condit}) such
that $r+s,rs\in k$. $\Box$ 
\end{lemma}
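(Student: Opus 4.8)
The plan is to establish the equivalence of the two systems (\ref{eqn:r-and-s}) and (\ref{eqn:rs-condit}) by direct manipulation, using throughout that $r,s\in\bar k^{*}$ so that powers of $r$ and $s$ may be freely cancelled; the concluding ``if and only if'' statement then drops out by combining this equivalence with the analysis already carried out just before the lemma, which characterizes $S(k;i,j,a,b)\neq\{0\}$ (for separable $x^{2}-ax+b$ with roots $r,s$) by (\ref{eqn:r-and-s}) together with $r+s,rs\in k$.

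For the direction (\ref{eqn:r-and-s}) $\Rightarrow$ (\ref{eqn:rs-condit}), I would begin from the equalities $r^{i}=-s^{j}$ and $r^{j}=-s^{i}$. Dividing them gives $r^{i-j}=s^{j-i}$, that is $(rs)^{j-i}=1$; substituting $r^{i}=-s^{j}$ then gives $(rs)^{i}=r^{i}s^{i}=-s^{i+j}$, while multiplying the two equalities gives $r^{i+j}=s^{i+j}$, so $r^{i+j}+(rs)^{i}=0$. For the inequalities I would use the factorizations $r^{i}+r^{j}=r^{i}(1+r^{j-i})$ and $s^{i}+s^{j}=s^{i}(1+s^{j-i})$; since $r^{i},s^{i}\neq 0$ and $s^{j-i}=r^{i-j}=(r^{j-i})^{-1}$, each of $r^{i}+r^{j}\neq 0$ and $s^{i}+s^{j}\neq 0$ is equivalent to $r^{j-i}\neq-1$.

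For the converse, starting from (\ref{eqn:rs-condit}): cancelling $r^{i}$ in $r^{i+j}+r^{i}s^{i}=0$ recovers $r^{j}=-s^{i}$, and then $s^{j}=s^{i}\cdot s^{j-i}=(-r^{j})r^{i-j}=-r^{i}$ recovers $r^{i}=-s^{j}$, using $(rs)^{j-i}=1$; the same two factorizations turn $r^{j-i}\neq-1$ back into both strict inequalities of (\ref{eqn:r-and-s}). I would also remark that (\ref{eqn:rs-condit}) automatically forces $r\neq s$, since $r=s$ would make $r^{i+j}+(rs)^{i}=r^{2i}(r^{j-i}+1)$, compelling $r^{j-i}=-1$; so there is no tension with the separability hypothesis under which (\ref{eqn:r-and-s}) was derived.

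Combining the two implications yields (\ref{eqn:r-and-s}) $\Leftrightarrow$ (\ref{eqn:rs-condit}) over $\bar k^{*}$, and then the final assertion follows: for $a=r+s$, $b=rs$ with $x^{2}-ax+b$ separable, $S(k;i,j,a,b)\neq\{0\}$ precisely when its roots satisfy (\ref{eqn:r-and-s}), hence precisely when they satisfy (\ref{eqn:rs-condit}), the side condition $r+s,rs\in k$ being just $a,b\in k$. I do not anticipate any genuine obstacle; the one place that needs attention is the bookkeeping for the inequalities --- specifically, noticing that via $(rs)^{j-i}=1$ the condition $s^{i}+s^{j}\neq 0$ is not independent but coincides with $r^{j-i}\neq-1$.
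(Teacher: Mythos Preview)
Your argument is correct and is precisely the kind of direct algebraic manipulation one would expect; the paper in fact states this lemma without proof (note the $\Box$ at the end of the statement), so there is nothing to compare against beyond observing that your write-up supplies the routine verification the authors chose to omit.
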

By looking at the conditions of Lemma~\ref{lmm:rs}, along with the
condition $r+s,rs \in k$, we see that whether there is a root $r$ of 
$x^{i+j}+\zeta ^{i} \in k[x]$ in $\bar{k}$, where $\zeta$ is a root of 
$x^{j-i}-1$ in $k$, such that $r+\zeta /r \in k$ and $r^{j-i}\neq -1$, 
depends not only on the characteristic of $k$ but also on what kind 
of an extension field of the base fields ($\rats$ or $\ints _{p}$) $k$ is. 
Since by Definition~\ref{def:ABC} we have for all fields that 
$k_{1}\subseteq k_{2}$ implies that 
${\mathcal{A}}_{k_{1}}\subseteq {\mathcal{A}}_{k_{2}}$ and
${\mathcal{B}}_{k_{1}}\subseteq {\mathcal{B}}_{k_{2}}$, it seems natural
to study the initial element in the category of fields with a 
certain characteristic. Hence we will consider the cases 
$k=\rats, \ints _{p}$ for primes $p\geq 2$. Since $-1=1$ in 
$k=\ints _{2}$ we will dispatch that special case first.
\begin{theorem}
\label{thm:AF2}
$(i,j,1,1)\in {\mathcal{A}}_{\ints _{2}}$ if and only if 
\[
(i,j)\equiv \left\{ 
\begin{array}{ll}
(1,1) \pmod{2} \\
(1,2),\ (2,1) \pmod{3}.
\end{array} \right.
\]
\end{theorem}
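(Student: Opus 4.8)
The plan is to apply the structural results of Subsection~\ref{subsec:Sijab} with $k = \ints_2$, exploiting the fact that over $\ints_2$ we have $-1 = 1$, so the separable and inseparable cases partly collapse. Recall that $(i,j,1,1) \in \mathcal{A}_{\ints_2}$ holds if and only if there exist $a,b \in \ints_2$ with $S(\ints_2;i,j,a,b)$ nonzero, and by Lemma~\ref{lmm:Snonzero} such a nonzero algebra is automatically $\cong M_2(\ints_2)$. Over $\ints_2$ there are only three monic quadratics to consider: the inseparable $x^2$ (double root $0$) and $x^2 + 1 = (x+1)^2$ (double root $1$), and the single separable/irreducible quadratic $x^2 + x + 1$, whose roots $r, s$ are the two primitive cube roots of unity in $\ints_4 = \mathbb{F}_4$. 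I would handle the odd-odd case separately via Observation~\ref{obs:ij-odd-Ak}, which already gives $(i,j,1,1) \in \mathcal{A}_{\ints_2}$ whenever $i \equiv j \equiv 1 \pmod 2$, matching the first line of the claimed characterization.

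Next I would dispose of the inseparable quadratics using Lemma~\ref{lmm:insep} (assuming $(i,j) \ne (1,1)$, the $(1,1)$ case being covered above). For $x^2$ the double root is $r = 0$; but the lemma requires $r^{j-i} = -1 \ne 0$, so $r = 0$ is never usable — this case contributes nothing. For $(x+1)^2$ the double root is $r = 1$, and since $\charac(\ints_2) = 2$, the condition $r^{j-i} = -1 = 1$ holds automatically; the remaining requirements are $2 \mid i+j$ and $2 \nmid i$, i.e. both $i$ and $j$ odd — again subsumed by the first line. So the inseparable quadratics give exactly the odd-odd case and nothing new. The genuinely new content must therefore come from the separable quadratic $x^2 + x + 1$, and here I would invoke Lemma~\ref{lmm:rs}: with $r, s$ the primitive cube roots of unity in $\mathbb{F}_4^*$ (so $rs = 1$, $r^3 = s^3 = 1$, $r + s = 1 \in \ints_2$, $rs = 1 \in \ints_2$), the conditions \eqref{eqn:rs-condit} become $(rs)^{j-i} = 1^{j-i} = 1$ (always true), $r^{i+j} + (rs)^i = r^{i+j} + 1 = 0$ (i.e. $r^{i+j} = 1$, equivalently $3 \mid i+j$), and $r^{j-i} \ne -1 = 1$ (i.e. $3 \nmid j - i$). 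Since $r$ has order $3$, the pair of conditions $3 \mid i+j$ and $3 \nmid i-j$ translates into $(i,j) \equiv (1,2)$ or $(2,1) \pmod 3$ (one checks the six residue pairs mod $3$: $i+j \equiv 0$ forces $(0,0), (1,2), (2,1)$, and $(0,0)$ is excluded by $3 \nmid i-j$). The side condition $r + s = rs = 1 \in \ints_2 = k$ is satisfied, so Lemma~\ref{lmm:rs} yields $S(\ints_2;i,j,a,b) \ne \{0\}$ with $a = b = 1$ precisely in these cases.

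Assembling the three cases: $(i,j,1,1) \in \mathcal{A}_{\ints_2}$ if and only if one of the quadratics $x^2$, $x^2+1$, $x^2+x+1$ produces a nonzero $S$, which happens exactly when either both $i,j$ are odd, or $(i,j) \equiv (1,2), (2,1) \pmod 3$ — which is the stated characterization. I expect the main obstacle to be bookkeeping rather than depth: one must be careful that the $(i,j) = (1,1)$ exclusion in Lemma~\ref{lmm:insep} does not lose any cases (it does not, since $(1,1)$ is odd-odd and recovered separately), and one must verify that the separable-quadratic analysis of Subsection~\ref{subsec:Sijab} genuinely applies over $\ints_2$ — in particular that the element $\mathbf{i}$ with $\mathbf{i}^2 = -1$ used in constructing the isomorphism exists in $\bar{\ints_2} = \bar{\mathbb{F}_2}$, which it does since $\mathbf{i} = 1$ works as $-1 = 1$. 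A secondary subtlety is confirming that no quadratic other than these three need be considered: any nonzero image of $R(\ints_2;i,j,1,1)$ in $M_2(\ints_2)$ sends $x$ to a matrix satisfying a monic degree-$\le 2$ polynomial over $\ints_2$, and if that polynomial has degree $1$ then $x$ is scalar, forcing (from $x^i y + y x^j = 1$ and $y^2 = 0$, multiplying by $y$) the contradiction $y = 2 x^{?} y = 0$ unless handled — so the minimal polynomial of $x$ is genuinely quadratic and is one of the three listed.
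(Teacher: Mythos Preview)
Your approach is essentially the same as the paper's: split into inseparable and separable quadratics and invoke Lemmas~\ref{lmm:insep} and~\ref{lmm:rs}. There is, however, one small bookkeeping slip. Over $\ints_2$ there are \emph{four} monic quadratics, not three; you omit $x^2+x=x(x+1)$, which is separable with roots $0,1\in\ints_2$. Your closing justification (``if the minimal polynomial has degree $1$ then $x$ is scalar'') rules out scalar $x$ but does not exclude the genuinely quadratic case $x^2+x$. The fix is immediate: the discussion preceding Lemma~\ref{lmm:rs} already forces $r,s\neq 0$ (from $(r^i+r^j)b_{11}=1$ etc.), so $b=rs\neq 0$ and hence $b=1$ in $\ints_2$; equivalently, $x^2+x$ yields a trivial $S$. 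The paper organizes this slightly differently---rather than enumerating quadratics it observes directly that $rs=1$ (since $rs\in\ints_2^*=\{1\}$) and then parameterizes by $r+1/r\in\{0,1\}$, disposing of $r+1/r=0$ because it forces $r^{j-i}=1$---but the content is the same. With the missing quadratic accounted for, your argument is complete.
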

\begin{proof}
We need to find necessary and sufficient conditions
on $(i,j)$ such that one can find $a ,b\in \ints _{2}$ with 
$S(\ints_2;i,j,a,b)$ nonzero. There are two cases:

{\sc First case:} $x^{2}-a x+b$ is inseparable:
Here by Lemma~\ref{lmm:insep} we must have
both $i$ and $j$ odd numbers, which is also sufficient by
Observation~\ref{obs:ij-odd-Ak}.

{\sc Second case:} $x^{2}-a x+b$ is separable: Here by
Lemma~\ref{lmm:rs} the roots must 
satisfy $rs=1,r^{i+j}=1,r^{j-i}\neq 1$ and $r+s\in \ints _{2}$ and so
$r+1/r =0$ or $1$.

If $r+1/r=0$, then $r^{2}=1$ so $1=r^{i+j}=r^{j-i+2i}=r^{j-i}$,
which is impossible.

If $r+1/r=1$, then $r^{2}+r+1=0$ and so $r^{3}=1$. Since 
$\gcd(x^{3}-1,x^{i+j}-1)=x^{\gcd(3,i+j)}-1$, we must have $i+j$ divisible
by $3$ and $j-i$ not divisible by $3$. Since $i,j\equiv 0,1,2 \pmod{3}$,
we must have $(i,j)\equiv (1,2)\mbox{ or }(2,1) \pmod{3}$. This
is also sufficient since 
$x\mapsto\begin{psmallmatrix} 0 & 1 \\ 1 & 1
\end{psmallmatrix}$, 
$y\mapsto\begin{psmallmatrix} 0 & 1 \\ 0 & 0 \end{psmallmatrix}$
works for this case.
\end{proof}

\subsection{The case for the field of rational numbers}
\label{subsec:rats}

Consider a class of polynomials $f_{n}(x)\in \ints[x]$ defined by:
\begin{equation}
\label{eqn:f_n(x)}
f_{0}(x) = 2, \ \ f_{1}(x) = x \mbox{ and } f_{n+1}(x) = xf_{n}(x)-f_{n-1}(x),
\mbox { for $n\geq 1$. }
\end{equation}
By induction we easily get that
\begin{equation}
\label{eqn:xn-x-n}
x^{n}+x^{-n}=f_{n}(x+x^{-1})
\end{equation}
for every $n\in\nats$.
If $k$ is a base field $\rats$ or $\ints_p$ then through the natural
map $\ints\rightarrow k$ we can view the polynomials $f_n(x)$ in $k[x]$.

Suppose $k\neq \ints_2$ and there are $r,s\in\bar{k}$ and $n\in\nats$ such that
that $r+s\in k$, $rs\in k^{*}$ and $r^{2n}+s^{2n}=0$.
Then by (\ref{eqn:xn-x-n}) we have 
$f_{n}(r/s + s/r) = \frac{r^{2n}+s^{2n}}{(rs)^{n}} =0$ and so
$f_n(x)$ has a root $r/s + s/r\in k$. On the other hand
if $f_n(x)$ has a root $t\in k$, then by (\ref{eqn:xn-x-n})
$t\neq -2$ and we can find $r,s\in \bar{k}$ with $r+s=rs=t+2\in k$
and
\[
0 = f_n(t) = f_n\left(\frac{(r+s)^2}{rs} - 2\right)
= f_n\left(\frac{r}{s} + \frac{s}{r}\right) = \frac{r^{2n}+s^{2s}}{(rs)^n}.
\]
Hence we have the following.
\begin{lemma}
\label{lmm:f_n(x)}
For a base field $k$ there are $r,s\in\bar{k}$ such
that $r+s\in k$, $rs\in k^{*}$ and $r^{2n}+s^{2n}=0$ if and only
if $f_n(x)$ from (\ref{eqn:f_n(x)}) has a root in $k$.
\end{lemma}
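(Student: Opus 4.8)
The plan is to run everything through the Chebyshev-type identity $x^{n}+x^{-n}=f_{n}(x+x^{-1})$ of (\ref{eqn:xn-x-n}), which holds over $\ints$ and hence over any base field $k$. The bridge between the two sides of the equivalence will be the single scalar $t:=r/s+s/r$ attached to a pair $(r,s)$ with $rs\neq 0$, which I expect to be precisely a root of $f_{n}$.

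For the forward direction I would start from $r,s\in\bar{k}$ with $r+s\in k$, $rs\in k^{*}$ and $r^{2n}+s^{2n}=0$. Since $rs\neq 0$, both $r$ and $s$ are nonzero, so $t=r/s+s/r$ is well defined; rewriting $t=((r+s)^{2}-2rs)/(rs)$ shows $t\in k$, because $r+s\in k$ and $rs\in k^{*}$. Then (\ref{eqn:xn-x-n}) applied with $x=r/s$ gives $f_{n}(t)=(r/s)^{n}+(s/r)^{n}=(r^{2n}+s^{2n})/(rs)^{n}=0$, so $f_{n}$ has a root in $k$. This half needs no hypothesis on $\charac(k)$.

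For the converse I would first dispose of $k=\ints_{2}$ separately: there $r=s=1$ satisfies $r+s=0\in k$, $rs=1\in k^{*}$, $r^{2n}+s^{2n}=2=0$, while $f_{n}$ always has the root $0$ (from $f_{n}(0)=-f_{n-2}(0)$ together with $f_{1}(0)=0$ and $f_{2}(0)=-2$), so the equivalence is trivially true. Assuming $k\neq\ints_{2}$, so $\charac(k)\neq 2$, take a root $t\in k$ of $f_{n}$; evaluating (\ref{eqn:xn-x-n}) at $x=-1$ gives $f_{n}(-2)=(-1)^{n}+(-1)^{-n}=2(-1)^{n}\neq 0$, so $t\neq -2$ and $c:=t+2\in k^{*}$. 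I then let $r,s\in\bar{k}$ be the roots of $z^{2}-cz+c$, so $r+s=rs=c$; hence $r,s\neq 0$, $r+s\in k$, $rs\in k^{*}$, and $r/s+s/r=((r+s)^{2}-2rs)/(rs)=(c^{2}-2c)/c=c-2=t$. Plugging $x=r/s$ into (\ref{eqn:xn-x-n}) yields $r^{2n}+s^{2n}=f_{n}(t)(rs)^{n}=0$, as required.

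The computations here are all routine; the only things that genuinely need watching are (i) confirming that $r/s+s/r$ really lies in $k$ (not just in $\bar{k}$) in the forward direction, which is where $rs\in k^{*}$ rather than merely $rs\in\bar{k}^{*}$ gets used; (ii) checking $t\neq -2$ so that the auxiliary quadratic $z^{2}-cz+c$ produces nonzero roots summing and multiplying into $k$; and (iii) peeling off $k=\ints_{2}$, where step (ii) degenerates but the statement collapses to something trivial. I expect (iii) to be the only real \emph{surprise}, since the preceding discussion in the text was carried out under the blanket assumption $k\neq\ints_{2}$.
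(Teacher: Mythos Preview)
Your argument is correct and follows essentially the same route as the paper: both directions go through the identity $x^{n}+x^{-n}=f_{n}(x+x^{-1})$ with the substitution $x=r/s$, and for the converse both set $r+s=rs=t+2$ after noting $t\neq -2$. Your explicit treatment of $k=\ints_{2}$ is a minor addition; the paper simply prefaces its argument with ``Suppose $k\neq\ints_{2}$'' (the lemma is only applied later to $\rats$ and $\ints_{p}$ with $p\geq 3$), so your version is slightly more complete but not a different approach.
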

We can now prove the main result of this subsection.
\begin{theorem}
\label{thm:ijArats}
$(i,j,1,1)\in {\mathcal{A}}_{\rats }$ if and only if
\[
\begin{array}{lll}
(i,j) & \in & \{ (n,n)\ |\ n\not\equiv 0\pmod{4}\} \mbox{ or} \\
(i,j) & \equiv & \left\{
                       \begin{array}{ll}
                       (1,1)  \pmod{2} \\
                       (1,2),\ (2,1),\ (4,5),\ (5,4) \pmod{6}.
                       \end{array} \right.
\end{array}
\]
\end{theorem}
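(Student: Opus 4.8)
The plan is to reduce the whole question, via Lemmas~\ref{lmm:Snonzero}, \ref{lmm:insep}, \ref{lmm:rs} and~\ref{lmm:f_n(x)}, to the elementary problem of deciding which polynomials $f_{n}$ from~(\ref{eqn:f_n(x)}) have a root in $\rats$. By Lemma~\ref{lmm:Snonzero} and the discussion following it, $(i,j,1,1)\in\mathcal{A}_\rats$ if and only if there are $a,b\in\rats$ making $S(\rats;i,j,a,b)$ nonzero, and by Theorem~\ref{thm:ijji} I may assume $j\ge i$. Since $\charac(\rats)=0$, the requirement $\charac(\rats)\mid i+j$ of Lemma~\ref{lmm:insep} fails, so an inseparable quadratic $x^{2}-ax+b=(x-r)^{2}$ never yields a nonzero algebra when $(i,j)\ne(1,1)$; and $(1,1,1,1)\in\mathcal{A}_\rats$ already by Observation~\ref{obs:ij-odd-Ak}, in accordance with $1\not\equiv0\pmod4$. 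So it remains to treat the separable case, where Lemma~\ref{lmm:rs} reduces the task to deciding when there exist $r,s\in\overline{\rats}^{*}$ with $r\ne s$, $r+s\in\rats$, $rs\in\rats^{*}$, satisfying~(\ref{eqn:rs-condit}).

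First I would dispose of the diagonal $i=j$. There~(\ref{eqn:rs-condit}) collapses to $r^{i}+s^{i}=0$: for $i$ odd the pair $(1,-1)$ works (or quote Observation~\ref{obs:ij-odd-Ak}), and for $i=2N$ even the requirement is $r^{2N}+s^{2N}=0$, so by Lemma~\ref{lmm:f_n(x)} admissible $r,s$ exist exactly when $f_{N}$ has a root in $\rats$. As $f_{N}\in\ints[x]$ is monic, its rational roots are integer divisors of $f_{N}(0)$; the periodic values of $f_{N}$ at $0,\pm1,\pm2$ show $f_{N}$ has a rational root (namely $0$) if and only if $N$ is odd, i.e. $i\equiv2\pmod4$. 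Hence $(i,i,1,1)\in\mathcal{A}_\rats$ iff $i\not\equiv0\pmod4$, giving the first clause.

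For $i<j$, the relation $(rs)^{j-i}=1$ in~(\ref{eqn:rs-condit}) together with $rs\in\rats$ forces $rs\in\{1,-1\}$. If $rs=-1$ (possible only for $j-i$ even, hence $i\equiv j\pmod2$), then $r^{i+j}=-(rs)^{i}=\pm1$ has absolute value $1$, so $|r|=1$, $s=-\bar r$, and $r+s=r-\bar r$ is purely imaginary; thus $r+s\in\rats$ forces $r=\pm1$, and tracing this back forces $i,j$ both odd (already covered). If $rs=1$, then $s=r^{-1}$, $r^{i+j}=-1$, and with $q:=r+r^{-1}\in\rats$ identity~(\ref{eqn:xn-x-n}) gives $f_{i+j}(q)=r^{i+j}+r^{-(i+j)}=-2$; so $q$ is a rational root of $f_{i+j}(x)+2$ with $q\ne\pm2$ (equivalently $r\ne s$), while the side condition $r^{j-i}\ne-1$ becomes $f_{j-i}(q)\ne-2$, and conversely any such $q$ produces an admissible pair. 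Using $f_{2N}(x)+2=f_{N}(x)^{2}$ (a consequence of~(\ref{eqn:xn-x-n})), together with a factorisation of $f_{2N+1}(x)+2$ or simply the periodic values of $f_{n}$ at $0,\pm1,\pm2$, the only rational roots $q\ne\pm2$ of $f_{i+j}(x)+2$ are $q=0$ when $i+j\equiv2\pmod4$ and $q=1$ when $i+j\equiv3\pmod6$ (both automatically with $|q|<2$, so the corresponding $r$ is a complex root of unity). Finally I would push these through $f_{j-i}(q)\ne-2$: the root $q=0$ needs $j-i\not\equiv2\pmod4$, which with $i+j\equiv2\pmod4$ and $i\equiv j\pmod2$ forces $i,j$ both odd (already covered); the root $q=1$ needs $j-i\not\equiv3\pmod6$, and since $i+j\equiv3\pmod6$ makes $i,j$ of opposite parity this is equivalent to $3\mid i+j$ and $3\nmid i$, i.e. $(i,j)\equiv(1,2),(2,1),(4,5),(5,4)\pmod6$. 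Assembling all cases yields exactly the stated characterisation.

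I expect the main obstacle to be the off-diagonal analysis: forcing $|r|=1$ and disposing of the $rs=-1$ alternative cleanly, and then the bookkeeping of the $f_{n}$-root computation --- keeping careful track of the excluded roots $q=\pm2$ and of how the congruence produced by $f_{i+j}$ interacts with the one produced by $f_{j-i}$, so that no family $(i,j)$ is missed or double-counted.
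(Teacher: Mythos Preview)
Your proposal is correct and follows essentially the same route as the paper's proof: reduce via Lemmas~\ref{lmm:Snonzero}, \ref{lmm:insep}, \ref{lmm:rs}, \ref{lmm:f_n(x)} to the question of rational roots of $f_n$ (and of $f_{i+j}(x)+2$), then finish with the rational root theorem and the periodic values of $f_n$ at small integers. The only organisational difference is that you split the separable $i\neq j$ case according to $rs\in\{1,-1\}$, whereas the paper splits according to $s=-r$ versus $s=1/r$; your exclusion of $q=\pm2$ up front (from $r\neq s$) is slightly cleaner than the paper's separate treatment of $r+1/r=-2$, and your rewriting of the side condition $r^{j-i}\neq-1$ as $f_{j-i}(q)\neq-2$ is a harmless reformulation.
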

\begin{proof}
To find out if there are $a ,b\in \rats $ which will
make $S(\rats;i,j,a,b)$, nonzero we may by Lemma~\ref{lmm:insep} assume
$x^{2}-a x+b$ is separable with distinct roots
$r,s \in \overline{\rats }\subseteq \comps$. We now have two cases.

{\sc First case:} $i=j$: Here the necessary and sufficient conditions for 
$S(\rats;i,j,a,b)$ to be nonzero are, by Lemma~\ref{lmm:rs}, 
the existences of $r,s \in \comps$ such that
$r^{i}+s^{i}=0, r+s\in \rats \mbox{ and } rs\in \rats ^{*}$.
Clearly if  $i$ is odd one can let $r=1$ and $s=-1$, so assume 
$i=2n$ to be even.

By Lemma~\ref{lmm:f_n(x)} there are such $r,s\in\comps$ if and only
if $f_n(x)$ has a rational root, which is the case iff
$f_{n}(x-2)$ has a rational root. By the recursive definition of 
$f_{n}$ in (\ref{eqn:f_n(x)}), we see that $f_{n}(x-2)$
is an $n$-th order polynomial with leading 
coefficient $1$ and constant coefficient $(-1)^{n}2$. So a rational number
$a/b$ with $\gcd(a,b)=1$ is a root of $f_{n}(x-2)$ if and only if 
$b\mid 1$ and $a\mid 2$, hence the only possible rational roots of $f_{n}(x-2)$
are $\pm 1$ or $\pm 2$. The only positive
result one finds is $f_{n}(0)$ for which
\[
\mid f_{n}(0)\mid =\left\{
               \begin{array}{lll}
                0 & \mbox{ if $n$ is odd} \\
                2 & \mbox{ if $n$ is even}
               \end{array}
         \right.
\]
holds. We therefore have from this and Observation~\ref{obs:ij-odd-Ak}
that $(i,i,1,1)\in {\mathcal{A}}_{\rats }$ if and 
only if $i\not\equiv 0 \pmod{4}$.

{\sc Second case:}
$i\neq j$: Again by left-right symmetry of $R(\rats;i,j,1,1)$ we may
assume $j>i$. Here the necessary and sufficient conditions for 
$r,s\in\comps $ to fulfill are by Lemma~\ref{lmm:rs} 
\begin{eqnarray}
\label{eqn:rsj-irats} 
(rs)^{j-i}=1, r^{i+j}+(rs)^{i}=0, r^{j-i}\neq -1,\ r+s,rs\in\rats.
\end{eqnarray}
Since $rs\in\rats $ and $(rs)^{j-i}=1$ we have $rs=\pm 1$. Since
both $r$ and $s$ are here roots of unity in $\comps$, then in order for 
$r+s\in \rats $ to hold, $s$ must be either $-r$ or $1/r$.

If $s=-r$ then we get from (\ref{eqn:rsj-irats}) that 
$0=r^{2i}(r^{j-i}+(-1)^{i})$ and hence, $r^{j-i}=(-1)^{i-1}$. Since 
$r^{j-i}\neq -1$ then $i$ must be odd. Also, $1=(-1)^{j-i}r^{2(j-i)}=(-1)^{j-i}$
and so $j$ must be odd as well.
We conclude, what we already knew from Observation~\ref{obs:ij-odd-Ak},
that $(i,j,1,1)\in {\mathcal{A}}_{\rats }$ if 
both $i$ and $j$ are odd.

If $s=1/r$ then (\ref{eqn:rsj-irats}) becomes
\begin{eqnarray}
\label{eqn:r+1/r}
r^{i+j}=-1, r+1/r\in\rats ,r^{j-i}\neq -1.
\end{eqnarray}
By looking at the two first equations of 
(\ref{eqn:r+1/r}) one sees by (\ref{eqn:xn-x-n}) 
that $r+1/r\in [2,2]$ is a rational root of 
$f_{i+j}(x)+2$. On the other hand if $c\in [2,2]$ is a rational 
root of $f_{i+j}(x)+2$
then by putting $r+1/r=c$ one gets $r^{i+j}+r^{-(i+j)}=f_{i+j}(r+1/r)=-2$
and so $r^{i+j}=-1$. So the existence of an $r$ in $\comps $ satisfying the two 
first conditions of (\ref{eqn:r+1/r}) 
is equivalent to the existence of a rational
root $c\in [2,2]$ of $f_{i+j}(x)+2$.

The leading coefficient of $f_{n}(x)+2$ is $1$ and the constant
term is $2$ if $n$ is odd, so the only 
possible rational roots of $f_{n}(x)+2$ in $[2,2]$ when $n$ is odd are
$\pm 1$ or $\pm 2$.

By (\ref{eqn:xn-x-n}) one gets $f_{2n}(x)+2=f_{n}(x)^{2}$ so 
$f_{2n}(x)+2$ has a rational root if and only if $f_{n}(x)$ has one. As we saw above $n$
must be odd and $f_{n}(0)=0$ is the only possibility.

We have therefore that the only possible rational roots of 
$f_{n}(x)+2$ in $[2,2]$ are $0,\pm 1,\pm 2$ and the only positive
results we find are:
\begin{eqnarray*}
f_{n}(0)+2=0   & \Leftrightarrow & n\equiv 2\pmod{4} \\
f_{n}(1)+2=0   & \Leftrightarrow & n\equiv 3\pmod{6} \\
f_{n}(-2)+2=0  & \Leftrightarrow & n\equiv 1\pmod{2}. 
\end{eqnarray*}
We have therefore the following cases for $r+1/r$ being a root of 
$f_{i+j}(x)+2$ to consider:

$r+1/r=0$ and $i+j=4k+2$: Here we have $r^{2}=-1$, so in order
for $r^{j-i}\neq -1$ to hold we must have neither $i$ nor $j$ even and so 
we get no new information.

$r+1/r=1$ and $i+j=6k+3$: Here we have $r^{2}-r+1=0$ and hence
$r^{3}=1$, so in order for $r^{j-i}\neq -1$ to hold we must have neither
$i$ nor $j$ divisible by $3$, hence $(i,j)\equiv (1,2),(2,1),(4,5)\mbox{ 
or }(5,4)\pmod{6}$. It is on the other hand clear that for these 
values $(i,j)$ there is $r\in \comps$ satisfying (\ref{eqn:r+1/r}).

$r+1/r=-2$ and $i+j=$odd: Here we have $r=-1$ and $j-i=$odd so
$r^{j-i}=-1$. This case is impossible, and we have completed the proof.
\end{proof}

\subsection{The case for a general prime number $p\geq 3$}
\label{subsec:pgeq3}

For a prime number $p$ recall the {\em $p$-adic order} or
the {\em $p$-adic valuation} of the integers
$\nu_p : \ints \rightarrow \nats\cup\{\infty\}$ defined by
$\nu_p(n) = \max(\{\nu\in\nats : p^{\nu} | n \})$ for
$n\neq 0$ and $\nu_p(0) = \infty$. 

We now tackle the case $k=\ints _{p}$ for primes $p\geq 3$.
We start with a few useful lemmas.
\begin{lemma}
\label{lmm:Group}
If $G\subseteq k^{*}$ is a multiplicative
subgroup of a field $k$ of characteristic $\neq 2$ 
and $|G|=n$, then $G$ has an element $g$ with $g^{m}=-1$ if and only if
$\nu_2(m) + 1 \leq \nu_2(n)$.
\end{lemma}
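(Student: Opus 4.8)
The statement is really about the cyclic group $G = \langle g_0 \rangle$ of order $n$, so I would fix a generator $g_0$ and ask: for which $m$ does the equation $g^m = -1$ have a solution $g \in G$? Since $-1$ is the unique element of order $2$ in $G$ (here we use $\charac(k) \neq 2$, so $-1 \neq 1$, and $G$ being cyclic has exactly one subgroup of order $2$), we have $-1 = g_0^{n/2}$ — note $n$ must be even for $-1 \in G$ at all, and $\nu_2(n) \geq 1$ is consistent with the claimed inequality $\nu_2(m)+1 \leq \nu_2(n)$. Writing $g = g_0^{a}$, the equation $g^m = -1$ becomes $g_0^{am} = g_0^{n/2}$, i.e. $am \equiv n/2 \pmod{n}$. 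So the whole statement reduces to: \emph{the congruence $am \equiv n/2 \pmod n$ is solvable in $a$ if and only if $\nu_2(m) + 1 \leq \nu_2(n)$.}

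**Key steps.** First I would invoke the standard criterion that $am \equiv c \pmod n$ is solvable iff $\gcd(m,n) \mid c$; here $c = n/2$. So I need: $\gcd(m,n) \mid n/2$. Second, I would translate this divisibility into a $2$-adic statement. Write $d = \gcd(m,n)$. For an odd prime $q$, $\nu_q(d) = \min(\nu_q(m), \nu_q(n)) \leq \nu_q(n) = \nu_q(n/2)$, so the odd part of $d$ always divides $n/2$ automatically. Hence $d \mid n/2$ holds iff $\nu_2(d) \leq \nu_2(n/2) = \nu_2(n) - 1$, i.e. iff $\min(\nu_2(m), \nu_2(n)) \leq \nu_2(n) - 1$, i.e. iff $\nu_2(m) \leq \nu_2(n) - 1$, which is exactly $\nu_2(m) + 1 \leq \nu_2(n)$. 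That closes the loop.

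**Packaging the two directions explicitly.** For the forward direction: if $g^m = -1$ then $g$ has even order and $g^{2m} = 1$ while $g^m \neq 1$; writing the order of $g$ as $2^{e}u$ with $u$ odd, one gets $m = 2^{e-1} u' $ with $u'$ odd dividing... — actually it is cleaner to just run the congruence argument above in both directions rather than arguing abstractly about orders. For the reverse direction, given $\nu_2(m) + 1 \leq \nu_2(n)$, the criterion gives a solution $a$ to $am \equiv n/2 \pmod n$, and then $g = g_0^a$ satisfies $g^m = g_0^{n/2} = -1$, where the last equality is because $g_0^{n/2}$ is an element of order $2$ in a field, hence equals $-1$.

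**Main obstacle.** There is no deep obstacle; the one point that needs care is justifying $g_0^{n/2} = -1$, i.e. that a cyclic group of even order inside $k^*$ contains $-1$ as its unique order-$2$ element — this is where the hypothesis $\charac(k) \neq 2$ enters, ensuring $-1 \neq 1$ so that $-1$ genuinely has order $2$, and the fact that a finite subgroup of the multiplicative group of a field is cyclic (standard) so it has a unique subgroup, hence a unique element, of order $2$. The rest is the elementary number theory of linear congruences combined with the observation that odd primes are "free" in the divisibility $\gcd(m,n) \mid n/2$, isolating the condition to the $2$-adic valuations.
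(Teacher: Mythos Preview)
Your proof is correct. The paper takes a more direct element-order approach: for the forward direction it writes $m = 2^{\nu}m'$ with $m'$ odd and observes that $h = g^{m'}$ satisfies $h^{2^{\nu}} = -1$, so $h$ has order exactly $2^{\nu+1}$ in $G$, forcing $2^{\nu+1}\mid n$; for the backward direction it takes a generator $\xi$ of $G$, sets $g = \xi^{\,n/2^{\nu+1}}$, and checks that $g^{2^{\nu}} = -1$ (hence $g^{m} = -1$ since $m/2^{\nu}$ is odd). Your route instead identifies $-1$ with $g_0^{\,n/2}$ and reduces both directions simultaneously to the solvability of the single linear congruence $am \equiv n/2 \pmod{n}$, then invokes the standard criterion $\gcd(m,n)\mid n/2$ and isolates the $2$-adic content. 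Both arguments are equally elementary; yours is a bit more uniform (one computation handles both directions), while the paper's is more explicitly constructive, producing the witness element directly from the $2$-Sylow structure. One small point worth tightening: state the case $n$ odd separately (there $-1\notin G$ and both sides of the equivalence fail trivially), since the congruence $am\equiv n/2\pmod{n}$ is only well-posed for $n$ even; you allude to this but it deserves one explicit sentence before you pass to the congruence.
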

\begin{proof} 
As a subgroup of $k^*$ then $G$ and so every subgroup of $G$ is cyclic.
Let $\nu = \nu_2(m)$. If there is a $g\in G$ with $g^{m}=-1$, then 
$h^{2^{\nu}}=-1$ for some power $h$ of $g$. Since the characteristic 
is not 2 one has $h^{2^{\nu+1}}=1$ and $h^{2^{\nu}}\neq 1$. Hence $h$ has 
order $2^{\nu+1}$ in $G$ and so $2^{\nu+1}$ must divide $|G|$, the order
of $G$.

If $2^{\nu+1}$ divides $\mid G\mid =n$, say $n=2^{\nu+1}c$, then let 
$g=\xi ^{c}$ where $G=\langle\xi \rangle$. As an element of a field we have: 
$0=g^{2^{\nu+1}}-1=(g^{2^{\nu}}-1)(g^{2^{\nu}}+1)$
and hence $g^{2^{\nu}}=-1$ must hold and so $g\in G$ satisfies $g^{m}=-1$.
\end{proof}
\begin{lemma}
\label{lmm:Frobenius}
For ${c}\in \ints_p$ and $z\in\overline{\ints_p}^*$ we have
$z + \frac{{c}}{z}\in \ints_p$ if and only if
$z^{p-1} = 1$ or $z^{p+1} = {c}$.
\end{lemma}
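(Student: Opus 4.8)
The statement to prove is Lemma~\ref{lmm:Frobenius}: for $c\in\ints_p$ and $z\in\overline{\ints_p}^*$, we have $z+\frac{c}{z}\in\ints_p$ if and only if $z^{p-1}=1$ or $z^{p+1}=c$. The natural tool is the Frobenius automorphism $\sigma:\overline{\ints_p}\to\overline{\ints_p}$, $\sigma(w)=w^p$, whose fixed field is exactly $\ints_p$. The plan is to translate the membership condition $z+\frac cz\in\ints_p$ into the fixed-point equation $\sigma\!\left(z+\frac cz\right)=z+\frac cz$ and then factor.

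\medskip

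\noindent\textbf{Key steps.} First, note that $c\in\ints_p$ means $\sigma(c)=c^p=c$. Applying $\sigma$ to $z+\frac cz$ and using that $\sigma$ is a ring homomorphism gives $\sigma\!\left(z+\frac cz\right)=z^p+\frac{c}{z^p}$. Hence $z+\frac cz\in\ints_p$ if and only if
\[
z + \frac cz = z^p + \frac{c}{z^p}.
\]
Second, clear denominators by multiplying through by $z\cdot z^p$ (legitimate since $z\ne 0$), obtaining $z^{p}\cdot z + c\,z^{p} = z\cdot z^{p}\cdot z^{p-1}\cdot\ldots$ — more cleanly, rewrite the equation as $z - z^p = \frac{c}{z^p} - \frac cz = c\cdot\frac{z - z^p}{z^{p+1}}$, so
\[
(z - z^p)\left(1 - \frac{c}{z^{p+1}}\right) = 0,
\]
i.e. $(z-z^p)(z^{p+1}-c)=0$ after multiplying by $z^{p+1}$. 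Third, since $\overline{\ints_p}$ is an integral domain, one of the two factors vanishes: either $z^p=z$, which (as $z\ne0$) is equivalent to $z^{p-1}=1$, or $z^{p+1}=c$. Conversely, if $z^{p-1}=1$ then $z\in\ints_p$ and $z^{p+1}=z^2$, and if $z^{p+1}=c$ then directly $z+\frac cz = z + \frac{z^{p+1}}{z}=z+z^p=z+\sigma(z)$, which is fixed by $\sigma$ (since $\sigma^2(z)=z^{p^2}$ need not equal $z$, but $\sigma(z+z^p)=z^p+z^{p^2}$ — careful here). Actually the clean way for the converse direction is simply to run the displayed equivalences backwards: each of the two conditions forces the factored equation to hold, hence forces $\sigma\!\left(z+\frac cz\right)=z+\frac cz$, hence $z+\frac cz\in\ints_p$ because $\ints_p$ is precisely the fixed field of $\sigma$ in $\overline{\ints_p}$.

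\medskip

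\noindent\textbf{Main obstacle.} There is no deep obstacle; the only thing requiring a little care is the \emph{converse} direction, specifically making sure that satisfying one of the two factor-equations really does return us to the fixed-point condition rather than just to the ``cleared-denominator'' polynomial identity. Because we only multiplied by the nonzero quantity $z^{p+1}$, the steps are reversible, so $z^{p-1}=1$ or $z^{p+1}=c$ implies $z+\frac cz = z^p+\frac{c}{z^p}=\sigma\!\left(z+\frac cz\right)$, and then one invokes the standard fact that an element of $\overline{\ints_p}$ lies in $\ints_p$ iff it is fixed by the Frobenius $w\mapsto w^p$. I would state that fact explicitly at the start of the proof, since everything hinges on it.
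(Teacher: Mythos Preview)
Your approach is correct and essentially identical to the paper's: both use that $t\in\ints_p$ iff $t^p=t$ (Frobenius fixed field), then factor to obtain $(z^{p-1}-1)(z^{p+1}-c)=0$. The paper's proof is simply the one-line version of what you wrote; your exploratory aside on the converse (worrying about $\sigma(z+z^p)$) is unnecessary, since, as you eventually note, multiplying by the nonzero $z^{p+1}$ is reversible and the equivalence $t^p=t \Leftrightarrow t\in\ints_p$ handles both directions at once.
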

\begin{proof}
For $t = z + \frac{{c}}{z}\in \ints_p$ we have $t\in\ints_p$ if and only
if $t^p = t$, which again is equivalent to $(z^{p-1} - 1)(z^{p+1} - {c}) = 0$
\end{proof}
We can now dispatch the case when $i=j$.
\begin{theorem}
\label{thm:iiAZp}
For a prime $p\geq 3$ we have $(i,i,1,1)\in {\mathcal{A}}_{\ints _{p}}$ 
if and only if $\nu_2(p^2-1)\geq \nu_2(i)+2$.
\end{theorem}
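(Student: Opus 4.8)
The plan is to reduce $(i,i,1,1)\in\mathcal{A}_{\ints_p}$, via the discussion preceding the theorem together with Lemmas~\ref{lmm:insep} and~\ref{lmm:rs}, to an arithmetic statement about roots of unity in $\overline{\ints_p}$, and then to settle that statement with Lemma~\ref{lmm:Group}. First I would dispose of the case $i$ odd: then $(i,i,1,1)\in\mathcal{A}_{\ints_p}$ already by Observation~\ref{obs:ij-odd-Ak}, while $p\geq 3$ forces $8\mid p^2-1$, so $\nu_2(p^2-1)\geq 3>2=\nu_2(i)+2$; hence both sides of the claimed equivalence hold and we may assume $i=2n$ is even (in particular $(i,i)\neq(1,1)$).

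For $i=2n$ even, note that the inseparable alternative of Lemma~\ref{lmm:insep} cannot occur in characteristic $p\geq 3$, since it would require $r^{j-i}=r^{0}=1=-1$. Hence $(i,i,1,1)\in\mathcal{A}_{\ints_p}$ iff some separable $x^{2}-ax+b$ makes $S(\ints_p;i,i,a,b)$ nonzero, which by Lemma~\ref{lmm:rs} — specializing $(\ref{eqn:rs-condit})$ at $j=i=2n$, where the first condition is automatic and the third is automatic because $\charac(\ints_p)\neq 2$, leaving $r^{2n}+s^{2n}=0$ — means precisely that there exist $r,s\in\overline{\ints_p}^{*}$ with $r+s,rs\in\ints_p$ and $r^{2n}+s^{2n}=0$. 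By Lemma~\ref{lmm:f_n(x)} this is equivalent to $f_{n}(x)$ having a root in $\ints_p$, so it remains to pin down those $n$.

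The next step is to rewrite ``$f_{n}$ has a root $t\in\ints_p$'' as ``there is $z\in\overline{\ints_p}^{*}$ with $z^{2n}=-1$ and $z+z^{-1}\in\ints_p$.'' Indeed, any $t\in\ints_p$ is of the form $z+z^{-1}$ for some $z\in\overline{\ints_p}^{*}$ (the two roots of $w^{2}-tw+1$ are nonzero with sum $t$), and by $(\ref{eqn:xn-x-n})$ we have $f_{n}(z+z^{-1})=z^{n}+z^{-n}$, so $f_{n}(t)=0$ iff $z^{2n}=-1$; conversely such a $z$ has $z\neq-1$ (since $(-1)^{2n}=1\neq-1$), so $t:=z+z^{-1}\neq-2$ is a genuine root of $f_{n}$ lying in $\ints_p$. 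Now Lemma~\ref{lmm:Frobenius} with $c=1$ turns $z+z^{-1}\in\ints_p$ into ``$z^{p-1}=1$ or $z^{p+1}=1$'', i.e. $z$ lies in the cyclic group $\mu_{p-1}\subseteq\overline{\ints_p}^{*}$ of order $p-1$ or in the cyclic group $\mu_{p+1}\subseteq\overline{\ints_p}^{*}$ of order $p+1$. Applying Lemma~\ref{lmm:Group} to each of these groups (which lie in a field of characteristic $\neq 2$) with $m=2n$, such a $z$ exists iff $\nu_2(2n)+1\leq\nu_2(p-1)$ or $\nu_2(2n)+1\leq\nu_2(p+1)$, that is, iff $\nu_2(2n)+1\leq\max\{\nu_2(p-1),\nu_2(p+1)\}$.

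The final, purely arithmetic, step: $p$ being an odd prime, $p-1$ and $p+1$ are consecutive even integers, so exactly one of them has $2$-adic valuation equal to $1$; combined with $\nu_2(p-1)+\nu_2(p+1)=\nu_2(p^{2}-1)$ this yields $\max\{\nu_2(p-1),\nu_2(p+1)\}=\nu_2(p^{2}-1)-1$. Therefore the condition becomes $\nu_2(2n)+2\leq\nu_2(p^{2}-1)$, i.e. $\nu_2(i)+2\leq\nu_2(p^{2}-1)$, which is the asserted inequality $\nu_2(p^{2}-1)\geq\nu_2(i)+2$. I do not anticipate a real obstacle, since Lemmas~\ref{lmm:insep}, \ref{lmm:rs}, \ref{lmm:f_n(x)}, \ref{lmm:Frobenius} and \ref{lmm:Group} are all already at hand; the one point demanding care is the back-and-forth between a root of $f_{n}$ and the element $z$, in particular ruling out the degenerate value $z=-1$ (equivalently $t=-2$) in the even case.
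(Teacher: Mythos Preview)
Your argument is correct and follows essentially the same route as the paper: dispose of odd $i$, rule out the inseparable case, reduce the separable case via Lemma~\ref{lmm:rs} to the existence of an element $z\in\overline{\ints_p}^{*}$ with $z^{i}=-1$ and $z+z^{-1}\in\ints_p$, and then finish with Lemmas~\ref{lmm:Frobenius} and~\ref{lmm:Group} together with $\gcd(p-1,p+1)=2$. The only cosmetic differences are that you eliminate the inseparable case via the condition $r^{j-i}=r^{0}=1\neq-1$ (the paper uses the divisibility clause of Lemma~\ref{lmm:insep} instead), and you route through Lemma~\ref{lmm:f_n(x)} before arriving at the element $z$, whereas the paper passes directly from $(r,s)$ to $\gamma=r/s$; both detours lead to the identical endgame.
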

\begin{proof}
For an odd $i$ we have by Observation~\ref{obs:ij-odd-Ak}
that $(i,i,1,1)\in {\mathcal{A}}_{\ints _{p}}$ and since $p\geq 3$ 
we have $\nu_2(p^2-1)\geq 2 = \nu_2(i) + 2$. Hence we can for the 
remainder of the proof assume $i$ to be even.

We want to find $a ,b\in \ints _{p}$ such that 
$S(\ints_p;i,i,a,b)$ is nonzero. We have two cases.

{\sc First case:}
$x^{2}-a x+b$ is inseparable: by Lemma~\ref{lmm:insep} we see that 
if $i\neq 1$ then we must have that $p$ divides $2i$ and not $i$, 
which is impossible since $p\geq 3$. Hence $i=1$ must hold which is 
covered in the theorem.

{\sc Second case:}
$x^{2}-a x+b$ is separable: assume the two roots are $r$ and $s$.
By Lemma~\ref{lmm:rs} the necessary and sufficient conditions for 
$S(\ints_p;i,i,a,b)$ to be nonzero are the existences of 
$r,s\in \overline{\ints }_{p}$ such that:
\begin{equation}
\label{eqn:rsZp*}
r^{i}+s^{i}=0, r+s\in \ints _{p}, rs\in \ints _{p}^{*}.
\end{equation}
We will show that these conditions are equivalent to the existence of 
$\gamma \in \overline{\ints }_{p}$
such that:
\begin{equation}
\label{eqn:gammaZp}
\gamma ^{i}=-1 \mbox{ and } \gamma +\gamma ^{-1} \in \ints _{p}.
\end{equation}
That (\ref{eqn:rsZp*}) implies (\ref{eqn:gammaZp}) can clearly
be gotten by putting $\gamma =r/s$. Then $\gamma ^{i} +1 = 0$ and 
$\gamma +\gamma ^{-1} =r/s+s/r =\frac{(r+s)^{2}}{rs}-2 \in \ints _{p}$.

The other implication we get by putting $r=\gamma +1$ and
$s=\gamma ^{-1} +1$. Then we have: $r^{i}+s^{i}= 
(\gamma +1)^{i}+(\gamma ^{-1}+1)^{i} = (\gamma ^{-1}+1)^{i}(\gamma ^{i}+1)
=0$ and $r+s=rs=\gamma +\gamma ^{-1} +2\in \ints _{p}$ where $i$ is even,
$\gamma\neq -1$ and so $\gamma + \gamma^{-1} + 2 \neq 0$, and therefore
$r+s, rs\in \ints_p^{*}$.

It suffices therefore to find the conditions for $i$ and $p$ such
that there exists a $\gamma \in \overline{\ints }_{p}$ satisfying 
(\ref{eqn:gammaZp}).

By Lemma~\ref{lmm:Frobenius} $\gamma +\gamma ^{-1} \in \ints _{p}$ is 
equivalent to $\gamma$ satisfying $(\gamma ^{p-1}-1)(\gamma ^{p+1}-1) = 0$,
and so either 
$\gamma \in \ints_{p}^{*}$ or 
$\gamma \in G \subseteq \overline{\ints }_{p}^{*}$, 
where $G$ is the cyclic group of order $p+1$ formed by all the roots of 
$x^{p+1}-1 \in \ints _{p}[x]$.

In order for $\ints_{p}$ or $G$ to contain an element $\gamma $
such that $\gamma ^{i}=-1$, it is by Lemma~\ref{lmm:Group}
necessary and sufficient that either $\nu_2(p-1)\geq \nu_2(i)+1$ or
$\nu_2(p+1)\geq \nu_2(i)+1$. Since $\gcd(p-1,p+1)=2$, this is 
equivalent to $\nu_2(p^2-1)\geq \nu_2(i)+2$.
\end{proof}
We will now conclude the article by finding necessary and sufficient 
conditions for $(i,j,1,1)\in {\mathcal{A}}_{\ints_{p}}$ to hold for 
primes $p\geq 3$,
together with a couple of corollaries. 
The proof is elementary and based on ``case-chasing'' using group theory
and congruences.
\begin{theorem}
\label{thm:main-AZp}
For a prime $p\geq 3$ we have $(i,j,1,1)\in {\mathcal{A}}_{\ints _{p}}$ 
only in the following cases: 
(I) $\nu_2(j-i) > \nu_2(p-1)\geq \nu_2(i+j)$,
(II) $\nu_2(j-i) = \nu_2(p-1)\neq \nu_2(i+j)$,
(III) $\nu_2(j-i) < \nu_2(p-1)$ and $\gcd(j-i,p-1)$ is not an odd multiple
of $\gcd(j+i,p-1)$,
(IV) $\nu_2(j-i) < \nu_2(p-1)$, $p|i+j$ and $p\not|i$,
(V) $\nu_2(j - ip) < \nu_2(p+1) + \min(\nu_2(j-i),\nu_2(p-1))$ and
$j-i$ is not an odd multiple of $\gcd (j-ip,(p+1)(j-i),p^2-1)$.
\end{theorem}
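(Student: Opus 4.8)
The plan is to reduce everything to the existence of roots $r,s\in\overline{\ints_p}$ of a separable quadratic over $\ints_p$ satisfying the conditions of Lemma~\ref{lmm:rs}, then translate those conditions into $2$-adic valuation inequalities using the cyclic group structure of $\overline{\ints_p}^{*}$ together with Lemma~\ref{lmm:Group} and Lemma~\ref{lmm:Frobenius}. By Lemma~\ref{lmm:insep} the inseparable case for $(i,j)\neq(1,1)$ requires $p\mid i+j$, $p\nmid i$ and $r^{j-i}=-1$ for the double root $r\in\ints_p^{*}$; since $r\in\ints_p^{*}$ has order dividing $p-1$, the condition $r^{j-i}=-1$ is governed by Lemma~\ref{lmm:Group} with $n=p-1$. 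This is exactly case~(IV) (when it is not already subsumed by the separable analysis), so I would treat it first and set it aside.

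\textbf{Separable case.} By Lemma~\ref{lmm:rs} I need $r,s\in\overline{\ints_p}^{*}$ with $(rs)^{j-i}=1$, $r^{i+j}+(rs)^{i}=0$, $r^{j-i}\neq-1$ and $r+s,rs\in\ints_p$. First I would dispose of $rs$: since $rs\in\ints_p^{*}$ and $(rs)^{j-i}=1$, the element $rs$ lies in the subgroup of $\ints_p^{*}$ of exponent dividing $\gcd(j-i,p-1)$. Writing $d_-=\gcd(j-i,p-1)$ and $d_+=\gcd(i+j,p-1)$ I would analyze the two shapes forced by $r+s\in\ints_p$ via Lemma~\ref{lmm:Frobenius}: either $r\in\ints_p^{*}$ (so $r^{p-1}=1$) or $r^{p+1}=rs$, i.e.\ $r$ lies in (a coset related to) the order-$(p+1)$ group $G$. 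In the first subcase $r$ has order dividing $p-1$ and the binding constraint $r^{i+j}=-(rs)^{i}$ combined with $r^{j-i}\neq-1$ becomes a statement about when $-1$ is (resp.\ is not) a prescribed power inside the cyclic group $\ints_p^{*}$ — this is where Lemma~\ref{lmm:Group} produces the valuation conditions comparing $\nu_2(j-i)$, $\nu_2(i+j)$ and $\nu_2(p-1)$, giving cases~(I), (II), and~(III). In the second subcase $r^{p+1}=rs$ forces $s^{p+1}=rs$ as well after symmetrizing, $r$ and $s$ live in the order-$(p+1)$ cyclic group up to the scalar $rs$, and chasing $r^{i+j}+(rs)^i=0$ through the group of order $(p+1)(j-i)$ (or its relevant subgroup, of order $\gcd(j-ip,(p+1)(j-i),p^2-1)$) yields case~(V); the quantity $j-ip$ appears precisely because $r^{p}$ must be matched against $s$ through $r^{p+1}=rs$, so the exponent $i+j$ effectively shifts to $j-ip \pmod{(p+1)(j-i)}$.

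\textbf{The hard part} will be the bookkeeping that merges these subcases without double-counting or gaps: the same pair $(i,j,p)$ can satisfy both ``$r\in\ints_p^{*}$'' and ``$r\in G$''-type solutions, and one must verify that the union of the five stated conditions is exactly the set where at least one admissible $r$ exists. Concretely, the delicate points are (a) showing the ``odd multiple of $\gcd$'' conditions in~(III) and~(V) correctly encode ``$-1$ is \emph{not} avoidable,'' i.e.\ that every candidate $r$ satisfying the equality constraints is forced into $r^{j-i}=-1$ exactly when the gcd ratio is an odd multiple — this is a parity-of-exponent argument inside a cyclic group, leaning on Lemma~\ref{lmm:Group}; and (b) handling the boundary valuation equalities $\nu_2(j-i)=\nu_2(p-1)$ carefully, since there the torsion structures of $\ints_p^{*}$ and $G$ interact. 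I would organize the write-up as: reduce to~(\ref{eqn:rs-condit}); dispose of $rs$; split on Lemma~\ref{lmm:Frobenius}; in each branch extract the valuation/gcd condition via Lemma~\ref{lmm:Group}; and finally check sufficiency in each case by exhibiting explicit $r,s$ (hence explicit $2\times2$ matrices), mirroring the constructions already used in Lemma~\ref{lmm:rs} and Theorem~\ref{thm:iiAZp}.
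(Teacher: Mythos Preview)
Your plan is essentially the paper's own argument: reduce via Lemma~\ref{lmm:insep} (inseparable case $\Rightarrow$ (IV) with the $\nu_2(j-i)<\nu_2(p-1)$ coming from Lemma~\ref{lmm:Group}), then in the separable case use Lemma~\ref{lmm:rs}, parametrize $rs=\rho^{\beta y}$ with $\beta=(p-1)/\gcd(j-i,p-1)$, and split via Lemma~\ref{lmm:Frobenius} into $r\in\ints_p^{*}$ (yielding (I), (II), (III) by comparing $\nu_2(j-i)$ to $\nu_2(p-1)$) versus $r^{p+1}=\rho^{\beta y}$ (yielding (V), with $j-ip$ arising exactly as you say from $r^{i+j}=-r^{(p+1)i}$). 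The paper carries out the ``hard part'' you flag by rewriting the cyclic-group conditions as integer congruences and doing explicit parity bookkeeping on the parameters $\alpha,\beta,s,t$; your identification of the odd-multiple conditions in (III) and (V) as encoding ``every admissible $r$ is forced into $r^{j-i}=-1$'' is precisely what that bookkeeping establishes.
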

{\sc Remark:} Note that in the case of $i=j$ we have $\nu_2(j-i) =\infty$
and so cases (II), (III) and (IV) do not occur. Case (I) reduces then
to $\nu_2(p-1)\geq \nu_2(i) + 1$ and case (V) to 
$\nu_2(p+1)\geq \nu_2(i) + 1$. We therefore see that 
Theorem~\ref{thm:iiAZp} is covered by the above Theorem~\ref{thm:main-AZp}.
\begin{proof}[Theorem~\ref{thm:main-AZp}]
By Theorem~\ref{thm:iiAZp} and by left-right symmetry of $R(k;i,j,1,1)$ 
we can assume $j>i$. 

{\sc Convention:} For convenience in writing this proof we
define $d=\gcd(j-i,p-1)$ and $e=\gcd(j+i,p-1)$.
Also, for the remainder of this proof we will be using
$x,y$ and $z$ for integer variables in congruence equations and not
as generators of our noncommutative algebra $R(A;i,j,m,n)$.

To find out whether there are $a ,b\in\ints _{p}$ 
with $S(\ints_p;i,j,a,b)$ nonzero, we have, as in the proof
of Theorem~\ref{thm:iiAZp}, two cases:

{\sc First case:} 
$x^{2}-a x+b$ is inseparable with a double root $r\in \ints _{p}$.
If $r=0$ we must have $i=j=1$ which is not the case here, so $r\neq 0$. In
this case by Lemma~\ref{lmm:insep} we must have $p$ divide $i+j$ but not 
$i$ and $r^{j-i}=-1$ which by Lemma~\ref{lmm:Group} can only occur 
when $\nu_2(j-i) <\nu_2(p-1)$ so we have gotten case $(IV)$ in the theorem.

{\sc Second case:}
$x^{2}-a x+b$ is separable with roots $r,s \in \overline{\ints }_{p}$.
By Lemma~\ref{lmm:rs} $r$ and $s$ must satisfy the following conditions:
\begin{equation}
\label{eqn:rsj-i}
(rs)^{j-i}=1, \ \ r^{i+j}+(rs)^{i}=0, \ \ r^{j-i}\neq-1, \ \ r+s,rs\in\ints_{p}. 
\end{equation}
Since $rs\in \ints _{p}$ here we have $(rs)^{p-1}=1$ and so if 
$d=\gcd(j-i,p-1)$, then $rs$ must be some power of a primitive $d$-th root 
of unity in $\ints _{p}$, that is $rs=\rho ^{{\beta}y}$ where 
${\beta}=\frac{p-1}{d}$, $y$ is 
some integer in $\ints $ and $\langle\rho \rangle=\ints _{p}^{*}$. So 
the conditions (\ref{eqn:rsj-i}) become
$r^{i+j}=-\rho ^{{\beta}yi}$, $r^{j-i}\neq -1$,
$r+\frac{\rho ^{y{\beta}}}{r}\in \ints _{p}$.
By Lemma~\ref{lmm:Frobenius} $r+\frac{\rho ^{y{\beta}}}{r}\in \ints _{p}$
is equivalent to $(r^{p-1}-1)(r^{p+1}-\rho ^{y{\beta}}) = 0$, so either 
$r\in \ints _{p}$, or $r\in \overline{\ints }_{p}$ satisfying 
$r^{p+1}=\rho ^{y{\beta}}$ where $y\in\ints$ and so we have two sub-cases.

{\sc First sub-case:}
$r\in \ints _{p}$. Since $r\neq 0$ we 
have that $r=\rho ^{x}$ for some $x\in \ints$. So here we get the conditions
\[
\rho ^{x(i+j)}=-\rho ^{{\beta}yi}=\rho^{\frac{p-1}{2}+{\beta}yi} \ \mbox{ and }\ 
\rho ^{x(j-i)}\neq \rho ^{\frac{p-1}{2}}.
\]
Since $\rho $ is the generator of $\ints _{p}^{*}$ it has period 
$p-1$ so we get the conditions for $x,y\in\ints$:
\begin{eqnarray*}
x(i+j) & \equiv & \frac{p-1}{2}+{\beta}yi \pmod{p-1} \\
x(j-i) & \not\equiv & \frac{p-1}{2} \pmod{p-1} 
\end{eqnarray*}
which is the same as whether one can find $x,y,z \in \ints $ such that:
\begin{eqnarray*}
2x(i+j) - 2{\beta}yi & = & (2z+1)(p-1) \\
2x(j-i)       &\neq & (\mbox{odd\# })(p-1).
\end{eqnarray*}
Since $p-1={\beta}d$, let $j-i={\alpha}d$. Then $\gcd({\alpha},{\beta})=1$ and 
$2{\beta}i={\beta}(i+j)-{\alpha}(p-1)$ and so we get:
\begin{equation}
\label{eqn:ijxny}
(i+j)(2x-{\beta}y)=(2z-{\alpha}y+1)(p-1), \ \ 2x(j-i)\neq(\mbox{odd\# })(p-1).
\end{equation}
If $\nu_2(j-i) =\nu_2(p-1)$ then by definition, both ${\alpha}$ and ${\beta}$ 
are odd, say ${\alpha}=2{\alpha}'-1$ and ${\beta}=2{\beta}'-1$. Let us now 
change the variables $x$ and $z$ to $x'=x-{\beta}'y$ and $z'=z-{\alpha}'y$. 
Now the first equation of (\ref{eqn:ijxny}) becomes:
\[
(i+j)(2x'+y)=(2z'+y+1)(p-1)
\]
which clearly has a solution $x',y,z'\in \ints$ and hence also 
$x,y,z \in \ints $ in this case, if and
only if $\nu_2(i+j) \neq \nu_2(p-1) $, because $2x'+y$
and $2z'+y+1$ have distinct parity. When
$\nu_2(j-i) =\nu_2(p-1) \neq \nu_2(i+j)$ then 
$2x(j-i)\neq (\mbox{odd\#})(p-1)$ so we have gotten the case $(II)$ 
in the theorem.

If $\nu_2(j-i) >\nu_2(p-1) $ then ${\alpha}$ is even and ${\beta}$ is odd, 
say ${\alpha}=2{\alpha}'$ and ${\beta}=2{\beta}'-1$. Again let 
$x'=x-{\beta}'y$ and $z'=z-{\alpha}'y$. Here the first equation of 
(\ref{eqn:ijxny}) becomes:
\[
(i+j)(2x'+y)=(2z'+1)(p-1)
\]
which clearly has solutions $x',y,z'\in \ints$ and hence also 
$x,y,z \in \ints $ if and only if $\nu_2(p-1) \geq \nu_2(i+j) $. In this case
$2x(j-i)$ is never an odd multiple of $(p-1)$, so we have gotten here case 
$(I)$ in the theorem.

If $\nu_2(j-i) <\nu_2(p-1) $ then ${\alpha}$ is odd and ${\beta}$ is even, 
say ${\alpha}=2{\alpha}'-1$ and ${\beta}=2{\beta}'$. Let $x',z'$ be as 
before and we get from (\ref{eqn:ijxny}) that 
\begin{equation}
\label{eqn:s-odd-mult}
(i+j)(2x')=(2z'+y+1)(p-1).
\end{equation}
Here we clearly can always find solutions $x',y,z'\in \ints $
to (\ref{eqn:s-odd-mult}) and hence also corresponding
solutions in $x,y,z\in \ints$. Assume now that for every such solution 
$x,y,z\in \ints$ we have $2x(j-i)=(\mbox{odd\# })(p-1)$. We first notice that 
this condition is equivalent to $x=(\mbox{odd\# }){\beta}'$, so we are here 
assuming that every solution $x,y,z\in \ints $ to 
$(i+j)(2x-{\beta}y)=(2z-{\alpha}y+1)(p-1)$ has
$x=(\mbox{odd\# }){\beta}'$. Let us write $i+j=et$ and $p-1=se$ where 
$e=\gcd(i+j,p-1)$.
Then since $\gcd(s,t)=1$ every solution $x,y,z$ must satisfy:
\begin{eqnarray*}
2x-{\beta}y & = & {\ell}s \\
2z-{\alpha}y+1 & = & {\ell}t 
\end{eqnarray*}
for some ${\ell}\in \ints $. Since $x'=x-{\beta}'y$ and 
$z'=z-{\alpha}'y$, then we assume that every solution $x',y,z'$ to 
\begin{eqnarray*}
2x' & = & {\ell}s \\
2z'+y+1 & = & {\ell}t 
\end{eqnarray*}
has $x'=(\mbox{odd\# } - y){\beta}'$, i.e. whenever $2x'={\ell}s$ then 
$x'=(\mbox{odd\# } - ({\ell}t-2z'-1)){\beta}'=(2c_{{\ell}}-{\ell}t){\beta}'$ 
for some $c_{{\ell}}\in \ints$. So in particular when $x'=s$ (i.e. ${\ell}=2$) 
we have $x'=(2c_{2}-2t){\beta}'=(c_{2}-t){\beta}$ and hence $s$ is even, 
which means $t$ is odd since $\gcd(s,t)=1$. We therefore have a solution
$2x'=s$ (i.e. ${\ell}=1$) and from this we get
$s=(2c_{1}-t){\beta}=(\mbox{odd\# }){\beta}$, an odd multiple of $\beta$.

Suppose now that $s$ is an odd multiple of ${\beta}$,
say $s = (2w+1){\beta}$ for some $w\in\ints$, then 
$x'={\ell}(2w+1){\beta}'$ and 
$y = {\ell}t-(2z'+1)$. Since $t$ is odd, ${\ell}(2w+1+t) - 2z'$ is
even and so ${\ell}(2w+1) + y$ is odd, say $2w'+1$, and so
$x' = {\ell}(2w+1){\beta}' = (2w'+1 - y){\beta}'$. In summary, if
$s$ is an odd multiple of ${\beta}$, then every solution
$x',y,z'$ to (\ref{eqn:s-odd-mult}) has $x'=(\mbox{odd\# }-y){\beta}'$.

We have therefore in the case $\nu_2(j-i)  < \nu_2(p-1) $ that 
$s$ not being an odd multiple of ${\beta}$ is the necessary and sufficient 
condition, and since this is equivalent to $d\neq (\mbox{odd\# })e$ we 
get case $(III)$ in the theorem.

{\sc Second sub-case:}
$r^{p+1}=\rho ^{y{\beta}}$, where $y\in \ints $.
By (\ref{eqn:rsj-i}) we want to find the conditions for the existence of 
$r\in \overline{\ints }_{p}$ such that:
\begin{equation}
\label{eqn:rrho}
r^{i+j}=-\rho ^{{\beta}yi} , r^{p+1}=\rho ^{{\beta}y} ,r^{j-i} \neq -1.
\end{equation}
Since $r\neq 0$ and the cyclic group $\ints_p^*$ of order $p-1$ is
generated by $\rho$, so $\ints_p^* = \langle\rho\rangle$, then 
(\ref{eqn:rrho}) is equivalent to 
\[
r^{j-ip}=-1 , r^{(p+1)d}=1 , r^{j-i}\neq-1.
\]
The condition $r^{(p+1)d}=1$ is equivalent to $r\in 
G\subseteq \overline{\ints }_{p}^{*}$ where $G$ is the finite cyclic 
group of order
$(p+1)d$ formed by all the roots of $x^{(p+1)d}-1$. By lemma~\ref{lmm:Group} 
we must have $\nu_2((p+1)d)>\nu_2(j-ip)$. 

Let us now assume that each solution  of $r^{(p+1)d}=1 , r^{j-ip}=-1$
satisfies $r^{j-i}=-1$. Let $\xi $ be the generator of $G$. Since $\xi$ is an 
element of $\overline{\ints }_{p}$, we have 
$0=\xi ^{(p+1)d}-1=\left( \xi^{\left( \frac{p+1}{2}\right) d} -1\right) \left( \xi
^{\left( \frac{p+1}{2}\right) d} +1\right)$ 
so $\xi ^{\left(\frac{p+1}{2}\right)d} =-1$. Since now $r$ is a power of 
$\xi $ we are in fact assuming that every $x$ satisfying 
$x(j-ip)\equiv \left( \frac{p+1}{2}\right)d \pmod{(p+1)d}$ also 
satisfies
$x(j-i) \equiv \left( \frac{p+1}{2}\right)d \pmod{(p+1)d}$. This is 
equivalent to assuming that every solution $x,y\in \ints$ to 
\begin{equation}
\label{eqn:2x2y+1}
2x(j-ip)=(2y+1)(p+1)d
\end{equation}
has $2x(j-i)$ as an odd multiple of $(p+1)d$.

Since $\nu_2((p+1)d)>\nu_2(j-ip)$ we have 
$q=\gcd\left( j-ip,\frac{p+1}{2}d\right) =\gcd(j-ip,(p+1)d)$. 
Write $j-ip=qu$ and $\frac{p+1}{2}d=qv$. Then every
solution to (\ref{eqn:2x2y+1}) must be:
\begin{eqnarray*}
x & = & {\ell}v \\
2y + 1 & = & {\ell}u 
\end{eqnarray*}
where ${\ell}\in \ints$. Clearly ${\ell}$ can only be odd. We have in 
particular for the solution $x=v$ and $y=\frac{u-1}{2}$ (i.e. ${\ell}=1$) 
that $2x(j-i)=(\mbox{odd\# })2qv$; that is to say $j-i=(\mbox{odd\# })q$.

On the other hand, one can easily see that if 
$j-i=(\mbox{odd\# })q$ then every solution 
to (\ref{eqn:2x2y+1}) satisfies $2x(j-i)=(\mbox{odd\# })(p+1)d$. We have 
therefore finally that one can find $r\in \overline{\ints }_{P}$ satisfying 
(\ref{eqn:rrho}) if and only if $\nu_2((p+1)d)>\nu_2(j-ip)$ and 
$j-i\neq (\mbox{odd\# })\gcd(j-ip,(p+1)d)$. Since
$\nu_2((p+1)d)>\nu_2(j-ip) = \nu_2(p+1) = \min(\nu_2(j-i),\nu_2(p-1))$
we finally have case $(V)$ in the theorem.
\end{proof}
{\sc Remarks:} (i) The case that $i$ and $j$ are odd numbers is 
included in the theorem since $i,j$ both odd is the same as saying
``Exactly one of the numbers $\nu_2(i+j) ,\nu_2(j-i) \in \nats $ 
is equal to 1 and the other is 2 or greater.''
(ii) The case $(V)$ in the theorem doesn't look symmetrical in $i,j$,
but it is since $j-ip\equiv i-jp \pmod{(p+1)d}$ means that 
$\nu_2((p+1)d)>\nu_2(j-ip)$ if and only if
$\nu_2((p+1)d)>\nu_2(i-jp)$. 

We conclude the article by two corollaries of Theorem~\ref{thm:main-AZp}
for particular primes $p$. Since $\nu_2(p-1) = 1$ is equivalent
to $p\equiv -1 \pmod{4}$, we have by Theorem~\ref{thm:main-AZp} and
the Chinese Remainder Theorem the following corollary.
\begin{corollary}
\label{cor:p=-1mod4}
For a prime $p$ of the form $p=2^a(2b+1)-1$ where $a\geq 2$, in particular
for each Mersenne prime of the form $p=2^q-1$ for a prime $q$, 
we have $(i,j,1,1)\in {\mathcal{A}}_{\ints_{p}}$ 
only in the following cases:
(I) $i$ and $j$ are both odd,
(II) $i$ and $j$ have distinct parity and 
$\gcd(j-i,p-1)$ is not an odd multiple of $\gcd(j+i,p-1)$,
(III) 
\[
(i,j)\equiv (\ell(p+1), (p+1)(p - \ell) + p),\ 
((p+1)(p - \ell) + p, \ell(p+1)) \pmod{2p}, \mbox{ for } 
\ell\in \{1,\ldots, p-1\},
\]
(IV) $\nu_2(j - ip) \leq a$ and 
$j-i$ is not an odd multiple of $\gcd (j-ip,(p+1)(j-i),p^2-1)$.
\end{corollary}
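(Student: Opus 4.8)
The plan is to obtain Corollary~\ref{cor:p=-1mod4} as a direct specialization of Theorem~\ref{thm:main-AZp}. First I would translate the hypothesis on $p$: writing $p = 2^a(2b+1)-1$ with $a\geq 2$ is the same as $\nu_2(p+1) = a$ with $a\geq 2$, and since $p-1$ and $p+1$ are consecutive even integers, exactly one of them has $2$-adic valuation $1$; thus the hypothesis is equivalent to $\nu_2(p-1) = 1$ together with $\nu_2(p+1) = a$. A Mersenne prime $p = 2^q-1$ with $q$ prime is the special case $2b+1 = 1$, $a = q \geq 2$, so it suffices to prove the statement for every prime $p$ with $\nu_2(p-1) = 1$. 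From then on I would substitute $\nu_2(p-1) = 1$ (and $\nu_2(p+1) = a$) into each of the five cases (I)--(V) of Theorem~\ref{thm:main-AZp} and simplify.

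Since $i+j$ and $j-i$ always have the same parity, case (I) of the theorem becomes ``$\nu_2(i+j) = 1$ and $\nu_2(j-i)\geq 2$'' and case (II) becomes ``$\nu_2(j-i) = 1$ and $\nu_2(i+j)\geq 2$''; by the remark following Theorem~\ref{thm:main-AZp} the union of these two conditions is exactly ``$i$ and $j$ are both odd,'' which is case (I) of the corollary (and is in any case immediate from Observation~\ref{obs:ij-odd-Ak}). In cases (III) and (IV) of the theorem the requirement $\nu_2(j-i) < \nu_2(p-1) = 1$ forces $\nu_2(j-i) = 0$, i.e.\ $i$ and $j$ of opposite parity. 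With this, case (III) of the theorem is literally case (II) of the corollary, while case (IV) of the theorem reads ``$i,j$ of opposite parity, $p\mid i+j$ and $p\nmid i$.''

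The one computational step is rewriting this last condition as the explicit congruences in case (III) of the corollary. Since $p$ is odd, $\gcd(2,p) = 1$, so by the Chinese Remainder Theorem a residue modulo $2p$ is the same datum as a residue modulo $2$ together with a residue modulo $p$; I would verify that $\ell(p+1)$ is the unique residue modulo $2p$ that is even and $\equiv \ell\pmod p$, and that $(p+1)(p-\ell)+p$ is the unique residue modulo $2p$ that is odd and $\equiv -\ell\pmod p$. Letting $\ell$ run over $\{1,\dots,p-1\}$ (this is precisely the condition $p\nmid i$) and allowing $i$ and $j$ to be interchanged (the symmetry being legitimate by the left-right symmetry of $R(\ints_p;i,j,1,1)$, which also underlies the ``$j>i$'' reduction in the proof of Theorem~\ref{thm:main-AZp}) then reproduces case (III) of the corollary exactly.

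Finally, in case (V) of the theorem I would use $\nu_2(p+1) = a$ and $\min(\nu_2(j-i),\nu_2(p-1)) = \min(\nu_2(j-i),1)$. When $i,j$ have opposite parity, $\nu_2(j-ip) = 0$, so the inequality $\nu_2(j-ip) < a + \min(\nu_2(j-i),1)$ holds automatically; when $i,j$ have the same parity, $\min(\nu_2(j-i),1) = 1$ and the inequality becomes $\nu_2(j-ip) < a+1$, i.e.\ $\nu_2(j-ip)\leq a$. Together with the unchanged condition ``$j-i$ is not an odd multiple of $\gcd(j-ip,(p+1)(j-i),p^2-1)$'' this is case (IV) of the corollary; here one also uses $\gcd\bigl((p+1)(j-i),p^2-1\bigr) = (p+1)\gcd(j-i,p-1)$, so that this gcd coincides with the one that actually appears in the proof of Theorem~\ref{thm:main-AZp}. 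The main obstacle is really only bookkeeping: one must check that the Chinese Remainder repackaging in the step (IV)$\,\to\,$(III) neither drops nor double-counts residue classes, and that the gap between the strict inequality in Theorem~\ref{thm:main-AZp}(V) and the non-strict ``$\leq a$'' of the corollary is fully absorbed by the $\min$ term according to the parity of $i$ and $j$ (in particular one should note that when $i,j$ are both even only case (V)/(IV) can occur, as a consistency check).
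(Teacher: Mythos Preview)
Your proposal is correct and follows exactly the paper's approach: the paper's entire proof is the single line ``Since $\nu_2(p-1) = 1$ is equivalent to $p\equiv -1 \pmod{4}$, we have by Theorem~\ref{thm:main-AZp} and the Chinese Remainder Theorem the following corollary,'' and what you have written is precisely a careful unpacking of that sentence---substituting $\nu_2(p-1)=1$, $\nu_2(p+1)=a$ into each of the five cases of Theorem~\ref{thm:main-AZp} and invoking CRT for the (IV)$\to$(III) translation. Your bookkeeping (parity analysis, CRT verification, handling of the strict versus non-strict inequality in case~(V)) is in fact more detailed than anything the paper records.
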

Further, for $p=3$ we note that the second condition (II) in the 
above Corollary~\ref{cor:p=-1mod4} cannot occur, and 
since $p^2-1=8$, a pure power of $2$, the last condition (IV) 
becomes ``$\nu_2(j - 3i) \leq 2$ and 
$j-i$ is not an odd multiple of $\gcd (j-3i,4(j-i),8)$.'' We now briefly
translate this condition by considering two cases: 
(a) if $\nu_2(i)\neq\nu_2(j)$, then 
$i = 2^{\nu}i'$ and $j = 2^{\nu}j'$ where $i'$ and $j'$ have distinct parity
and $\nu_2(j-i) = \nu_2(j-3i) = \nu\leq 2$. Since both $j'-i'$ and $j'-3i'$
are odd and $\gcd(j'-3i',4(j'-i'),2^{3-\nu}) = 1$, then $j-i = 2^{\nu}(j'-i')$
is always an odd multiple of 
$2^{\nu}\gcd(j'-3i',4(j'-i'),2^{3-\nu}) = \gcd(j-3i,4(j-i),8)$.
(b) If $\nu_2(i) = \nu_2(j) = \nu$, then $i = 2^{\nu}(2i'+1)$
and $j = 2^{\nu}(2j'+1)$, and hence $j-i = 2^{\nu+1}(j'-i')$
and $j-3i = 2^{\nu+1}(j'-3i'-1)$. By the first condition (I) 
we can assume $\nu \geq 1$, and since $\nu + \nu_2(j'-3i'-1)\leq 1$
we can assume $\nu = 1$ and $j'-3i'-1$ to be odd.
We note that $\gcd(j-3i,4(j-i),8) = 2^{\nu+1}\gcd(j'-3i'-1,4(j'-i'),2^{2-\nu})
=2^{\nu+1}\gcd(j'-3i'-1,2^{2-\nu})$. Since $j'-i'$ and $j'-3i'-1$ have
distinct parity, $j'-i'$ is never an odd multiple of 
$\gcd(j'-3i'-1,2^{2-\nu}) = 1$. We therefore 
have from Corollary~\ref{cor:p=-1mod4} the following. 
\begin{corollary}
\label{cor:p=3}
$(i,j,1,1)\in {\mathcal{A}}_{\ints _{3}}$ if and only if 
\[
(i,j)\equiv \left\{ 
\begin{array}{ll}
(1,1) \pmod{2}, \\
(1,2),\ (2,1),\ (4,5),\ (5,4) \pmod{6}, \\
(2,2),\ (6,6) \pmod{8}.
\end{array} \right.
\]
\end{corollary}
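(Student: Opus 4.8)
The plan is to deduce this from Corollary~\ref{cor:p=-1mod4} applied to the prime $p = 3$, which has the required form $p = 2^a(2b+1) - 1$ with $a = 2$ and $b = 0$; for this prime $p + 1 = 4$, $p - 1 = 2$, $p^2 - 1 = 8$ and $2p = 6$. I will reduce each of the four clauses of that corollary to one of the congruence conditions in the statement. Clause (I), ``$i$ and $j$ both odd'', is literally $(i,j) \equiv (1,1) \pmod 2$. Clause (II) is vacuous here: when $i$ and $j$ have distinct parity, $j - i$ and $j + i$ are both odd, so $\gcd(j-i,p-1) = \gcd(j+i,p-1) = 1$, and $1$ is an odd multiple of $1$. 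For clause (III) I substitute $p = 3$ and let $\ell$ range over $\{1,2\}$: the pair $(\ell(p+1),(p+1)(p-\ell)+p)$ becomes $(4,11) \equiv (4,5)$ and $(8,7) \equiv (2,1)$ modulo $6$, and the transposed pairs give $(5,4)$ and $(1,2)$, so clause (III) contributes exactly $(i,j) \equiv (1,2),(2,1),(4,5),(5,4) \pmod 6$.

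The substantive step is clause (IV), which for $p = 3$ reads: $\nu_2(j - 3i) \le 2$ and $j - i$ is not an odd multiple of $g := \gcd(j-3i,\,4(j-i),\,8)$. I would split on whether $\nu_2(i) = \nu_2(j)$. If $\nu_2(i) \ne \nu_2(j)$, set $\nu = \min(\nu_2(i),\nu_2(j))$ and write $i = 2^\nu i'$, $j = 2^\nu j'$ with $i',j'$ of opposite parity; then $j'-i'$ and $j'-3i'$ are odd, so $\nu_2(j-3i) = \nu$ and $g = 2^\nu$, while $j - i = 2^\nu(j'-i')$ with $j'-i'$ odd, so $j - i$ \emph{is} an odd multiple of $g$ and clause (IV) is never met. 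If $\nu_2(i) = \nu_2(j) = \nu$, write $i = 2^\nu(2i'+1)$, $j = 2^\nu(2j'+1)$, whence $j - i = 2^{\nu+1}(j'-i')$ and $j - 3i = 2^{\nu+1}(j'-3i'-1)$; the subcase $\nu = 0$ already lies in clause (I), so assume $\nu \ge 1$. Then $\nu_2(j-3i) = \nu + 1 + \nu_2(j'-3i'-1) \le 2$ forces $\nu = 1$ and $j'-3i'-1$ odd, and since $j'-3i'-1$ and $j'-i'$ differ by the odd integer $2i'+1$, this is equivalent to $j'-i'$ being even, i.e.\ $i'$ and $j'$ having the same parity. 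In that case $g = 4$ and $j - i = 4(j'-i')$ is divisible by $8$, hence \emph{not} an odd multiple of $g$, so clause (IV) does hold. Reading off residues modulo $8$: $\nu = 1$ with $i' \equiv j' \equiv 0 \pmod 2$ gives $i \equiv j \equiv 2 \pmod 8$, and $i' \equiv j' \equiv 1 \pmod 2$ gives $i \equiv j \equiv 6 \pmod 8$; conversely each such pair has $\nu = 1$ with $i',j'$ of equal parity. Thus clause (IV) amounts to $(i,j) \equiv (2,2),(6,6) \pmod 8$.

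Taking the union of the reduced clauses (I), (III) and (IV) produces exactly the three-line list in the statement, and both directions of the ``if and only if'' are inherited from the corresponding equivalence in Corollary~\ref{cor:p=-1mod4}. I expect clause (IV) to be the only place demanding genuine care — the $2$-adic bookkeeping through the two substitutions, the evaluation of $g$ in each subcase, and the translation of the residue of $2i'+1$ modulo $4$ into the residues $2$ and $6$ modulo $8$; the remaining clauses are a direct substitution or an observation already recorded in the paragraph preceding the statement.
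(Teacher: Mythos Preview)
Your proposal is correct and follows essentially the same approach as the paper: the paragraph preceding the corollary specializes Corollary~\ref{cor:p=-1mod4} to $p=3$, observes that clause (II) is vacuous, and analyzes clause (IV) via the same split on whether $\nu_2(i)=\nu_2(j)$, with the same substitutions $i=2^\nu i'$, $j=2^\nu j'$ (respectively $i=2^\nu(2i'+1)$, $j=2^\nu(2j'+1)$). Your treatment is in fact slightly more explicit than the paper's, which leaves clauses (I) and (III) and the final passage from ``$\nu=1$ with $i',j'$ of equal parity'' to the residues $(2,2),(6,6)\pmod 8$ to the reader.
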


\subsection*{Acknowledgments}  

Sincere thanks to the anonymous referee for spotting some
well-hidden typos and for improving the exposition of this article.

\bibliographystyle{amsalpha}
\bibliography{Matrix-bib}

\flushright{\today}

\end{document}